\newtheorem{theorem}{Theorem}
\newtheorem*{theorem*}{Theorem}
\newtheorem{proposition}{Proposition}
\newtheorem{conjecture}{Conjecture}
\newtheorem{corollary}{Corollary}
\newtheorem{lemma}{Lemma}
\theoremstyle{remark}
\newtheorem{remark}{Remark}
\theoremstyle{definition}
\newtheorem{definition}{Definition}
\newcommand{\E}{\mathcal{E}}
\newcommand{\T}{\mathcal{T}}
\newcommand{\C}{\mathcal{C}}
\newcommand{\D}{\mathcal{D}}
\title{Loci of Poncelet Triangles\\in the General Closure Case}
\author{Ronaldo Garcia} 
\thanks{R. Garcia, Math.\&Stats. Inst., Federal Univ. of Goiás, Goiânia, Brazil. \texttt{ragarcia@ufg.br}}
\author{Boris Odehnal}
\thanks{B. Odehnal, Geom. Dept., Univ. Applied Arts, Vienna, Austria.
\texttt{boris.odehnal@uni-ak.ac.at}}
\author{Dan Reznik$^*$}
\thanks{D. Reznik$^*$, Data Science Consulting Ltd., Rio de Janeiro, Brazil. \texttt{dreznik@gmail.com}}
\begin{document}

\maketitle
\begin{abstract}
We analyze loci of triangle centers over variants of two-well known triangle porisms: the bicentric and confocal families. Specifically, we evoke the general version of Poncelet's closure theorem whereby individual sides can be made tangent to separate in-pencil caustics. We show that despite the more complicated dynamic geometry, the locus of certain triangle centers and associated points remain conics and/or circles. 
\end{abstract}

\vskip .3cm
\noindent\textbf{Keywords} locus, Poncelet, porism, closure, ellipse.
\vskip .3cm
\noindent \textbf{MSC 2010} {51M04, 51N20, 51N35, 68T20}

\section{Introduction}
\cref{fig:poncelet}(left) shows what is actually a special case of Poncelet's porism: if an $N$-gon can be found with all vertices on a first conic and all sides tangent to a second one (the ``caustic''), then a one-parameter family of said $N$-gons exists, e.g., with vertex on any point on the first conic \cite{dragovic11}. \cref{fig:poncelet}(right) illustrates the general case, called ``Poncelet's Closure Theorem'' (PCT), contemplating a porism with multiple caustics. It can be stated as follows \cite{centina15}: 

\begin{figure}
    \centering
    \includegraphics[trim=0 75 0 115,clip,width=\textwidth]{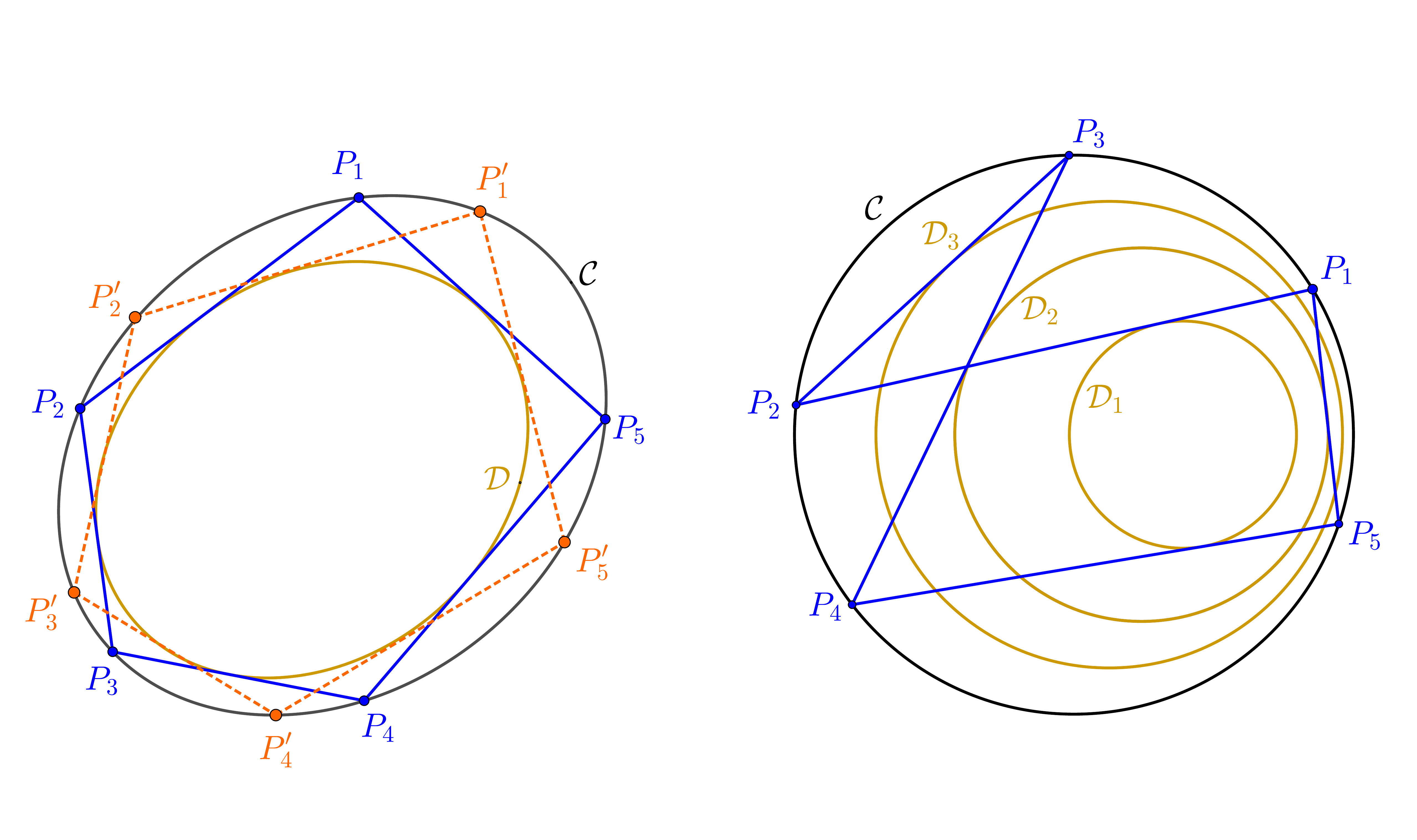}
    \caption{\textbf{Left:} A Poncelet porism of 5-gons interscribed between two ellipses $\C$ and $\D$. \href{https://youtu.be/kzxf7ZgJ5Hw}{Video}. \textbf{Right:} Poncelet's Closure Theorem (PCT) \cite{centina15} contemplates a porism of polygons inscribed in an outer conic $\C$ with sides tangent to one or more in-pencil conics $\D_i$. Shown here is a pencil of coaxial circles. \href{https://youtu.be/L5A_S4VQLiw}{Video}}
    \label{fig:poncelet}
\end{figure}

\begin{theorem}[PCT]
Let $\C$ and $\D_i$, $i=1,\ldots,M$ be $M+1$ distinct conics in the same linear pencil. If an $N$-gon can be constructed with all vertices on $\C$ such that each side is tangent to some $\D_i$, a porism exists of such $N$-gons.
\end{theorem}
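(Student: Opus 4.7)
The plan is to realize, for each caustic $\D_i$ in the pencil, the Poncelet step ``traverse a side tangent to $\D_i$'' as a translation on a single elliptic curve $E$ canonically attached to the pencil, and then invoke the fact that any translation on an elliptic curve with a fixed point is the identity.

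First I construct the common elliptic curve. Because $\C$ and all the $\D_i$ lie in the same linear pencil, they share the four base points $B$; in particular $\C \cap \D_i = B$ for every $i$. Let $E$ be the double cover of $\C$ branched exactly over $B$. Since $\C \cong \mathbb{P}^1$ and $|B| = 4$, Riemann--Hurwitz gives $g(E) = 1$, so $E$ is an elliptic curve after fixing an origin. For each $i$ the incidence curve
\[
E_i = \bigl\{(p, \ell) : p \in \C,\ p \in \ell,\ \ell \text{ tangent to } \D_i\bigr\},
\]
viewed via the projection $(p, \ell) \mapsto p$, is itself a double cover of $\C$, branched exactly where the two tangents from $p$ to $\D_i$ coincide, namely at $\C \cap \D_i = B$. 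Hence $E_i \cong E$ as branched covers of $\C$, and this is the crucial point that lets all caustics be treated on the same footing.

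Next I decompose each Poncelet step into two involutions on $E$. Under the identification $E_i \cong E$ the Poncelet map $T_i$ becomes $\tau_i \circ \sigma$, where $\sigma$ is the deck (hyperelliptic) involution of $E \to \C$, which is the \emph{same} for every $i$, and $\tau_i$ sends $(p, \ell)$ to $(p', \ell)$ with $p'$ the second intersection of $\ell$ with $\C$. An involution of an elliptic curve has the form $z \mapsto -z + c$ in group coordinates, and composing two such maps always yields a translation. Hence each $T_i$ acts on $E$ as a translation by a definite element $t_i \in E$. The map closing one circuit of a putative $N$-gon is a composition of $N$ such $T_i$'s, one per side in the prescribed assignment, and is therefore itself a translation on $E$. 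The hypothesis that a single $N$-gon closes says this translation has a fixed point, so it is the identity, whereupon every starting point on $E$ yields a closing $N$-gon, proving the porism.

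The crucial observation, and the point where the general version goes beyond the single-caustic case, is the identification $E_i \cong E$: although it would seem that each caustic $\D_i$ brings its own elliptic curve, the shared base locus forces all the branched covers $E_i \to \C$ to coincide. Once this is in place, the composition-of-translations argument is formal. The more delicate technical step, and the principal obstacle, is verifying that the identifications are chosen compatibly for all $i$ so that the ``other endpoint'' involutions $\tau_i$ compose coherently on a common abelian group structure; this reduces to a local computation around the base points $B$ using the quadratic equation whose discriminant cuts out the branch locus.
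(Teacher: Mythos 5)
First, a point of reference: the paper does not prove this theorem at all --- PCT is quoted as a classical result with a citation to Del Centina's survey, so there is no in-paper argument to compare yours against. Your proposal must therefore stand on its own, and while it follows the standard modern (Griffiths--Harris / Dragovi\'c--Radnovi\'c) architecture and correctly isolates the key structural fact --- that all the incidence curves $E_i$ are double covers of $\C$ branched over the \emph{same} four base points of the pencil, hence identifiable with a single elliptic curve $E$ --- it has a genuine gap exactly at the step you defer as ``the more delicate technical step.''

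The gap is this. Your composite map models only one combinatorial type of chain: writing each step as $\tau_i\circ\sigma$ presumes that at every vertex one passes from the incoming tangent line to the outgoing one via the deck involution $\sigma$. That is forced when consecutive sides touch the \emph{same} caustic (to avoid backtracking), but when $\D_{i_j}\neq\D_{i_{j+1}}$ the outgoing side is one of \emph{two} tangents from $P_{j+1}$ to $\D_{i_{j+1}}$, neither of which is excluded, and which of the two your identifications $\psi_i\colon E_i\to E$ select is not controlled by you. Since each $\psi_i$ carries only one binary degree of freedom but is reused at every occurrence of $\D_i$, you cannot in general tune the identifications to reproduce the given closed $N$-gon; the chain map realized by that polygon is $\hat\tau_{i_N}\sigma^{\delta_{N-1}}\cdots\sigma^{\delta_1}\hat\tau_{i_1}$ with exponents $\delta_j\in\{0,1\}$ dictated by the geometry, and its parity is $(-1)^{N+\sum\delta_j}$: when this is odd the map has the form $z\mapsto -z+c$, which has four fixed points without being the identity, and the punchline collapses. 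A second, related leak is the closure condition itself: ``the polygon closes'' only says that the final lift lies \emph{over} $P_1$, i.e.\ $F(e_1)\in\{e_1,\sigma(e_1)\}$, not that $F(e_1)=e_1$; in the case $F(e_1)=\sigma(e_1)$ with $F$ a translation, ``translation with a fixed point'' is simply not the hypothesis you have. (Indeed, already for the single-caustic triangle the correctly bookkept return map is \emph{odd} and closure occurs ``with a flip,'' so one concludes $F=\sigma$ rather than $F=\mathrm{id}$.) The theorem is of course true, but the content of the multi-caustic case lives precisely in this parity/lifting bookkeeping --- showing that the closure type and the parity of the composite are correlated so that the relevant map ($F$ or $\sigma\circ F$) is always a translation --- and that is a global argument, not the ``local computation around the base points'' you suggest. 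You should also state the genericity you are using (four distinct base points, all conics smooth), since the circle pencils appearing in this paper include tangent and non-intersecting configurations that require a degeneration argument.
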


Recall the pencil of two conics is a linear combination of their implicit equations \cite{bix}.


\begin{figure}
    \centering
    \includegraphics[width=\textwidth]{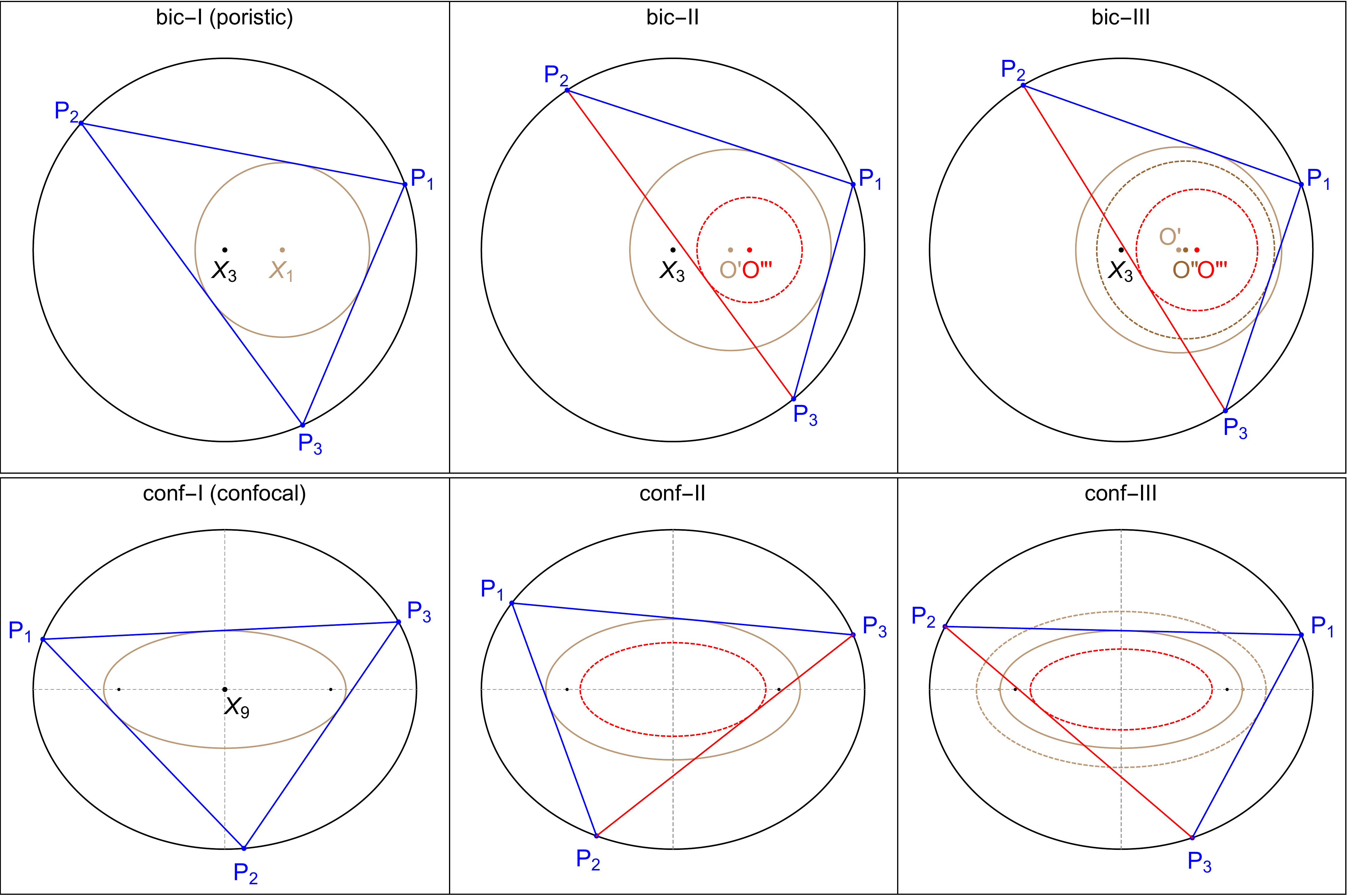}
    \caption{Triangle families considered herein, whose names appear at the top (bic-I, bic-II, etc.). The roman numeral after each name is the caustic count. \textbf{Top row}: the poristic pair (left) and its derivatives with 2, and 3 in-pencil caustics (middle and right).
    \textbf{Bottom row:} the confocal pair (left) and its derivatives with 2, and 3 in-pencil caustics (middle and right). \href{https://youtu.be/8HXgkuY-nFQ}{Video}}
    \label{fig:bics-confs}
\end{figure}

Referring to \cref{fig:bics-confs}, with PCT in mind, we analyze loci of triangle centers over variants of two well-known triangle porisms:

\begin{compactitem}
    \item the bicentric family (also known as poristic triangles or Chapple's porism): these have a fixed incircle and circumcircle; see \cref{fig:poristic}. 
    \item the confocal family (also known as the elliptic billiard): a constant-perimeter family of triangles interscribed between two confocal ellipses; see \cref{fig:inc-exc}.
\end{compactitem}

Specifically, we want to analyze loci of their triangle centers as we perturb their $N=3$ caustic and/or add additional caustics, in the spirit of PCT. The main surprise is that the locus of certain triangle centers and excenters ``survive'' (remain conics) despite PCT.

\subsection*{Summary of the results:}

\begin{compactitem}
\item For the 2-caustic poristic family; see \cref{fig:bics-confs} (center top):
\begin{compactitem}
    \item \cref{thm:bicII-x1}: the locus of the incenter is still a circle.
    \item \cref{prop:bicII-x2}: the locus of the barycenter is an algebraic curve of degree 6 (given explicitly in \cref{app:locus-x2}). Indeed, in the standard 2-circle poristic family, this locus is a simple circle \cite{odehnal2011-poristic}.
    \item The locus of one of the excenters is also a circle while that of the other two are distinct non-conics.
    \item \cref{conj:bicII-stationary}: experimental evidence suggests that a necessary (though not sufficient) condition for the locus of a given triangle center to be a conic is that its locus over the poristic family is a point.
\end{compactitem}
\item For the 2-caustic confocal family; see \cref{fig:bics-confs} (center bottom):
\begin{compactitem}
    \item \cref{prop:confII-x1}: the locus of the incenter is an ellipse only for the standard confocal pair.
    \item \cref{thm:confII-exc}: The loci of two of the excenters are the same ellipse while the third one sweeps a non-conic.
    \item \cref{cor:confII-n4,cor:confII-n6}: the elliptic locus of said excenters can assume, for certain configurations, special shapes: (i) its aspect ratio can be the reciprocal of that of the external ellipse, and (ii) it can be a circle.
\end{compactitem}
\end{compactitem}

In \cref{sec:three-caustics} we look at either a circle- or ellipse-inscribed family with three caustics (right column of \cref{fig:bics-confs}). We find that (i) the locus of a triangle center is never a conic, and that (ii)  the shape of many loci are more complicated (see \cref{fig:conf-III}).

Loci phenomena for all families and triangle centers considered are summarized side-by-side in \cref{tab:summary} in \cref{sec:videos}.

\subsection*{Related work}

Loci of triangle centers over poristic triangles have been studied in depth in \cite{garcia2020-similarity-I,odehnal2011-poristic}. As shown there, the loci of triangle centers can be points, segments, conics, and other shapes.

Regarding the confocal family,  loci of both incenter and  excenters\footnote{The intersections of external bisectors \cite[Excenter]{mw}.} are ellipses with reciprocal aspect ratios \cite{garcia2019-incenter,olga14}. Loci of other notable centers such as the circumcenter, orthocenter, etc., are also elliptic  \cite{corentin2021-circum,garcia2019-incenter,garcia2020-ellipses}. Certain loci are non-conic  (e.g., that of the symmedian point, Fermat point, etc.)   \cite{garcia2020-new-properties}. Remarkably, the locus of the mittenpunkt\footnote{This is where lines from the excenters through the midpoints of a triangle concur \cite[X(9)]{etc}.} is stationary at the common center \cite{reznik2020-intelligencer}.

Experiments suggests that only in the confocal pair can the locus of the incenter be a conic \cite{helman2021-power-loci}.

\subsection*{Article structure}

The basic geometry of the poristic (resp. confocal) family is reviewed in \cref{sec:poristic} (resp. \cref{sec:confocal}). In \cref{sec:bicII} (resp. \cref{sec:confII}) we analyze loci when an additional in-pencil caustic is added.

Links to simulation videos are included in the caption of most figures (a table with all videos mentioned appears in \cref{sec:videos}. As mentioned before, \cref{app:locus-x2} provides an explicit parametrization for the locus of the barycenter under the bic-II family. \cref{app:vtx-param} provides an explicit parametrization for bic-II and conf-II vertices. Finally, \cref{app:symbols} compiles all main symbols used herein in a table.

\section{Review: Poristic Family}
\label{sec:poristic}
Referring to \cref{fig:poristic}, any triangle is always ``poristic'' with respect to its circumcircle and incircle, in the sense that a one-parameter Poncelet family of triangles automatically exists interscribed between said circle pair.

\begin{figure}
    \centering
    \includegraphics[width=.9\textwidth]{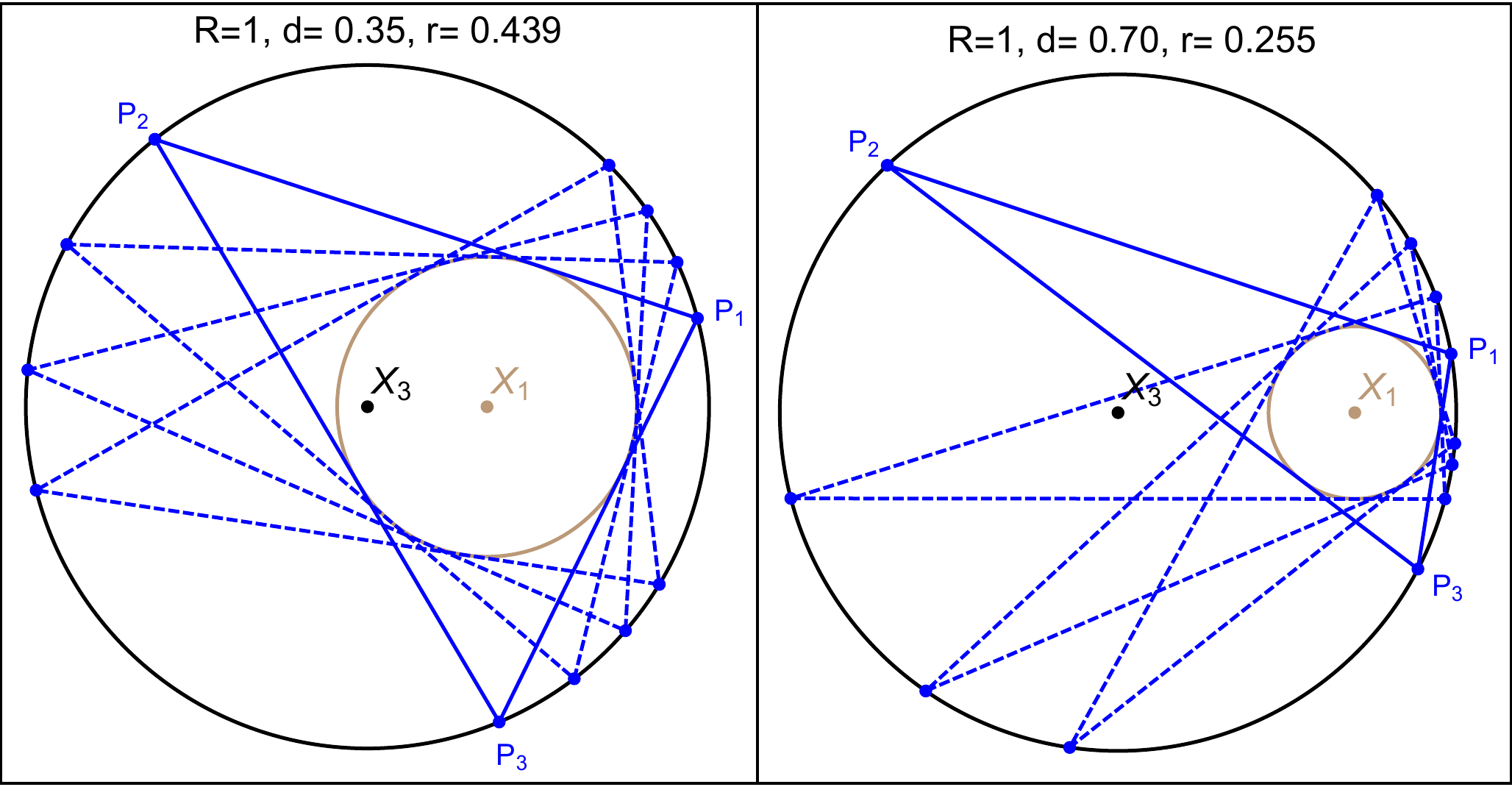}
    \caption{Two examples of the poristic family, i.e., a family of triangles (blue and dashed blue) inscribed in an outer circle (black) while circumscribed about an inner one (brown). Using Kimberling's notation \cite{etc}, $X_3$ (resp. $X_1$) denote the circumcenter (resp. incenter). \href{https://bit.ly/2V7lf9b}{Live}}
    \label{fig:poristic}
\end{figure}

This fact was first described by William Chapple in 1749 \cite{chapple1749}, almost 80 years prior to Poncelet generalizing it to any pair of conics in 1822 \cite{poncelet1822}. Let $d,R,r$ denote, respectively, the distance between circle centers, the circumradius and inradius. Chapple derived\footnote{Also known as ``Euler's Theorem'', though only published in 1765.}
\begin{equation}
    d^2 = R(R-2 r),\;\;\;R>2r.
    \label{eqn:chapple}
\end{equation}
Equivalently, $1/(R-d)+1/(R+d)=1/r$. The abovementioned porism is sometimes called the $N=3$ {\em bicentric family}, i.e., a Poncelet porism between two non-concentric circles. We will refer to it as ``bic-I'' for reasons that will become clear.

Adopting Kimberling's $X_k$ notation \cite{etc} for triangle centers, the incenter $X_1$ and circumcenter $X_3$ of bic-I triangles are fixed by definition. Indeed, it has been shown that several triangle centers remain stationary over this family, while others sweep circles  \cite{odehnal2011-poristic}. Another interesting fact is that this family conserves the sum of its interior angle cosines\footnote{This conservation is also manifested by the {\em confocal} pair, seen later in the article.}, since \cite[Inradius, Eqn. 9]{mw}:
\[ \sum_{i=1}^{3}\cos\theta_i = 1+\frac{r}{R}.\]

\section{Bicentrics with Two Caustics}
\label{sec:bicII}
We now consider a slight variant of the poristic family, namely triangles $\T=P_1 P_2 P_3$, inscribed in an outer circle $\C=[(0,0),R]$, with two sides $P_1 P_2$ and $P_1 P_3$ tangent to an inner circle $\C'=[(d,0),r]$, where \cref{eqn:chapple} does not hold.

\begin{definition}
The pencil of two conics $\E$ and $\E'$ is a one-parameter family of the linear combinations of their implicit equations. 
\end{definition}

We review a result, called Poncelet's Main Lemma (PML), which will simplify our constructions. This was proved by Poncelet so as to support the proof of PCT \cite{centina15,poncelet1822}:

\begin{lemma}[PML]
Let $\C$ and $\D_i$, $i=1,\ldots,N-1$ be $N$ distinct conics in the same pencil. Let $P_i$, $i=1,\ldots,N$ denote the vertices of a polygonal chain inscribed in $\C$ whose $N-1$ sides are each tangent to a $\D_i$. So $P_1$ (resp. $P_N$) is the first (resp. last) vertex in the chain. The open side $P_N P_1$ will envelop a conic $\D_N$, contained in the original pencil.
\label{lem:pml}
\end{lemma}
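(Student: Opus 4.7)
The plan is to invoke the elliptic-curve framework underlying Poncelet's theorem, in the spirit of Griffiths--Harris. The first observation I would exploit is that, since $\C$ and the $\D_i$ all live in a single pencil, any two members of the pencil share the same four base points; in particular $\C\cap\D_i$ consists of the same four points for every $i$. For each $i$ I would then form the incidence curve
\[
E_i \;=\; \{(p,\ell): p\in\C,\; \ell \text{ tangent to } \D_i,\; p\in\ell\},
\]
presented as a double cover of $\C\cong\mathbb{P}^1$ via $(p,\ell)\mapsto p$, branched over the four base points. By Riemann--Hurwitz each $E_i$ is a smooth curve of genus one. Crucially, because the branch locus is the \emph{same} four points for every $i$, all the $E_i$ can be identified with a single elliptic curve $E$ carrying a canonical projection $\pi:E\to\C$.

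On $E$, two natural involutions over $\C$ arise: the sheet-swap $\sigma_i$, sending $(p,\ell)$ to $(p,\ell')$ with $\ell'$ the other tangent from $p$ to $\D_i$; and the line-swap $\tau_i$, sending $(p,\ell)$ to $(p',\ell)$ with $p'$ the other intersection of $\ell$ with $\C$. The Poncelet-step correspondence $T_i$ on $\C$, sending $p$ to the second endpoint on $\C$ of a tangent from $p$ to $\D_i$, lifts under these identifications to $\tau_i\circ\sigma_i$ on $E$. The key structural statement I would then establish is that this lifted map is a \emph{translation} on the group $E$, by some element $t_i\in E$, and that the assignment $\lambda\mapsto t_\lambda$ from the pencil parameter to $E$ (well-defined up to the hyperelliptic involution $-\mathrm{id}$) is surjective.

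Granting that framework, the polygonal chain $P_1,\dots,P_N$ lifts to a trajectory on $E$ obtained by applying the translations $t_1,\dots,t_{N-1}$ in succession, so the lift of $P_N$ differs from that of $P_1$ by the sum $t:=t_1+\cdots+t_{N-1}\in E$. By surjectivity of $\lambda\mapsto t_\lambda$, there exists a pencil parameter $\lambda_N$ with $t_{\lambda_N}=t$. Unwrapping the identifications, this says precisely that the closing side $P_NP_1$ is tangent to the conic $\D_N:=\D_{\lambda_N}$ in the pencil. Since $\lambda_N$ depends only on $\lambda_1,\dots,\lambda_{N-1}$ and on the ambient elliptic structure, the same $\D_N$ works for every starting position of $P_1$, so as $P_1$ varies the closing sides envelop $\D_N$.

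The hard part, and the place where I would spend most of the work, is the middle step: identifying all the $E_i$ with a single $E$ in a way compatible with the projections to $\C$, and verifying that each lifted Poncelet step really is a translation rather than a more general automorphism of $E$, together with the surjectivity of $\lambda\mapsto t_\lambda$. A self-contained derivation typically proceeds either by parametrizing the pencil and the tangent correspondence in Weierstrass or theta coordinates and computing directly, or by an abstract jacobian-of-the-fibration argument organising the variety $\{(p,\ell,\lambda):p\in\C,\;\ell\text{ tangent to }\D_\lambda,\;p\in\ell\}$ as an elliptic surface over the $\lambda$-line. Once this translation structure is in place, the lemma reduces to the commutativity of addition on the abelian group $E$.
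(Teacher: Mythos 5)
The paper does not actually prove this lemma --- it is stated as Poncelet's classical result and deferred to \cite{centina15,poncelet1822}, where the argument is the original synthetic projective one. Your route is therefore genuinely different: it is the modern Jacobi/Griffiths--Harris elliptic-curve proof. The strategy is sound and is the standard way this general closure statement is established today. Your reduction is correctly organized: the base locus $\C\cap\D_i$ is the same quadruple for all $i$ because the conics are in one pencil, so each incidence curve $E_i$ is the double cover of $\C\cong\mathbb{P}^1$ branched over that fixed quadruple; such a cover is unique up to isomorphism over the base (there is a unique square root of $\mathcal{O}(4)$ on $\mathbb{P}^1$), which is what makes the identification of all the $E_i$ with one curve $E$ over $\C$ canonical. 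The translation claim then comes cheaply: each of $\sigma_i$ and $\tau_i$ is an involution of a genus-one curve \emph{with fixed points} ($\sigma_i$ fixes the points over the base locus, $\tau_i$ the points where $\ell$ is tangent to $\C$), hence has the form $x\mapsto c-x$ in the group law, and the composite is a translation. Surjectivity of $\lambda\mapsto \pm t_\lambda$ follows because it is a nonconstant morphism from the pencil line $\mathbb{P}^1$ to $E/\{\pm 1\}\cong\mathbb{P}^1$. What each approach buys: Poncelet's synthetic argument stays in real projective geometry and is elementary but long; yours explains \emph{why} the closing side must again be tangent to a pencil member (commutativity of the group law) and immediately yields that $\D_N$ depends only on $\D_1,\dots,\D_{N-1}$ and not on $P_1$, which is exactly the envelope statement.

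Two caveats you should make explicit before calling this a proof. First, you openly defer the central verification (the translation structure and surjectivity); as written this is a plan, not a proof, and that middle step is where essentially all the content lives. Second, the argument is naturally a statement over $\Cp$, while the lemma is used in the paper over $\R$; one must check that the pencil parameter $\lambda_N$ produced is real and that the corresponding member may be a \emph{degenerate} real conic --- the paper itself exhibits the envelope collapsing to a limiting point of the pencil (the case $r''=0$), so the word ``conic'' in $\D_N$ must be read as including degenerate members, and your surjectivity argument should be checked to cover them.
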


Referring to \cref{fig:bic-II}, \cref{lem:pml} implies that the envelope of $P_2 P_3$ will be a distinct circle $\C'''$ in the pencil of $\C,\C'$, which can be regarded as a second caustic. We therefore call this family ``bic-II'' (bicentric family with two caustics).

\begin{figure}
    \centering
    \includegraphics[width=\textwidth]{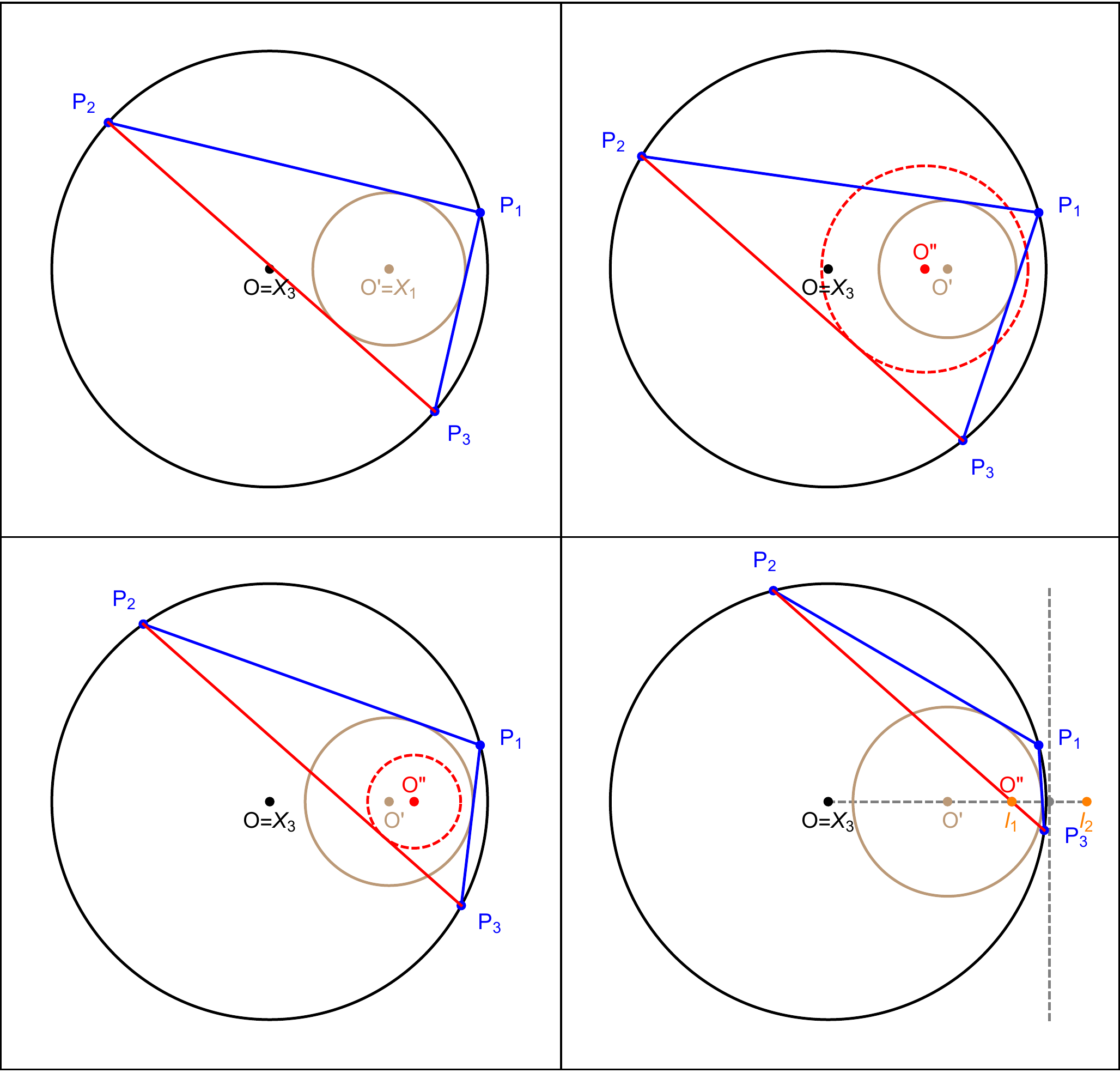}
    \caption{\textbf{Top left:} Chapple's porism or ``bic-I'' (poristic) family: triangles $\T=P_1 P_2 P_3$ with a fixed circumcircle $\C$ (black) and incircle $\C'$ (brown), centered at $O=X_3$ and $O'=X_1$ and with radii $R$ and $r$, respectively. \textbf{Top right:} the bic-II family is obtained by decreasing $r$  (bic-II family) while maintaining $P_1 P_3$ and $P_1 P_2$ tangent to $\C'$. Poncelet's general closure theorem predicts that the envelope of the third side (red) is a  third circle $\C''$ (dashed red) in the pencil of $\C$ and $\C'$ centered at $O''$. Note $X_1$ (not shown) will no longer coincide with $O'$. \textbf{Bottom left:} a bic-II triangle where $\C'$ is exterior to the envelope of the third side (dashed red). \textbf{Bottom right:} a bic-II triangle such that the envelope of the third side collapses to a limiting point $\ell_1$ of the $\C,\C'$ pair. Also shown are (i) the radical axis (vertical dashed gray) of $\C,\C',\C''$, and (ii) the second limiting point $\ell_2$ of the pencil. \href{https://youtu.be/OM7uilfdGgk}{Video}}
    \label{fig:bic-II}
\end{figure}

Manipulation with CAS yields:

\begin{proposition}
Over the bic-II family, the envelope $\C''=[O'',r'']$ of $P_2 P_3$ is a circle in the pencil of $\C,\C'$ given by
\begin{align*}
O''=  &\left[\frac{4 d R^2 r^2}{(R^2 - d^2)^2}, 0\right], \\
r'' = &\frac{R(R^4 - 2 R^2 d^2 - 2 R^2 r^2 + d^4 - 2 d^2 r^2)}{(R^2-d^2)^2} = \frac{(p^2q^2 - p^2 - q^2)(p + q)d}{p^2q^2(p - q)},
\end{align*}
where $p=(R+d)/r$ and $q=(R-d)/r$.
\end{proposition}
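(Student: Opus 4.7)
My plan is to combine \cref{lem:pml} with the reflective symmetry of the configuration, reducing the identification of $\C''$ to two one-dimensional computations.

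First, I would apply \cref{lem:pml} to the two-edge polygonal chain $P_2P_1P_3$: its vertices are all on $\C$ and both its edges are tangent to $\C'$, so the lemma gives us that the closing edge $P_2P_3$ envelops a conic $\C''$ in the pencil of $\C$ and $\C'$. Any linear combination of the defining equations of two circles has matching $x^2$ and $y^2$ coefficients and zero $xy$ coefficient, so every non-degenerate member of the pencil is again a circle whose center lies on the line through the centers of $\C$ and $\C'$, namely the $x$-axis. Hence $\C''=[(O''_x,0),r'']$, and only these two scalars need to be determined.

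Second, I would pin down $O''_x$ and $r''$ by computing the line $P_2P_3$ at the two symmetric positions $P_1=(\pm R,0)$. With $P_1=(R,0)$, the two tangent lines from $P_1$ to $\C'$ are mirror images across the $x$-axis, so $P_2$ and $P_3$ reflect into one another and $P_2P_3$ is vertical at some $x=x_0$. The distance condition from $(d,0)$ to a tangent $y=m(x-R)$ fixes $m^2=r^2/[(R-d)^2-r^2]$, and Vieta on the second intersection of that line with $\C$ yields
\[
x_0 \;=\; \frac{R\bigl[2r^2-(R-d)^2\bigr]}{(R-d)^2}.
\]
Running the same calculation at $P_1=(-R,0)$ produces a second vertical tangent line $x=x_0'=R[(R+d)^2-2r^2]/(R+d)^2$. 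A circle tangent to two distinct parallel lines is centered on their perpendicular bisector with radius equal to half their separation, so $O''_x=(x_0+x_0')/2$ and $r''=(x_0'-x_0)/2$.

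Third, placing each sum and difference over the common denominator $(R^2-d^2)^2$ and simplifying recovers the two closed forms in the statement; the $(p,q)$-form then follows by direct substitution of $p=(R+d)/r$, $q=(R-d)/r$. The only real obstacle is the polynomial bookkeeping — precisely the ``CAS manipulation'' the authors allude to — and no further geometric ingredient is needed beyond PML and the reflective symmetry of the configuration. One minor thing to verify is that the two symmetric choices of $P_1$ already pin down a unique circle in the pencil (a generic line is tangent to two circles of a coaxial pencil, but demanding tangency to both vertical lines eliminates the spurious solution).
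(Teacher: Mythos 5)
Your derivation is correct, and it is a genuinely different (and more informative) route than the paper's, which offers no argument at all beyond the phrase ``Manipulation with CAS yields.'' The two formulas you obtain for the special vertical positions check out: with $P_1=(R,0)$ the tangency condition gives $m^2=r^2/[(R-d)^2-r^2]$ and Vieta yields $x_0=R[2r^2-(R-d)^2]/(R-d)^2$, and symmetrically $x_0'=R[(R+d)^2-2r^2]/(R+d)^2$; then
\[
\tfrac{1}{2}(x_0+x_0')=\frac{4dR^2r^2}{(R^2-d^2)^2},\qquad
\tfrac{1}{2}(x_0'-x_0)=\frac{R\bigl[(R^2-d^2)^2-2r^2(R^2+d^2)\bigr]}{(R^2-d^2)^2},
\]
which are exactly the stated $O''$ and $r''$ (and the latter expression also explains the paper's remark that $r''=0$ precisely when $(R^2-d^2)^2=2r^2(R^2+d^2)$; the $(p,q)$ form follows by substitution as you say). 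Your reduction is sound: once \cref{lem:pml} guarantees the envelope is an in-pencil conic, and the pencil of two circles consists of circles centered on their line of centers, two distinct parallel tangent lines determine the circle uniquely, and your closing caveat is easily discharged since $|h-x_0|=|h-x_0'|$ with $x_0\neq x_0'$ forces $h=(x_0+x_0')/2$. Two small points worth noting: (i) your invocation of \cref{lem:pml} with both chain edges tangent to the \emph{same} conic $\C'$ is formally outside the lemma's hypothesis of distinct $\D_i$, but this is exactly how the paper itself applies the lemma to bic-II, so it is not a gap you introduced; (ii) the quantity $\tfrac{1}{2}(x_0'-x_0)$ is a signed radius, consistent with the paper's unsigned-looking but sign-carrying formula for $r''$. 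What your approach buys over the paper's is a proof that could be checked by hand and that makes the geometric origin of both the center (midpoint of the two symmetric tangent chords) and the degeneration to a limiting point transparent; what the CAS route buys is only that it avoids having to justify that two tangent positions suffice.
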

\noindent Note that $r'' = 0$ is achieved if $(R^2-d^2)^2- 2r^2(R^2+d^2) = 0$.

Recall that for even N, the diagonals of Poncelet N-periodics in the bicentric pair (with incircle and circumcircle) meet at one of the limiting points of the pair, see \cref{fig:bic-II} (bottom right). The above relation is equivalent to the well-known condition for a pair of circles to admit a Poncelet family of quadrilaterals. One formulation, due to Kerawala \cite[Eq. 39]{mw} is
\[ \frac{1}{(R-d)^2} + \frac{1}{(R+d)^2} = \frac{1}{r^2} \cdot \]

\begin{theorem}
Over the bic-II family, the locus of the incenter $X_1$ is a circle $\C_1=[O_1,r_1]$ not in the pencil of $\C,\C'$, given by
\[ O_1= \left[\frac{2 d R r}{R^2 - d^2},0\right],\;\;\;r_1=\frac{R (R^2 - 2 R r - d^2)}{R^2 - d^2} \cdot
\]
\label{thm:bicII-x1}
\end{theorem}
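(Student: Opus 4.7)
Take $\C$ centred at the origin with radius $R$ and $\C'=[(d,0),r]$, and parametrise the free vertex as $P_1=(R\cos\theta,R\sin\theta)$. An explicit rational parametrisation of $P_2,P_3$ (as in \cref{app:vtx-param}) follows from solving the tangency quadratic for the slopes of $P_1P_2$ and $P_1P_3$; its discriminant simplifies to $r^2(u^2-r^2)$ with $u^2=|P_1O'|^2=R^2-2Rd\cos\theta+d^2$. After the Weierstrass substitution $t=\tan(\theta/2)$ every quantity in sight is rational in $t$.

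\textbf{Key reduction.} Since $\C'$ is inscribed in the angle $\angle P_2P_1P_3$, so is the true incircle of $\T=P_1P_2P_3$; both centres therefore lie on the bisector $P_1O'$, and a right-triangle calculation at $P_1$ gives
\[
\sin(\alpha_1/2)=\frac{r}{|P_1O'|}=\frac{r_\T}{|P_1X_1|} \;\Longrightarrow\; X_1 = \Bigl(1-\frac{r_\T}{r}\Bigr)P_1 + \frac{r_\T}{r}O',
\]
where $\alpha_1=\angle P_2P_1P_3$ and $r_\T$ is the inradius of $\T$. To get $r_\T(\theta)$ in closed form I would either apply the identity $r_\T=4R\prod_i\sin(\alpha_i/2)$ with the three inscribed angles read off from $\theta,\theta_2,\theta_3$, or, more economically, use that the incircle must also be tangent to $P_2P_3$ (which envelops $\C''$, described by the preceding proposition): among the one-parameter family of circles inscribed in the wedge at $P_1$, this latter tangency becomes a single linear equation in $r_\T$ along the bisector and fixes its value.

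\textbf{Conclusion and obstacle.} Substituting $r_\T/r$ into the barycentric identity yields a rational map $X_1(t)$, and the proof concludes with a CAS verification of the polynomial identity $|X_1(t)-O_1|^2\equiv r_1^2$ for the stated $(O_1,r_1)$. Non-membership of $\C_1$ in the pencil of $\C,\C'$ follows from comparing radical axes with $\C$: the radical axis of $\C_1$ and $\C$ is $x=(R^2-r_1^2+O_{1,x}^2)/(2O_{1,x})$, which coincides with that of $\C,\C'$ only when Chapple's relation $d^2=R(R-2r)$ holds, in which case $r_1=0$ and the locus collapses to the fixed incentre $O'$ of the bic-I family, recovering the known poristic result as a consistency check. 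The main obstacle is the bulk of the intermediate expressions for $r_\T(\theta)$; retaining $u$ as an auxiliary variable and exploiting the $\theta\mapsto-\theta$ reflection symmetry across the line $OO'$ keeps the bookkeeping within reach of routine CAS simplification.
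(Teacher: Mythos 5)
Your proposal is sound, but it reaches the circle by a genuinely different route than the paper. Both arguments pivot on the same collinearity — the incenter $X_1$ lies on the internal bisector at $P_1$, which is the line $P_1O'$ because $O'$ is equidistant from the two tangent sides — but you anchor the ratio at the \emph{moving} vertex, writing $X_1=(1-r_\T/r)P_1+(r_\T/r)O'$ via $\sin(\alpha_1/2)=r/|P_1O'|=r_\T/|P_1X_1|$, and must then produce $r_\T(\theta)$ explicitly and verify $|X_1(t)-O_1|^2\equiv r_1^2$ by CAS. The paper instead anchors the ratio at the \emph{fixed} point $O'$: letting $D$ be the second intersection of the bisector $P_1O'$ with $\C$, the Trillium Theorem gives $DX_1=AD$, a similar-triangle argument gives $AD=2Rr/O'P_1$, and the power of $O'$ with respect to $\C$ converts this into $O'X_1/O'D=2Rr/(R^2-d^2)-1$, a constant. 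Since $D$ traces $\C$, the locus of $X_1$ is the image of $\C$ under a homothety centred at $O'$, and circularity (plus the formulas for $O_1,r_1$) is immediate with no elimination or parametrization at all. What your approach buys is robustness and uniformity — the same machinery would grind out the loci of other centers (indeed it is how the paper handles $X_2$) — and your radical-axis comparison actually supplies an argument for the "not in the pencil" clause, which the paper asserts without proof. What it costs is that the conceptual reason the locus is a circle (a fixed homothety center) stays hidden inside the CAS identity; also note that in the degenerate case $r_1=0$ the locus is the point $O'$, which is not a limiting point of the pencil, so your parenthetical that coincidence of radical axes occurs exactly at Chapple's relation deserves a separate check rather than being folded into the $r_1=0$ remark.
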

Referring to \cref{fig:bicII-x1-proof}, the following proof was kindly contributed by Alexey Zaslavsky and Arseniy Akopyan, and later adapted by Mark Helman \cite{helman2021-private,zaslavsky2021-private}. 

\begin{figure}
    \centering
    \includegraphics[width=.6\textwidth,frame]{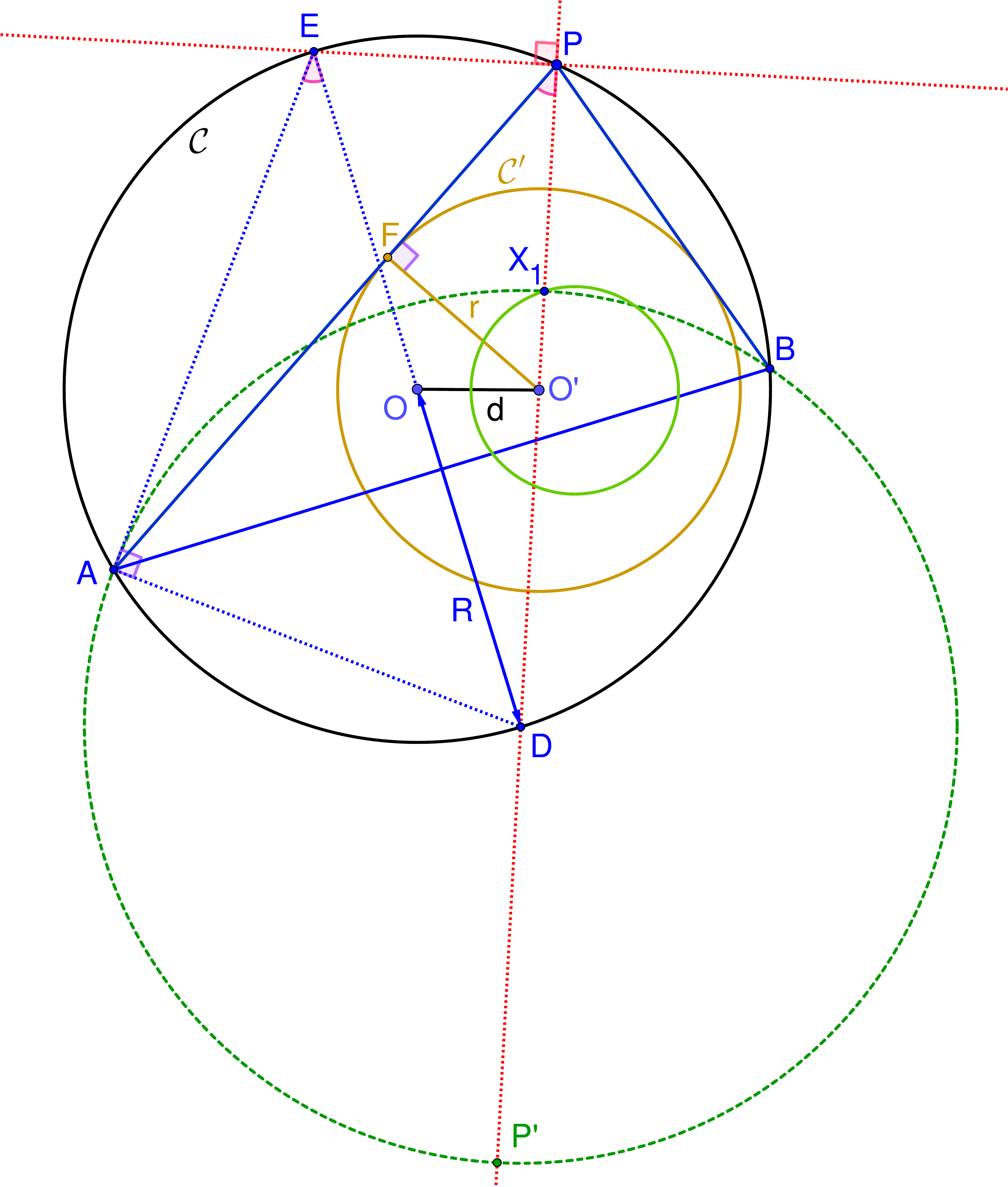}
    \caption{Objects and labels used in the proof of \cref{thm:bicII-x1}. A bic-II triangle $PAB$ is shown, inscribed in an outer circle $\C=(O,R)$ (black) and with two sides tangent to an internal circle $\C'=(O',r)$ (brown). The distance between their centers is $d$. The ``incenter-excenter'' circle (dashed green) is centered at the intersection of the chord $PO'$ with $\C$ and contains $X_1,P'$ as antipodes as well as tangent chord endpoints $A,B$. Triangles $E A D$ and $P F O'$ are similar. Also shown (green) is the circular locus of $X_1$ over the bic-II family. Original proof by Alexey Zaslavsky and Arseniy Akopyan, adapted by Mark Helman: \href{https://www.geogebra.org/classic/zm7wgnrz}{GeoGebra}}
    \label{fig:bicII-x1-proof}
\end{figure}

 \begin{proof}
Let $\T=P A B$ be a triangle inscribed to an outer circle $\C=(O,R)$ with sides $P A$ and $P B$ tangent to an inner circle $\C'=(O',r)$. Let $d=|O-O'|$.

Let $X_1$ denote the incenter of $\T$. Let the interior and exterior angle bisectors at $P$ intersect $\C$ at $D$ and $E$, respectively. Let $F$ be the point of contact of $P A$ with $\C'$.

Since the quadrilateral $AEPD$ is cyclic, $\angle A E D=\angle A P D$. Also, $\angle E A D=\pi-\angle E P D=\pi-\pi/2=\pi/2=\angle P F O'$. Thus, triangles $E A D$ and $P F O'$ are similar, therefore $A D/D E= F O'/O'P$, so $A D/(2R)=r/O'P$. By the Trillium Theorem \cite[Incenter Excenter Circle]{mw}, $AD=D X_1$, so $D X_1=2Rr/O'P$.

Using the definition of the power of a point with respect to circle $\C=EPBDA$, $(O'P)(O'D)=R^2-d^2$, so $O'P=(R^2-d^2)/O'D$. Substituting, we have $D X_1=2Rr(O'D)/(R^2-d^2)$, so $DX_1/O'D=2Rr/(R^2-d^2)$. 
Finally,
\[ \frac{O'X_1}{O'D}=\frac{DX_1-O'D}{O'D}=\frac{DX_1}{O'D}-1=\frac{2Rr}{R^2-d^2}-1.\]

The above implies $O'X_1 / O'D$ is constant and independent of $\T$. This shows that there is a homothety with center $O'$ that sends $\C$ to the locus of $X_1$, so the claim follows. The actual values of $O_1$ and $r_1$ are obtained by applying said homothety to $O$ and $R$ of $\C$.
\end{proof}

\noindent Note that if \cref{eqn:chapple} holds, $r_1=0$, i.e., the family is the standard poristic one for which the incenter locus is a point.

Let $\T'$ denote the {\em excentral triangle} of $\T$ with sides through the vertices of $\T$, perpendicular to the angle bisectors. The vertices of $\T'$ are known as the {\em excenters} $P_i'$, $i=1,2,3$.

In \cite{etc}, the circumcenter (resp. barycenter) of $\T'$ is named the Bevan point $X_{40}$ (resp. $X_{165}$). It is known that $X_{40}$ (resp. $X_{165}$) is a reflection of $X_1$ about $X_3$ with a scale of 1 (resp. 1/3). Referring to \cref{fig:x1-x40-x165}:
\begin{corollary}
Over the bic-II family, the locus of $X_{40}$ (resp. $X_{165}$) is a circle $\C_{40}=[O_{40},r_{40}]=[-O_1,r_1]$ (resp. $\C_{165}=[-O_1/3,r_1/3]$).
\end{corollary}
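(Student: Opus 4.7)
The plan is to reduce the corollary immediately to \cref{thm:bicII-x1} by exploiting the known affine relations between $X_1$, $X_3$, $X_{40}$, and $X_{165}$, combined with the fact that in the bic-II setup the circumcenter $X_3$ is constant and coincides with the origin $O=(0,0)$ by our choice of coordinates.

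First I would recall the standard identities from \cite{etc}: $X_{40}$ is the reflection of $X_1$ through $X_3$, i.e.\ $X_{40} = 2X_3 - X_1$, and $X_{165}$ is the image of $X_1$ under the homothety centered at $X_3$ with ratio $-1/3$, i.e.\ $X_{165} = X_3 - \tfrac{1}{3}(X_1 - X_3) = \tfrac{4}{3}X_3 - \tfrac{1}{3}X_1$. Since every triangle in the bic-II family is inscribed in $\C=[(0,0),R]$, its circumcenter $X_3$ equals $O=(0,0)$ for all members. Substituting gives the simple formulas $X_{40} = -X_1$ and $X_{165} = -\tfrac{1}{3}X_1$, so $X_{40}$ (respectively $X_{165}$) is the image of $X_1$ under the point-reflection through $O$ (respectively the homothety centered at $O$ with ratio $-1/3$).

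Next, since both maps are affine (indeed similarities), they carry circles to circles, sending a circle of center $C$ and radius $\rho$ to a circle of center $-C$ and radius $\rho$, respectively $-C/3$ and $\rho/3$. Applying this to the incenter locus $\C_1 = [O_1, r_1]$ established in \cref{thm:bicII-x1} yields $\C_{40} = [-O_1, r_1]$ and $\C_{165} = [-O_1/3, r_1/3]$, as claimed.

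There is essentially no obstacle: the only thing to be careful about is the sign and scale convention in the definitions of $X_{40}$ and $X_{165}$ relative to $X_1$ and $X_3$, and to invoke the fact that $X_3$ is identically the origin here, which is what collapses the affine maps into the particularly clean form $X_1 \mapsto -X_1$ and $X_1 \mapsto -X_1/3$.
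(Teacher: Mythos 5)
Your proposal is correct and matches the paper's (implicit) argument: the paper likewise invokes the fixed relations $X_{40}=2X_3-X_1$ and $X_{165}=\tfrac{4}{3}X_3-\tfrac{1}{3}X_1$, the fact that $X_3=O=(0,0)$ is stationary for the bic-II family, and then maps the circle $\C_1=[O_1,r_1]$ from \cref{thm:bicII-x1} under the resulting homotheties $X_1\mapsto -X_1$ and $X_1\mapsto -X_1/3$. No gaps.
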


\begin{corollary}
Over the bic-II family, the locus of any triangle center which is at a fixed proportion from $X_1$ and $X_3$ will be a circle.
\end{corollary}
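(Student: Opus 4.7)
The plan is to reduce the statement to the preceding corollary's template: observe that $X_3$, the circumcenter of $\T$, coincides with the center $O$ of the fixed outer circle $\C$, so $X_3$ is a constant point over the bic-II family. By \cref{thm:bicII-x1}, the locus of $X_1$ is the circle $\C_1=[O_1,r_1]$. A triangle center $X$ at a fixed proportion from $X_1$ and $X_3$ may be written as an affine combination
\[
X = (1-t)\,X_3 + t\, X_1
\]
for a constant scalar $t$ independent of the choice of triangle in the family (this is exactly what ``fixed proportion'' means when $X_3$ is a fixed point of the plane).

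Since $X_3$ is stationary, the map $X_1 \mapsto X_3 + t(X_1 - X_3)$ is a homothety $h_t$ of the plane with center $X_3$ and ratio $t$. Homotheties are similarities, hence send circles to circles. Therefore the locus of $X$ is $h_t(\C_1)$, a circle centered at $X_3 + t(O_1 - X_3)$ with radius $|t|\,r_1$. Plugging in $X_3=(0,0)$ and the explicit $(O_1,r_1)$ of \cref{thm:bicII-x1} gives an immediate closed form for this circle, generalizing the two cases $t=-1$ and $t=-1/3$ that recover the preceding corollary for $X_{40}$ and $X_{165}$.

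The only genuine obstacle is semantic rather than mathematical: one must be sure that ``fixed proportion from $X_1$ and $X_3$'' is interpreted in the natural affine sense above, i.e.\ that the coefficients of the combination do not themselves depend on the triangle. For centers whose Kimberling barycentric/trilinear description places them on the line $X_1 X_3$ via a constant ratio (which is the standard usage), this is automatic. Once this convention is fixed, the argument is a one-line consequence of \cref{thm:bicII-x1} together with the invariance of circles under homothety, so no further computation is required.
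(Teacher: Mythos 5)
Your proof is correct and follows the same route the paper implicitly takes: since $X_3$ is stationary (it is the center of the fixed circumcircle) and the locus of $X_1$ is the circle $\C_1$ of \cref{thm:bicII-x1}, any point at a fixed affine ratio along $X_1X_3$ is the image of $X_1$ under a homothety centered at $X_3$, hence sweeps a circle. This is exactly the mechanism behind the preceding corollary's cases $X_{40}$ and $X_{165}$ (ratios $-1$ and $-1/3$), so nothing further is needed.
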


As an example, consider the circumcircle-inverse of $X_1$ (resp. $X_{40}$), called $X_{36}$ (resp. $X_{2077}$) on \cite{etc}. Since the circumcircle is fixed, both their loci are automatically circles.

\begin{figure}
    \centering
    \includegraphics[width=\textwidth]{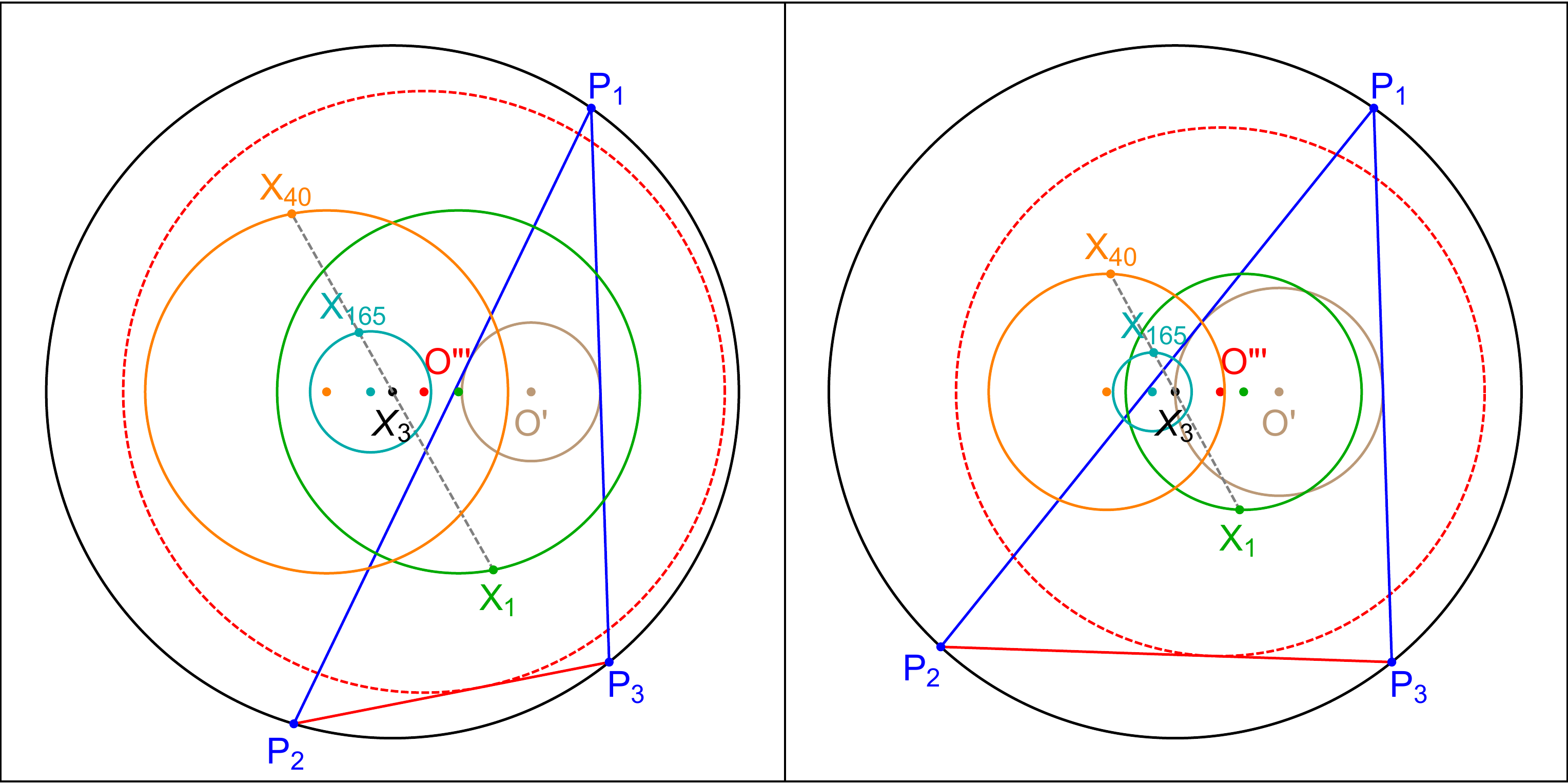}
    \caption{\textbf{Left:} A bic-II triangle $P_1 P_2 P_3$ inscribed in an outer (black) circle centered on $X_3$. Sides $P_1 P_2$ and $P_1 P_3$ (blue) are tangent to a first (brown) circle, centered on $O'$. Over the family, the third side (red) envelops a second, in-pencil circle (dashed red) centered on $O_\text{env}$. The loci of $X_1$ (green) and $X_{40}$ (orange) are copies of each other reflected about $O$. The locus of $X_{165}$ (light blue) is a copy of that of $X_1$, $1/3$-scaled about $O$. Notice $X_k,k=$1, 3, 40, 165 are collinear. \textbf{Right:} a similar setup but with an increase in the first caustic radius and a reduction in the distance between the centers, showing all loci remain circular. \href{https://youtu.be/qJGhf798E-s}{Video}}
    \label{fig:x1-x40-x165}
\end{figure}



Referring to \cref{fig:bicII-topo-x2}, CAS manipulation yields:

\begin{proposition}
Over the bic-II family, the locus of the barycenter $X_2$ is an algebraic curve of degree 6.
\label{prop:bicII-x2}
\end{proposition}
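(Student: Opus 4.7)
The plan is to obtain a rational parametrization of the barycenter locus in a single parameter $t$, and then eliminate $t$ via a resultant to extract an implicit polynomial, following the style of the analogous analyses in \cite{odehnal2011-poristic}.

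First I would parametrize $P_1$ on $\C$ rationally by $P_1(t) = \bigl(R(1-t^2)/(1+t^2),\, 2Rt/(1+t^2)\bigr)$. A line through $P_1$ with slope $m$ is tangent to $\C' = [(d,0), r]$ exactly when the distance from $(d,0)$ to it equals $r$, which after squaring gives the quadratic
\[
m^2\bigl[(d-x_1)^2 - r^2\bigr] + 2m(d-x_1)\,y_1 + (y_1^2 - r^2) = 0.
\]
Its two roots $m_\pm$ are the slopes of the sides $P_1 P_2$ and $P_1 P_3$, so the elementary symmetric functions $s = m_+ + m_-$ and $p = m_+ m_-$ are rational in $(x_1, y_1)$ and hence in $t$.

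Next, for a line through $P_1$ of slope $m$, the second intersection with $\C$ is given in closed form by $x = (x_1(m^2-1) - 2m y_1)/(1+m^2)$, $y = y_1 + m(x - x_1)$. Summing these expressions over $m \in \{m_+, m_-\}$ presents $P_2 + P_3$ as a symmetric rational function of $m_\pm$, hence a rational function of $s$, $p$, $x_1$, $y_1$, and therefore of $t$ alone. This produces a rational parametrization $X_2(t) = (P_1(t) + P_2(t) + P_3(t))/3$ of the entire locus; the explicit vertex formulas that come out of this step are exactly those promised in \cref{app:vtx-param}.

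To finish I would eliminate $t$ between $x = X_{2,x}(t)$ and $y = X_{2,y}(t)$ via the resultant $\mathrm{Res}_t$, producing an implicit polynomial $F(x,y)$. A direct degree count along the chain of rational substitutions bounds the common denominator and both numerators by degree $6$ in $t$; checking that the parametrization is generically injective (e.g.\ by computing a generic fiber) then forces the image curve to have degree exactly $6$. The main obstacle is computational: the symbolic resultant is massive and the elimination is impractical by hand, which is why CAS is invoked and the explicit sextic is relegated to \cref{app:locus-x2}. A secondary verification — that the resulting polynomial is irreducible of full degree (no cancellation collapses it to a conic or quartic) — can be handled by specializing $R, r, d$ to generic numerical values and testing that $F(x,y)$ does not factor.
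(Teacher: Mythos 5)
Your proposal is correct and follows essentially the same route as the paper: parametrize $P_1$ on $\C$, express $X_2$ as a rational function of the parameter (the paper works with $(x_1,y_1)$ subject to $x_1^2+y_1^2=R^2$ rather than the tangent-half-angle variable $t$, and your symmetric-function argument in the slopes $m_\pm$ is exactly why the square root $\Delta$ cancels in $P_2+P_3$), and then eliminate via resultants with a CAS to obtain the explicit degree-6 implicit equation of \cref{app:locus-x2}. The only caveat, which you already flag, is that the "degree exactly 6" claim ultimately rests on the CAS computation showing no cancellation or factorization occurs, which is precisely the level of rigor the paper itself offers.
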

\cref{app:locus-x2} gives an explicit expression for this locus.

\begin{figure}
    \centering
    \includegraphics[width=\textwidth]{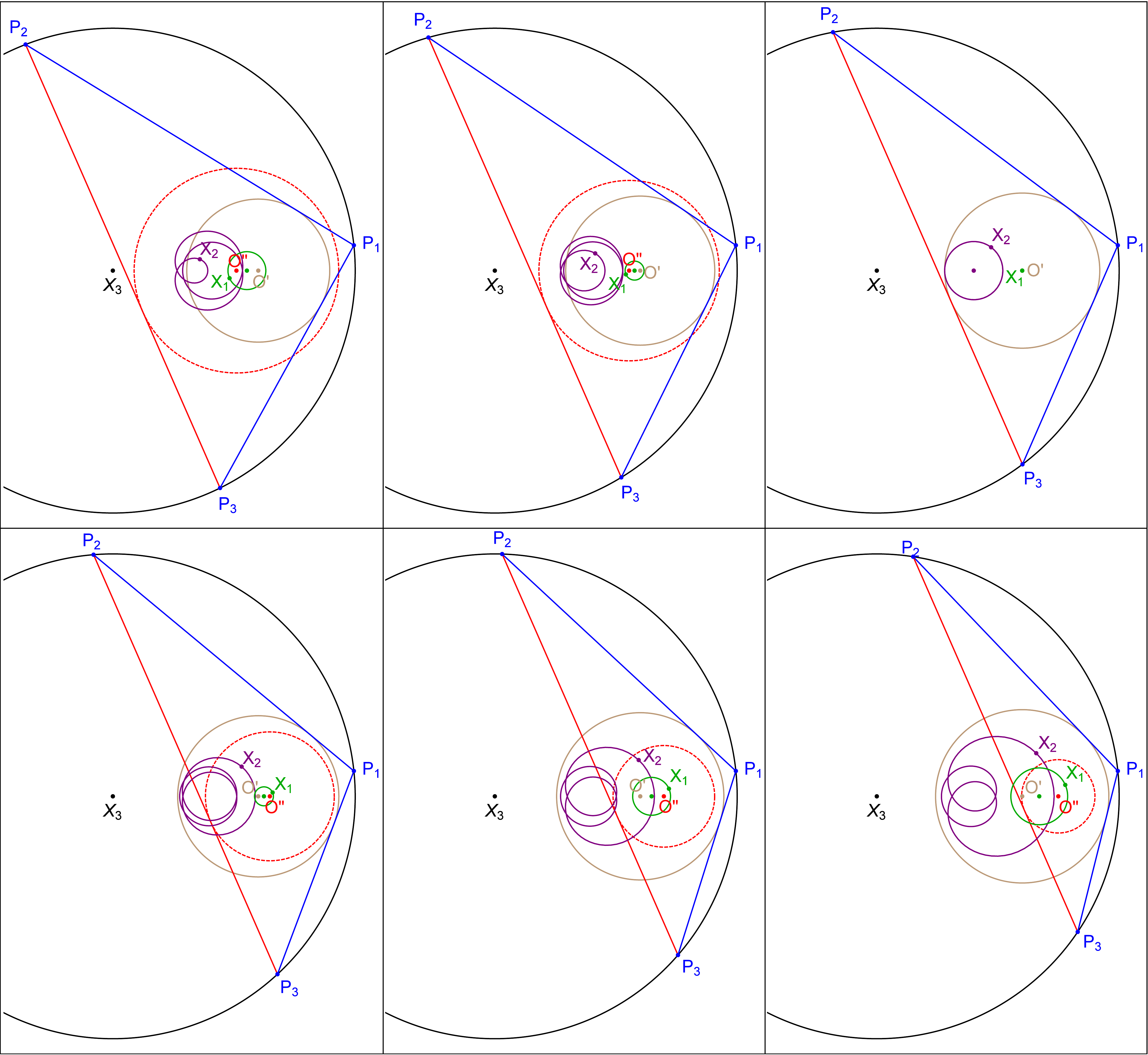}
    \caption{Sextic loci of $X_2$ under fixed $R,d$ and with increasing $r$. Notice that at the poristic configuration (top right), the locus of $X_2$ collapses to a circle (swept three times). \href{https://youtu.be/3dnsWPlAmxE}{Video}}
    \label{fig:bicII-topo-x2}
\end{figure}

The non-conic loci of $X_k,k=$2,4,5,6 are shown in \cref{fig:bicII-x2x4x5}.

\begin{figure}
    \centering
    \includegraphics[trim=0 50 0 0,clip,width=\textwidth]{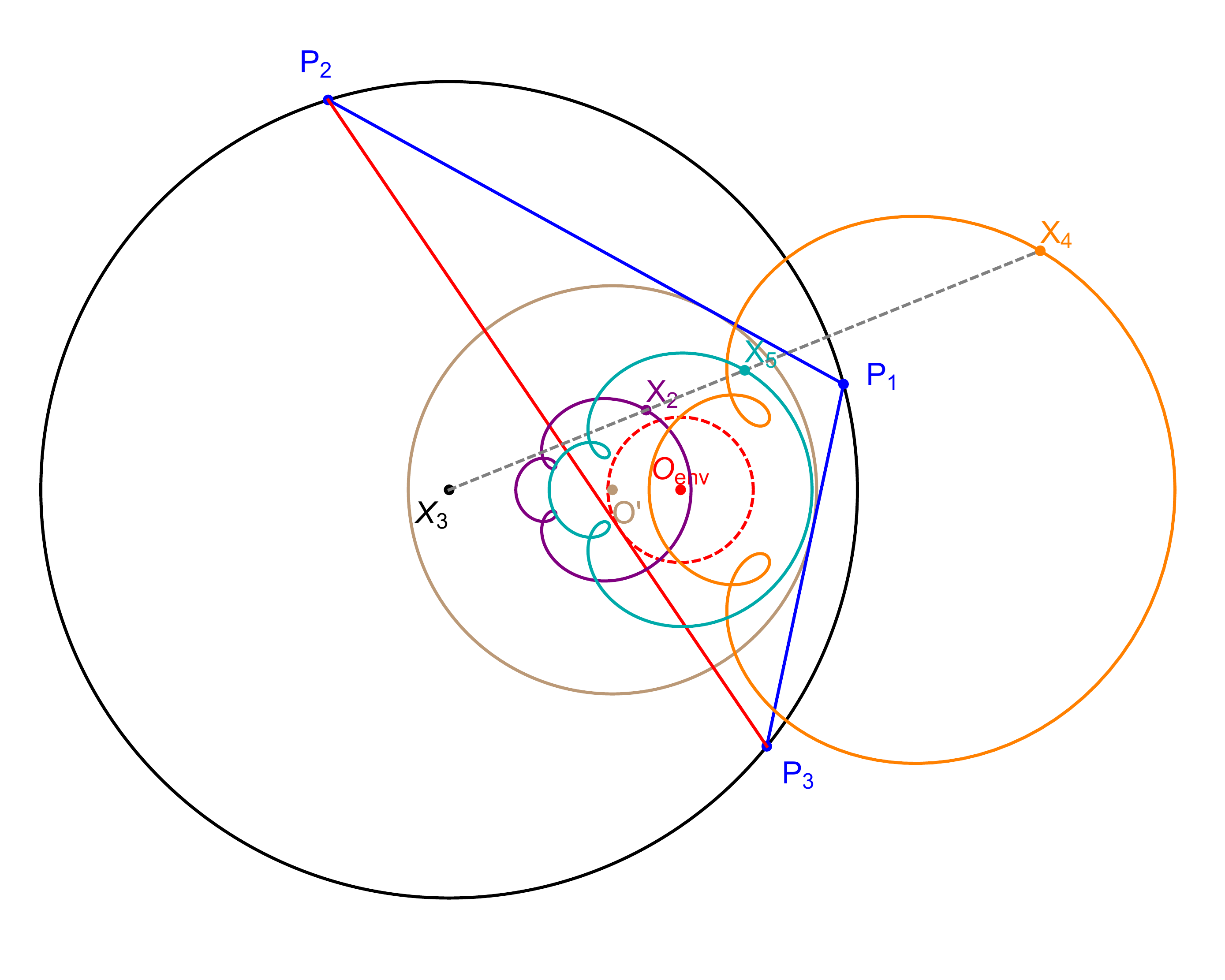}
    \caption{A bic-II triangle and the sextic loci of $X_k,k=$2, 4, 5. Along with $X_3$, these are collinear on the Euler line (dashed black) and at fixed proportions to $X_3 X_2$. Therefore, the three loopy loci are homothetic. \href{https://youtu.be/6Fqp6Z1Q-0A}{Video}}
    \label{fig:bicII-x2x4x5}
\end{figure}

Recall that some triangle centers with the property to always lie on a triangle's circumcircle, e.g., $X_k,k$=74, 98, 99, 100, 101, etc., see \cite[Circumcircle]{mw} for a larger list. Let $X$ be a triangle center not on this list. Recall also that over the poristic family, several triangle centers were identified as stationary \cite{odehnal2011-poristic}; see the first row on \cref{tab:poristic-conics}. Experiments suggest that:

\begin{conjecture}
Over the bic-II family, a necessary (though not sufficient) condition for the locus of $X$ to be a conic is that its locus over the bic-I (poristic) family is a stationary point.
\label{conj:bicII-stationary}
\end{conjecture}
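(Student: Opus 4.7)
The plan is to view the bic-II family as a one-parameter deformation of the bic-I (poristic) family. Fix $R$ and $d$ and write $r = r_c + \epsilon$, where $r_c$ is Chapple's value satisfying \cref{eqn:chapple}, so that $\epsilon = 0$ recovers the poristic configuration (and the induced third caustic $\C''$ coincides with $\C'$). Using the rational parameterization in \cref{app:vtx-param}, the vertices of the bic-II triangle become rational functions of $s=\tan(\theta/2)$, where $\theta$ locates the apex $P_1$ on $\C$. Substituting these vertices into the barycentric formula for a triangle center $X$ produces the locus $\gamma_\epsilon(s) = (x_\epsilon(s), y_\epsilon(s))$ as a rational plane curve depending polynomially on $\epsilon$.

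By definition, $\gamma_\epsilon$ is a conic iff there exist coefficients $(a,b,c,d,e,f)\neq 0$ (possibly depending on $\epsilon$) with $a\,x_\epsilon^2 + b\,x_\epsilon y_\epsilon + c\,y_\epsilon^2 + d\,x_\epsilon + e\,y_\epsilon + f \equiv 0$ as a rational identity in $s$. Clearing denominators and equating coefficients of powers of $s$ gives a linear system $\mathcal{S}_\epsilon$ in $(a,\dots,f)$ whose solvability is a polynomial condition on $\epsilon$ and on the barycentric data of $X$. The next step is to expand $\gamma_\epsilon = \gamma_0 + \epsilon\,\gamma_0' + O(\epsilon^2)$ about the poristic configuration and match powers of $\epsilon$ in the solvability condition.

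By \cite{odehnal2011-poristic}, the bic-I locus $\gamma_0$ is either a point or a proper conic (typically a circle). If $\mathcal{S}_\epsilon$ is solvable on an interval around $0$, matching successive orders of $\epsilon$ forces the higher Taylor coefficients of $\gamma_\epsilon$ to deform $\gamma_0$ strictly within the six-parameter variety of plane conics. When $\gamma_0$ is a single point this is automatic and imposes no obstruction; when $\gamma_0$ is a proper conic it becomes a highly restrictive tangency condition on $\gamma_0'$ (and its iterates). The goal is to show that this tangency condition fails whenever $\gamma_0$ is a proper conic and $X$ is a genuine triangle center — i.e., that the perturbation in $\epsilon$ invariably excites harmonics in $s$ of order higher than those produced by a conic parameterization.

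The main obstacle is the global step: the tangency condition depends intricately on the barycentric function defining $X$, so there is no obvious uniform argument. A tractable route is to verify the conjecture case-by-case for the centers catalogued in \cite{odehnal2011-poristic} that sweep circles under bic-I (by direct CAS elimination on $\mathcal{S}_\epsilon$), and then to seek a structural reason — perhaps that the poristic rotation symmetry responsible for the circularity of $\gamma_0$ is destroyed by $\epsilon\neq 0$, forcing the emergence of higher harmonics in $s$ incompatible with any conic. A fully general proof would seemingly require classifying the barycentric functions compatible with perturbative cancellation to all orders in $\epsilon$, which appears genuinely hard and is presumably the reason the statement is left as a conjecture.
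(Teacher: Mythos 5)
The first thing to say is that the paper itself offers no proof of this statement: it is presented as a conjecture supported only by experimental evidence (the CAS survey summarized in \cref{tab:poristic-conics}), so there is no argument of the authors' to compare yours against. Your proposal is therefore being measured only on its own merits, and on those terms it is a reasonable research plan but not a proof, as you yourself concede in the final paragraph. The perturbative framework is sensible: writing $r=r_c+\epsilon$ with $r_c$ the Chapple value from \cref{eqn:chapple}, reducing ``the locus lies on a conic'' to the rank-deficiency of a linear system $\mathcal{S}_\epsilon$ in the six conic coefficients, and expanding in $\epsilon$ is exactly how one would attack this. (One technical point worth recording: the vertices $P_2,P_3$ in \cref{app:vtx-param} involve $\Delta=\sqrt{R^2+d^2-2dx_1-r^2}$ and are \emph{not} individually rational in $s$; only functions symmetric under $P_2\leftrightarrow P_3$, i.e.\ invariant under $\Delta\mapsto-\Delta$, are rational, which is why the argument applies to genuine triangle centers but would need modification for, say, the individual excenters $P_2',P_3'$.)

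The two genuine gaps are these. First, the entire content of the conjecture is concentrated in the step you describe as ``the tangency condition fails whenever $\gamma_0$ is a proper conic'' --- the claim that the $\epsilon$-perturbation necessarily excites harmonics incompatible with a conic. This is asserted, not argued; no mechanism is offered that uses the hypothesis that $\gamma_0$ is a nondegenerate circle rather than a point, and without such a mechanism the proposal is a restatement of the conjecture in perturbative language. Second, even if that step were carried out, the argument as framed only controls $\epsilon$ in a punctured neighborhood of $0$ (equivalently, generic $(R,d,r)$): the solvability of $\mathcal{S}_\epsilon$ is a determinantal, hence closed algebraic, condition on the parameters, and showing it fails to first or second order in $\epsilon$ only shows it fails off a proper subvariety of parameter space. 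The conjecture quantifies over \emph{all} bic-II configurations, so exceptional $(R,d,r)$ at which a non-stationary center happens to sweep a conic would not be excluded; compare the special caustics in \cref{cor:confII-n4,cor:confII-n6} for how such exceptional loci do arise in the confocal analogue. The case-by-case CAS verification you propose for the circle-sweeping centers of the poristic family would supply further evidence, but evidence is what the paper already has.
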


Referring to \cref{tab:poristic-conics}, examples of triangle centers stationary over the bic-I family which do not yield conics over the bic-II family include $X_k,k=$46, 56, 57, 65.


\begin{table}
\begingroup
\setlength{\tabcolsep}{2pt} %
\begin{tabular}{|l||
c|c|c|c|c|c|c|c|c|c|c|c|c|c|}
\hline
\makecell[lc]{bic-I\\point}& $X_1$ & $X_3$ & $X_{35}$ & $X_{36}$ & $X_{40}$ & $X_{46}$ & $X_{55}$ & $X_{56}$ & $X_{57}$ & $X_{65}$ & $X_{165}$ & $X_{354}$ & $X_{484}$ & $X_{942}$ \\
\hline
bic-II & C & P & E & E & C & X & E & X & X & X & C & E & E & E \\
bic-III & X & P & X & X & X & X & X & X & X & X & X & X & X & X\\
\hline
\end{tabular}
\endgroup
\caption{The first row lists triangle centers known to be stationary over bic-I (poristic family) amongst the first 1000 centers on \cite{etc}. The second (resp. third) row lists their locus type in the bic-II (resp. III) family. P,C,E,X stand for point, circle, ellipse, and non-conic, respectively.}
\label{tab:poristic-conics}
\end{table}

\subsection*{Studying the excenters}

Referring to \cref{fig:bicII-x1-proof}, notice the excenter $P'$ opposite to $C$ is antipodal to $X_1$ on the ``incenter-excenter'' circle centered on $D$. Therefore, a consequence of \cref{thm:bicII-x1} is that:

\begin{corollary}
Over the bic-II family, the locus of $P_1'$ is a circle $\C_1'=[-O_1,r_1']$ not in the pencil of $\C,\C'$, where
\[ r_1'=\frac{R(R^2 + 2 R r - d^2)}{R^2 - d^2} \cdot \]
\label{cor:bicII-exc}
\end{corollary}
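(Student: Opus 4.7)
The plan is to piggy-back on \cref{thm:bicII-x1} via the Incenter-Excenter (Trillium) Lemma, as hinted in the excerpt. Let $D$ be the second intersection of the internal angle bisector at $P_1$ with $\C$. By Trillium, $D$ is the midpoint of segment $X_1 P_1'$, so $P_1' = 2D - X_1$. The proof of \cref{thm:bicII-x1} establishes (via similar triangles and the power of $O'$ with respect to $\C$) a homothety centered at $O'$ sending $D$ to $X_1$; write this as $X_1 - O' = \mu(D - O')$ with signed ratio $\mu = 1 - \tfrac{2Rr}{R^2-d^2}$. Substituting into $P_1' = 2D - X_1$ yields
\[ P_1' - O' = (2-\mu)(D - O'), \]
so $P_1'$ is the image of $D$ under the homothety centered at $O'$ with ratio $2 - \mu = 1 + \tfrac{2Rr}{R^2-d^2} = \tfrac{R^2 + 2Rr - d^2}{R^2-d^2}$.

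As the triangle varies, $D$ sweeps the fixed circle $\C$, so the locus of $P_1'$ is the image of $\C$ under this homothety: a circle of radius $(2-\mu)R = r_1'$ centered at $O' + (2-\mu)(O - O')$. The latter simplifies directly to $-O_1$, yielding the stated $\C_1'$. To verify that $\C_1'$ is not in the pencil of $\C, \C'$, I would compare the $x$-coordinates of the radical axes of $\{\C, \C_1'\}$ and $\{\C, \C'\}$: an elementary calculation gives $R(R+r)/d$ and $(R^2 + d^2 - r^2)/(2d)$ respectively, and equating them forces $(R+r)^2 = d^2$, impossible since $d < R < R + r$.

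The main obstacle is the sign convention for $\mu$: \cref{thm:bicII-x1} extracts only the \emph{unsigned} ratio $O'X_1/O'D$, and the signed version actually flips across Chapple's identity \cref{eqn:chapple} (depending on whether $X_1$ lies between $O'$ and $D$ or on the $P_1$-side of $O'$). The key observation that unlocks the clean proof is that the single uniform formula $\mu = 1 - 2Rr/(R^2-d^2)$ handles both regimes correctly, which one can verify on a representative triangle in each; after that, the rest is pure substitution and simplification.
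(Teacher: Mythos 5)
Your proof is correct and follows essentially the same route the paper intends: the incenter--excenter (Trillium) circle gives $P_1'=2D-X_1$, and composing this with the fixed homothety $X_1-O'=\mu(D-O')$ from \cref{thm:bicII-x1} yields a homothety of ratio $2-\mu$ sending $\C$ to the locus, from which $-O_1$ and $r_1'$ follow. Your explicit attention to the signed ratio $\mu$ and the radical-axis check that $\C_1'$ is not in the pencil are details the paper leaves implicit, and both computations check out.
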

It can be shown via CAS that the two other excenters sweep a sextic. This is illustrated in \cref{fig:bicII-excs}: let $P_i'$ denote the excenter opposite to $P_i$. Though the loci of $P_2'$ and $P_3'$ are distinct and non-conic, as shown above, that of $P_1'$ is a circle. Note that as derived in \cite{odehnal2011-poristic}, when \cref{eqn:chapple} holds (poristic family), $r_1'=2R$.

\begin{figure}
    \centering
    \includegraphics[width=\textwidth]{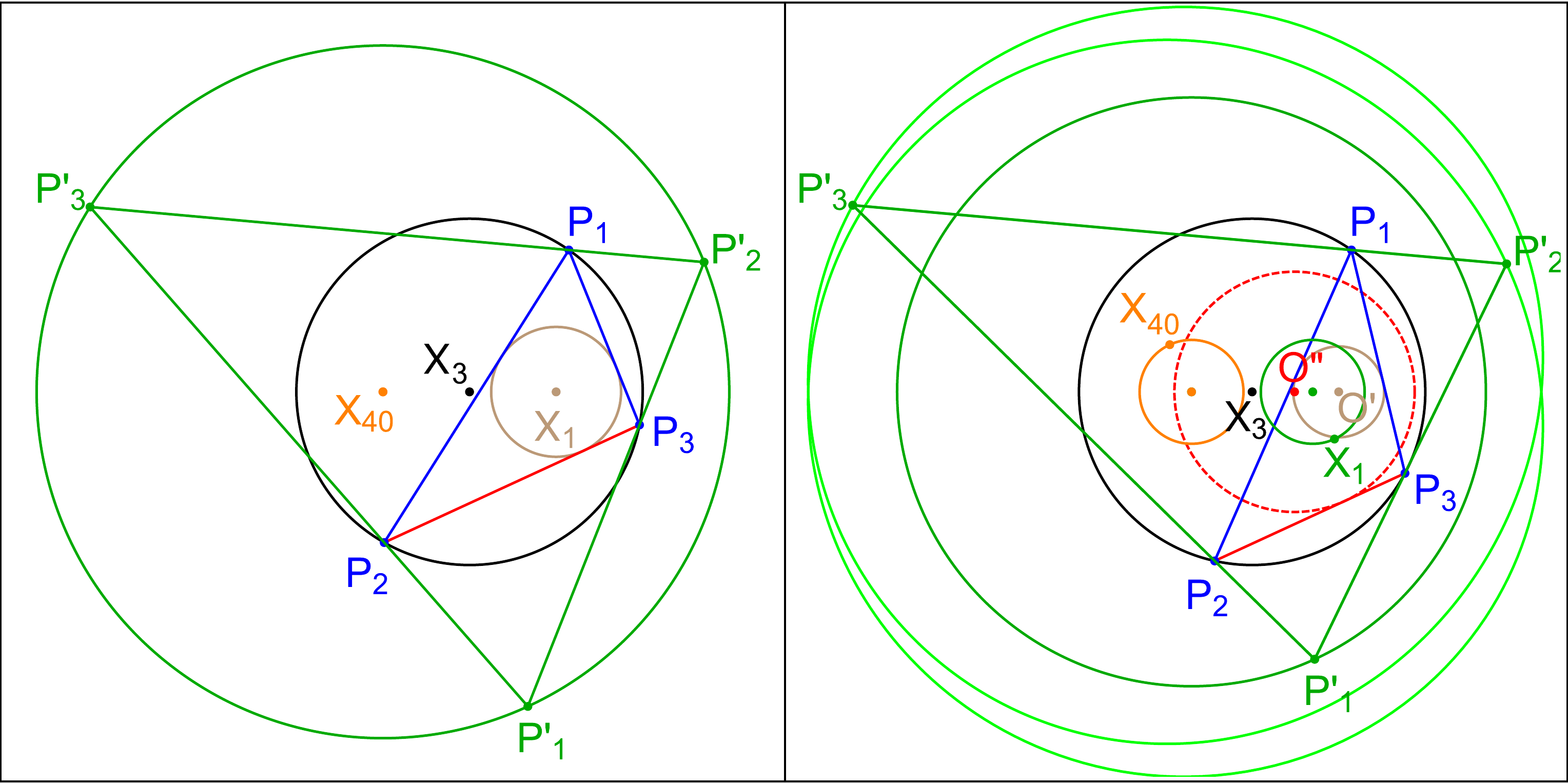}
    \caption{\textbf{Left:} a bic-I (poristic triangle $P_1 P_2 P_3$ and its excentral triangle $P_1' P_2' P_3'$ (dark green). The locus of its vertices is a circle centered on $X_{40}$ (stationary) and with radius twice that of the circumcircle of the original family. \textbf{Right:} a bic-II triangle and its excentral triangle (dark green). The locus of $X_{1}$ (green) and $X_{40}$ (orange) are identical circles (reflected about $X_3$). The locus of $P_1'$ (opposite to $P_1$) is a circle (dark green) concentric with the locus of $X_{40}$. Excenters $P_2'$ and $P_3'$ sweep non-conics (light green) which intersect on the $X_1 X_3$ line. \href{https://youtu.be/qdqIuT-Qk6k}{Video}}
    \label{fig:bicII-excs}
\end{figure}

\section{Review: Confocal Triangle Family}
\label{sec:confocal}
Referring to \cref{fig:inc-exc}, recall a special family of triangles interscribed between a pair of {\em confocal} ellipses $\E,\E'$. Let $a,b$ and $a',b'$ denote their semi-axes. The Cayley condition for any concentric, axis-parallel pair of ellipses to admit a 3-periodic family is $a'/a+b'/b=1$ \cite{cayley1857}. Let $\delta=\sqrt{a^4-a^2b^2+b^4}$ and $c^2=a^2-b^2$. As derived in \cite{garcia2019-incenter}, imposing confocality yields
\begin{equation}
a'=\frac{a\left(\delta-{b}^{2}\right)}{c^2},\;\;\;
b'=\frac{b\left({a}^{2}-\delta\right)}{c^2} \cdot
\label{eqn:confocal}
\end{equation}

\begin{figure}
    \centering
    \includegraphics[width=\textwidth]{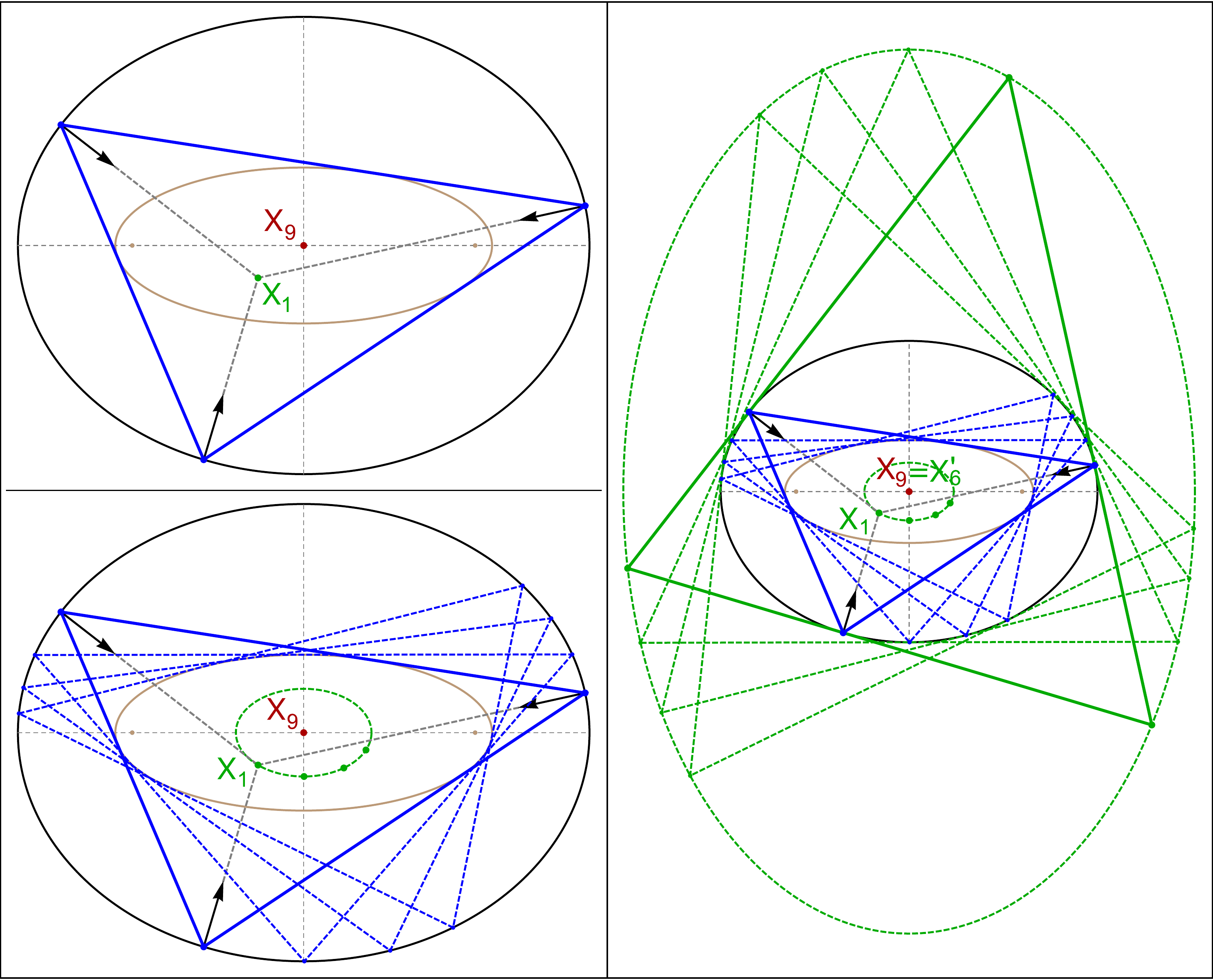}
    \caption{\textbf{Top Left:} A 3-periodic (solid blue) in the conf-I family. Owing to a special property of the confocal pair, internal angles are bisected by ellipse normals (black arrows); these concur at the incenter $X_1$. \textbf{Bottom Left:} superposition of 3-periodics (note: all have the same periemeter). Over the family, (i) the mittenpunkt $X_9$ is stationary and (ii) the incenter $X_1$ sweeps an ellipse (dashed green). \textbf{Right:} the family of excentral triangles (solid green) obtained from conf-I 3-periodics is itself a Ponceletian family. Its vertices (the excenters) sweep a concentric, axis-aligned ellipse (dashed green), whose aspect ratio is the reciprocal of the original incenter locus \cite{garcia2019-incenter}.  \href{https://bit.ly/368BuoM}{Live}}
    \label{fig:inc-exc}
\end{figure}

Referring to \cref{fig:inc-exc} (top left), this family has the special property that normals to $\E$ at the vertices are the bisectors, so triangles can be thought of as billiard (3-periodic) trajectories. Indeed, this system is also known as the $N=3$ {\em elliptic billiard} (see \cite{sergei91} for an introduction of its properties). For consistency with the previous section, we shall call this family ``conf-I'' (confocal family with a single caustic).

\cref{fig:inc-exc} depicts several interesting phenomena concerning conf-I triangles: (i) the mittenpunk $X_9$ is stationary \cite{reznik2020-intelligencer}; (ii) the ratio $r/R$ is conserved as is the sum of cosines \cite{garcia2020-new-properties}, (iii) the locus of the incenter $X_1$ is an ellipse concentric and aligned with $\E,\E'$ \cite{olga14,garcia2019-incenter}; (iv) the common locus of the excenters (vertices of the excentral triangle) is also a concentric, axis-aligned ellipse, whose aspect ratio is the reciprocal of that of (iii) \cite{garcia2019-incenter}. Let $a_1,b_1$ (resp. $a_e,b_e$) denote the semi-axes of (iii) and (iv), respectively. These are given by \cite{garcia2019-incenter}
\[ a_1=\frac{\delta-{b}^{2}}{a},\;\;\;
 b_1=\frac{{a}^{2}-\delta}{b},\;\;\;
 a_e=\frac{{b}^{2}+\delta}{a},\;\;\;
 b_e=\frac{{a}^{2}+\delta}{b} \cdot\]
\noindent Note that these two loci are ellipses with reciprocal aspect ratios, i.e., $a_1/b_1=b_e/a_e$ \cite{garcia2019-incenter}.

\section{Confocals with Two Caustics}
\label{sec:confII}
Referring to \cref{fig:confII}, we consider a variant of the conf-I family, namely triangles $\T=P_1 P_2 P_3$ inscribed in an outer ellipse $\E$ with semi-axes $a,b$. Sides $P_1 P_2$ and $P_1 P_3$ are tangent to an inner, confocal ellipse $\E'$ with semi-axes $a',b'$. In this case, \cref{eqn:confocal} in general does not hold. \cref{lem:pml} ensures that the envelope of $P_2 P_3$ will automatically be a conic in the pencil of $\E,\E'$, i.e., a second elliptic caustic. We therefore call such a system ``conf-II''. 

\begin{figure}
    \centering
    \includegraphics[width=\textwidth]{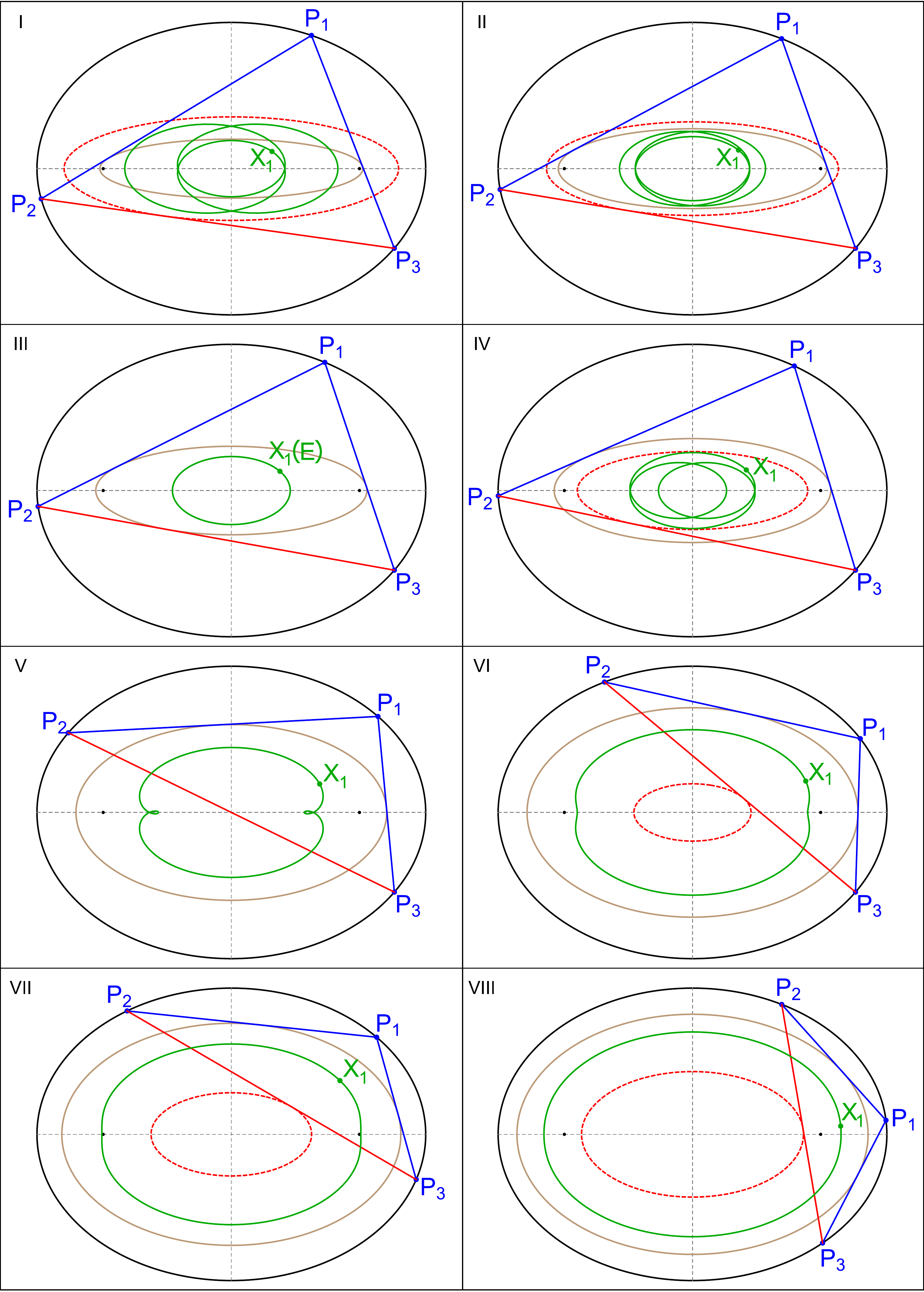}
    \caption{Left-to-right, top-to-bottom: the locus of $X_1$ (green) over conf-II triangles as the confocal caustic $\E'$ (brown) grows toward $\E$ (black). Also shown is the in-pencil envelope $\E''$ of $P_2 P_3$ (dashed red). At inset III, $\E''=\E$ and the locus of $X_1$ an ellipse. At inset V, $\E''$ is the center point. At inset VII, the locus becomes convex (see \cref{prop:confII-x1-convex}). \href{https://youtu.be/C14TL430UBc}{Video 1}, \href{https://youtu.be/kCY6KHFDV2M}{Video 2}}
    \label{fig:confII}
\end{figure}

\begin{remark}
It can be shown that the only non-degenerate conics in the pencil of two confocal ones $\E,\E'$ which are confocal with the pair are $\E$ and $\E'$ themselves. 
\end{remark}

\begin{remark}
It can be shown that over the conf-I family, the elliptic locus of $X_1$ and of the excenters does not belong to the pencil of $\E,\E'$.
\end{remark}

Poncelet's general theorem \cite{centina15} predicts that over the conf-II family, the envelope of $P_2 P_3$ is an ellipse $\E''$ which is concentric, axis-parallel, and in the pencil of $\E,\E'$. Via CAS, obtain:
\begin{proposition}
The semi-axes $a'',b''$ of $\E''$ are given by
\[a''= \frac{|a \zeta| }{ a^2b^2-c^2\lambda },\;\;\; b''=\frac{|b \zeta| }{ a^2b^2+c^2\lambda}, \]
where $\zeta= a^2b^2-(a^2 + b^2)\lambda$ and $\lambda=a^2-a'^2=b^2-b'^2$. \end{proposition}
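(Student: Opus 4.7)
The plan is to exploit two features that the Proposition's hypotheses already give for free: by Poncelet's Main Lemma the envelope $\E''$ is a well-defined conic that does not depend on the choice of the conf-II triangle; and by construction $\E''$ lies in the pencil of $\E,\E'$, which (as noted in the preceding remarks) consists entirely of concentric, axis-parallel central conics. Consequently $\E''$ is automatically of the form $x^2/a''^2 + y^2/b''^2 = 1$, and determining it amounts to finding the two positive scalars $a''$ and $b''$. Since $\E''$ is the same for every triangle in the family, I am free to choose the most symmetric initial vertex in each case.

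First I would take $P_1=(a,0)$, the right vertex of $\E$. The pair $(\E,\E')$ is symmetric about the $x$-axis, so the two tangents from $P_1$ to $\E'$ are reflections of each other and the opposite side $P_2P_3$ is a vertical chord $x=x_0$. Tangency of this vertical line to $\E''$ forces $a''=|x_0|$. To compute $x_0$, parametrise the tangent through $P_1$ as $y=m(x-a)$; the tangency condition with $\E'$ yields $m^2 a^2 = a'^2 m^2 + b'^2$, i.e.\ $m^2 = b'^2/\lambda$. Substituting into the equation of $\E$ gives a quadratic in $x$ having $x=a$ as a known root (because $P_1\in\E$), so Vieta's formula produces $x_0 = a(a^2m^2-b^2)/(a^2m^2+b^2)$. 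Using $a'^2=a^2-\lambda$, $b'^2=b^2-\lambda$, and $c^2=a^2-b^2$, a short simplification rewrites this as $x_0 = a\zeta/(a^2b^2-c^2\lambda)$, which is the first stated formula.

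For the second semi-axis I would repeat the argument with $P_1=(0,b)$, the top vertex of $\E$. Now $y$-axis symmetry forces $P_2P_3$ to be a horizontal chord $y=y_0$, so $b''=|y_0|$. Writing the tangents through $P_1$ as $x=m(y-b)$, the tangency condition with $\E'$ becomes $m^2 b^2 = a'^2 + b'^2 m^2$, whence $m^2 = a'^2/\lambda$. Substituting into $\E$, eliminating the known root $y=b$, and simplifying yields $y_0 = b\zeta/(a^2b^2+c^2\lambda)$. The sign flip of the $c^2\lambda$ term in the denominator is precisely what one expects from the interchange of roles of $a,b$ combined with $c^2=a^2-b^2$ (not symmetric under $a\leftrightarrow b$). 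Taking absolute values gives the two stated formulas.

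The main, and really only, obstacle is bookkeeping. One must keep the substitutions $a'^2=a^2-\lambda$, $b'^2=b^2-\lambda$, and the sign of $c^2$ straight in order to recognise the numerator $\zeta = a^2b^2-(a^2+b^2)\lambda$ in both computations and the asymmetric denominators $a^2b^2\mp c^2\lambda$. One should also verify that the extracted root of each quadratic is indeed $P_1$ itself (immediate, since $P_1$ lies on both $\E$ and the tangent line), so that Vieta applies cleanly. Everything else is elementary algebra, and a CAS check matches the proposition as stated.
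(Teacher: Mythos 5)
Your proposal is correct, and it is in fact more informative than what the paper provides: the paper states this proposition with no derivation beyond ``via CAS,'' whereas you give a genuine elementary argument. Your key observation --- that the envelope $\E''$ is a fixed, concentric, axis-parallel conic (by \cref{lem:pml} and the structure of the pencil of two concentric axis-parallel ellipses), so one may evaluate it at the two symmetric positions $P_1=(a,0)$ and $P_1=(0,b)$, where the free side becomes a vertical, resp.\ horizontal, tangent line to $\E''$ --- reduces everything to locating one chord in each case. I checked the algebra: with $P_1=(a,0)$ the tangency condition gives $m^2=b'^2/\lambda$, Vieta's formula gives $x_0=a(a^2m^2-b^2)/(a^2m^2+b^2)$, and substituting $b'^2=b^2-\lambda$ yields $a^2m^2-b^2=\zeta/\lambda$ and $a^2m^2+b^2=(a^2b^2-c^2\lambda)/\lambda$, hence $x_0=a\zeta/(a^2b^2-c^2\lambda)$; the computation for $P_1=(0,b)$ is the mirror image and produces the $+c^2\lambda$ denominator exactly as you predict. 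The only point worth making explicit is the one you already flag implicitly: a vertical tangent to a concentric axis-parallel conic $x^2/a''^2+y^2/b''^2=1$ must be $x=\pm a''$, which is what converts ``$P_2P_3$ is the line $x=x_0$'' into ``$a''=|x_0|$'' (and this presumes the nondegenerate elliptic case, which is the regime of the proposition). What your route buys is a human-verifiable proof and a transparent explanation of the asymmetry $a^2b^2\mp c^2\lambda$ in the two denominators; what the CAS route buys is nothing beyond confirmation, so your version is strictly preferable as a proof.
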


\begin{remark}
In the conf-I family, the outer (resp. inner) ellipse is obtained when $\lambda=0$ (resp. $\lambda=a^2b^2( 2\delta-a^2-b^2)/c^4 $). Note that in this case, the envelope of $P_2 P_3$ is confocal (it is the inner ellipse itself).
\end{remark}

\begin{proposition}
Over the conf-II family, the locus of $X_1$ is only an ellipse in the conf-I configuration, i.e., the confocal pair $\E,\E'$ admits a 3-periodic family.
\label{prop:confII-x1}
\end{proposition}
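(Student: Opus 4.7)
The plan is to establish the non-trivial (``only if'') direction: assuming the locus is a conic, deduce the Cayley condition $a'/a + b'/b = 1$. The converse is the classical result of \cite{olga14, garcia2019-incenter}.

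First I parametrize the family by the position of the distinguished vertex $P_1(t) = (a\cos t, b\sin t)$, at which both tangent sides meet $\E'$. The two tangent lines from $P_1$ to $\E'$ have slopes given by a quadratic, whose solutions determine the second intersections $P_2(t), P_3(t)$ with $\E$ as closed-form rational expressions in $\cos t, \sin t$ (cf.\ \cref{app:vtx-param}). The side-lengths $\ell_i(t) = |P_jP_k|$ and the incenter
\[ X_1(t) = \frac{\ell_1 P_1 + \ell_2 P_2 + \ell_3 P_3}{\ell_1 + \ell_2 + \ell_3} \]
then become explicit (if cumbersome) functions of $t$ and the pencil parameter $\lambda = a^2 - a'^2 = b^2 - b'^2$.

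Next I use symmetry to pin down the shape of any conic locus. Both coordinate reflections $x \mapsto -x$ and $y \mapsto -y$ preserve $\E$ and $\E'$, and hence map the conf-II family to itself, so the locus of $X_1$ is symmetric about both axes and, in particular, centered at the origin. Since $X_1$ lies in the bounded region enclosed by $\E$, any conic locus is therefore a concentric, axis-aligned ellipse $x^2/p^2 + y^2/q^2 = 1$. Evaluating $X_1$ at $t = 0$ and $t = \pi/2$ produces two distinguished points on the $x$- and $y$-axes respectively, whose coordinates fix the candidate semi-axes $p$ and $q$ as explicit rational expressions in $a, b, \lambda$.

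The core of the argument is then an ellipse-identity: substitute $X_1(t)$ into $x^2/p^2 + y^2/q^2 - 1$ and require this to vanish identically in $t$. After rationalizing the side-length square roots and clearing denominators, the left-hand side becomes a trigonometric polynomial in $t$ whose Fourier coefficients are polynomials in $\lambda$, with $a, b$ regarded as parameters. Demanding that all these coefficients vanish gives an overdetermined system in $\lambda$ alone; using CAS I expect this system to have only two roots, the degenerate $\lambda = 0$ (where $\E' = \E$) and $\lambda_{\mathrm{I}} := a^2 b^2 (2\delta - a^2 - b^2)/c^4$, which by the Remark preceding the proposition is precisely the conf-I value, equivalent to $a'/a + b'/b = 1$. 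The main obstacle will be the symbolic size of the identity: the side-lengths $\ell_i$ are square roots of quartics in $\cos t, \sin t$, so the computation must be arranged (working with $\ell_i^2$, verifying squared identities, then tracking signs) so that the expected factor $\lambda - \lambda_{\mathrm{I}}$ appears cleanly rather than buried inside a large irreducible-looking residue.
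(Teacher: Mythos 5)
Your overall strategy—parametrize the family explicitly, reduce ``locus is a conic'' to a condition on the pencil parameter $\lambda$, and let a CAS certify that the only admissible value is $\lambda_{\mathrm I}=a^2b^2(2\delta-a^2-b^2)/c^4$—is sound and matches the paper's in spirit, but your conic-detection step is genuinely different. The paper works with the polynomial implicit relations $f(x,x_1,\lambda)=0$, $g(y,y_1,\lambda)=0$ obtained after eliminating the side-length radicals, and then certifies non-conicity by computing the \emph{affine curvature} of the locus $X_1(\lambda)$ and showing it is constant only at $\lambda=\lambda_{\mathrm I}$; constancy of affine curvature characterizes conic arcs without ever having to name the candidate conic. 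You instead use the two axis reflections (which do preserve the conf-II family, since $\E,\E'$ are concentric and axis-aligned, so this step is legitimate) to force any conic locus to be a concentric axis-aligned ellipse, pin down its semi-axes from $X_1(0)$ and $X_1(\pi/2)$, and then demand that the resulting ellipse identity hold for all $t$. Your route buys a concrete, checkable overdetermined system and automatically produces the candidate ellipse (which in the paper appears as $\mathcal I_\lambda$ with semi-axes $\alpha,\beta$); the paper's route buys independence from any guessed candidate and sidesteps the degenerate situations in which two sample points might fail to determine the conic. Two cautions for your version: (i) after rationalizing the $\ell_i$ you must verify that the surviving root is not an artifact of squaring (track signs, as you note); and (ii) the claim that the coefficient system has \emph{only} the roots $\lambda=0$ and $\lambda=\lambda_{\mathrm I}$ is the entire content of the proposition and must actually be exhibited (e.g., by showing the gcd of the coefficient polynomials in $\lambda$ factors as stated), exactly as the paper must exhibit non-constancy of the affine curvature; neither proof is complete until that symbolic verification is displayed.
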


\begin{proof} 
Consider the triangle $P_1P_2P_3$ parametrized in \cref{subsec:Conf-II}.
The locus $X_1(\lambda)$ is given by
\begin{gather*}
X_1(\lambda)=[x,y]= \frac{s_1P_1+s_2P_2+s_3P_3}{s_1+s_2+s_3},\\
s_1=|P_2-P_3|,\;\; s_2=|P_2-P_3|,\;\; s_3=|P_1-P_2|. \end{gather*}
Via CAS, obtain the following implicit equation for $X_1(\lambda)$
\begin{align*}
 f(x, x_1,\lambda)&=    \left( {a}^{2}{c}^{4}{\lambda}^{2}+2\,{a}^{2}{b}^{2}{c}^{2} \left( {a
}^{2}-2\, x_1^{2} \right) \lambda+{b}^{4}{a}^{6} \right) {x}^{2}\\
&+  x_1\left(  2(a^2b^2 - c^2\lambda) (a^4b^2 - 2b^2c^2 x_1^2 + a^2c^2\lambda) \right) x\\
& -a^2 x_1^2\left(3 a^4 b^4    - 4 b^4 c^2  x_1^2 - 4 a^4 b^2 \lambda + 6 a^2 b^2 c^2 \lambda - c^4 \lambda^2\right) =0,\\
g(y, y_1,\lambda)&=
{b}^{2} \left( {a}^{4}{b}^{4}-2\,{a}^{2}{c}^{2}\lambda\,{b}^{2}+4\,{a}
^{2}{c}^{2}\lambda\, y_1^{2}+{\lambda}^{2}{c}^{4} \right) {y}^{2
}\\
&+2\, y_1\, \left( {a}^{2}{b}^{2}+\lambda\,{c}^{2} \right) 
 \left( {a}^{2}{b}^{4}+2\,{a}^{2}{c}^{2} y_1^{2}-{b}^{2}{c}^{2}
\lambda \right) y\\
&- y_1^{2}{b}^{2} \left( 3\,{a}^{4}{b}^{4}+4\,{a
}^{4}{c}^{2} y_1^{2}-4\,{a}^{4}{b}^{2}\lambda-2\,{a}^{2}{c}^{2}
{b}^{2}\lambda  - {c^4\lambda}^{2}   \right) =0,\\
E( x_1, y_1)&=   b^2  x_1^2+ a^2 y_1^2 -a^2b^2=0.
\end{align*}

Consider the ellipse $\mathcal{I}_\lambda(x,y)=x^2/\alpha^2+y^2/\beta^2-1=0$, where
\begin{align*}
    \alpha&= {\frac {a}{{a}^{2}{b}^{2}-\lambda\,{c}^{2}} \left( 2\,{b}^{3}\sqrt {{b
}^{2}-\lambda}+{c}^{2} \left( {b}^{2}-\lambda \right) -{b}^{4}
 \right) },\\
 \beta&={\frac {b}{{a}^{2}{b}^{2}+\lambda\,{c}^{2}} \left( 2\,{a}^{3}\sqrt {{a
}^{2}-\lambda}-{c}^{2} \left( {a}^{2}-\lambda \right) -{a}^{4}
 \right) }.
\end{align*} 

It turns out that the parametrization of $X_1$ is a rational expression of the form
\[ \left[  \frac{ x_1(a_{20} x_1^2+a_{02} y_1^2)}{c_{20} x_1^2+c_{02} y_1^2},  \frac{ y_1(b_{20} x_1^2+b_{02} y_1^2)}{d_{20} x_1^2+d_{02} y_1^2}\right],\]
iff $\lambda=a^2b^2(2\delta -a^2 - b^2  )/c^4$. This is precisely the case when the triangle $P_1P_2P_3$ is a billiard orbit.


The trace of $X_1(\lambda)$ is the ellipse $\mathcal{I}_\lambda(x,y)=0$ iff $P_1P_2P_3$ is a billiard orbit. This is obtained computing the affine curvature of $X_1(\lambda)$ and showing that it is constant only for the critical value $\lambda=a^2b^2(2\delta -a^2 - b^2  )/c^4$.
\end{proof}

\begin{proposition}
Over the conf-II family, the locus of $X_1$ is convex if $(a')^2>a^2-\lambda_o$, where $\lambda_o$ is the largest non-negative root of the following polynomial which is less than $a$ (in general there are three real roots):
\begin{align*} 
& c^8 \lambda_o^5 + b^2 (3 a^4 - 2 a^2 b^2 + 3 b^4) c^4 \lambda_o^4 +\\
&  2 a^2 b^4 (a^4 + 5 a^2 b^2 - 2 b^4) c^2  \lambda_o^3 + 2 a^4 b^6 (a^4 + b^4) \lambda_o^2 -\\
& b^8 a^6 (11 a^2 - 4 b^2) \lambda_o + 3 b^{10} a^8,
\end{align*}
where $c^2=a^2-b^2$.
\label{prop:confII-x1-convex}
\end{proposition}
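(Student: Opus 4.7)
The strategy is to parametrize the $X_1$-locus by having $P_1=(a\cos\theta,b\sin\theta)$ traverse the outer ellipse $\E$, and then reduce convexity to a curvature sign condition. Using the conf-II vertex parametrization of \cref{subsec:Conf-II}, the two chords through $P_1$ tangent to $\E'$ yield $P_2,P_3$ as explicit algebraic functions of $(x_1,y_1)$ and $\lambda$, and the barycentric incenter formula used in the proof of \cref{prop:confII-x1} then produces $X_1(\theta;\lambda)=(x(\theta),y(\theta))$. Convexity of the (simple, smooth, closed) locus is equivalent to the signed curvature $\kappa(\theta)=\dot x\ddot y-\dot y\ddot x$ keeping constant sign, and since the conf-I locus at $\lambda_{\ast}=a^2b^2(2\delta-a^2-b^2)/c^4$ is a genuine ellipse it is convex there; hence the convex regime is the connected component of the $\lambda$-axis containing $\lambda_{\ast}$ on which $\kappa$ has no zero.

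The boundary of that component is the set of $\lambda$ for which $\kappa$ first acquires a double zero, i.e., where the system
\[ \kappa(\theta;\lambda)=0,\qquad \partial_\theta\kappa(\theta;\lambda)=0 \]
becomes consistent. To perform the elimination I would pass to the purely algebraic parametrization: keep $P_1=(x_1,y_1)$ constrained by $E(x_1,y_1)=b^2x_1^2+a^2y_1^2-a^2b^2=0$, rationalize the square roots appearing in $X_1$, and compute a Gr\"obner basis of the ideal generated by $\kappa$, $\partial_{x_1}\kappa$, and $E$ under a lex order eliminating $x_1,y_1$. The eliminant in $\lambda$ factors; after removing the spurious pieces arising from the axis-vertex positions of $P_1$ (where $\kappa$ vanishes by symmetry), from degenerations of the envelope $\E''$, and from the conf-I factor $\lambda-\lambda_{\ast}$, the displayed quintic remains. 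Its largest real root $\lambda_o$ in the admissible window is the transition value, and the condition $(a')^2>a^2-\lambda_o$ is just $\lambda<\lambda_o$, the side of the threshold on which $\lambda_{\ast}$ lies.

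The main obstacle is symbolic bulk: the square roots in the chord construction make $\kappa$ and $\partial_\theta\kappa$ very large, and careless rationalization renders the Gr\"obner computation infeasible. The delicate final step is the identification, among the factors of the eliminant, of the one corresponding to a genuine inflection; this requires a direct sign check of $\kappa$ at a well-chosen $(\theta,\lambda)$ on each side of every candidate root, both to discard parasitic factors and to confirm that the convex side of the quintic's largest relevant root is indeed the side containing the conf-I configuration.
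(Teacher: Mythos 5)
The paper records this proposition without proof (it is a CAS-derived fact), so there is no argument of record to compare against. Your overall strategy --- parametrize $X_1$ by $P_1\in\E$ via the chord construction of \cref{subsec:Conf-II}, reduce convexity of the closed locus to the sign of the curvature numerator $\dot x\ddot y-\dot y\ddot x$, and locate the transition values of $\lambda$ by eliminating the vertex variables from $\kappa=\partial_\theta\kappa=0$ together with $E(x_1,y_1)=0$ --- is the natural route and matches the methodology the paper uses for the companion result \cref{prop:confII-x1} (an affine-curvature computation). Two mild caveats: equating convexity with constant sign of $\dot x\ddot y-\dot y\ddot x$ presupposes that the parametrization is regular and traces the locus once, which should be verified rather than assumed.

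The genuine problem is your anchoring step. You take the convex regime to be ``the connected component of the $\lambda$-axis containing $\lambda_\ast$'' and assert that $\lambda<\lambda_o$ is ``the side of the threshold on which $\lambda_\ast$ lies.'' Both claims are contradicted by the paper's own data. In \cref{fig:confII} the caustic grows monotonically from inset I to VIII, so $\lambda=a^2-a'^2$ decreases; the conf-I ellipse occurs at inset III ($\lambda=\lambda_\ast$), the point-envelope case of \cref{cor:confII-n4} at inset V ($\lambda=a^2b^2/(a^2+b^2)$, which one checks is strictly less than $\lambda_\ast$), and convexity sets in only at inset VII ($\lambda=\lambda_o$). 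Hence $\lambda_o<\lambda_\ast$: the conf-I configuration is a convex exception lying \emph{outside} the interval $\lambda<\lambda_o$, separated from it by a band of non-convex loci --- which is precisely why the proposition is stated as a sufficient condition rather than an equivalence. Your heuristic would therefore select the wrong root, or the wrong side of the right one; only the final numerical sign check could rescue the conclusion, at which point the connected-component argument is doing no work. Relatedly, your plan to discard as ``spurious'' the factors coming from axis positions of $P_1$ ``where $\kappa$ vanishes by symmetry'' is backwards: the reflection symmetry about the $x$-axis forces $\partial_\theta\kappa=0$ there, not $\kappa=0$, so the on-axis solutions of the system are genuine candidates for the birth of the first inflection pair; discarding them risks throwing away exactly the factor that yields the displayed quintic.
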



\subsection*{Studying the excenters}

As before, let $P_i'$ denote the excenter opposite to $P_i$.
Let $(a')^2=a^2-\lambda$ and $(b')^2=b^2-\lambda$ the semi-axes of the confocal ellipse. Referring to \cref{fig:confII-x1-excs}:
\begin{theorem}
Over the conf-II family, both $P_2'$ and $P_3'$ sweep a single ellipse $\E_e$, which is both concentric and axis-aligned with the Poncelet pair of ellipses. Its semi-axes $a_e,b_e$ are given by
\[ [a_e,b_e]  =\sqrt{k} \left[{\frac{{a}}{
 {a}^{2}{b}^{2} + c^2 \lambda
 }}, {\frac {{b} }{
{a}^{2}{b}^{2}  -c^2 \lambda}} \right], \]
where $k= \left(  \left( a+b \right) ^{2}\lambda+{a}^{2}{b}^{2}
 \right)  \left(  \left( a-b \right) ^{2}\lambda+{a}^{2}{b}^{2} \right)$.
\label{thm:confII-exc}
\end{theorem}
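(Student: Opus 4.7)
My plan is to reduce the theorem to a single locus computation by symmetry, and then verify that the resulting parametric curve is the claimed ellipse via CAS. Since both $P_1P_2$ and $P_1P_3$ are tangent to the confocal caustic $\E'$ while $P_2P_3$ envelops the third in-pencil caustic $\E''$ guaranteed by \cref{lem:pml}, the conf-II family is invariant under the relabeling $P_2 \leftrightarrow P_3$ that fixes $P_1$. This relabeling exchanges the excenter opposite $P_2$ with the excenter opposite $P_3$, so the two loci coincide as sets, and it suffices to prove the statement for $P_2'$ alone.

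For the computation I would set $P_1 = (a\cos u, b\sin u)$ and let $P_2(u), P_3(u)$ be the two remaining intersections with $\E$ of the lines through $P_1$ tangent to $\E'$; each is a rational function of $\cos u, \sin u$ and $a,b,\lambda$, obtainable from the formulas in \cref{app:vtx-param}. Using the standard barycentric expression for the excenter opposite the second vertex,
\[ P_2' \;=\; \frac{s_1 P_1 - s_2 P_2 + s_3 P_3}{s_1 - s_2 + s_3}, \qquad s_1=|P_2-P_3|,\; s_2=|P_1-P_3|,\; s_3=|P_1-P_2|, \]
one obtains $P_2' = (x(u), y(u))$. The proof then reduces to the polynomial identity
\[ b_e^2\, x(u)^2 + a_e^2\, y(u)^2 - a_e^2 b_e^2 \;\equiv\; 0, \]
with $a_e, b_e$ as in the statement. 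Concentricity and axis-alignment with $\E, \E'$ are forced by the $(x,y)\mapsto(\pm x, \pm y)$ symmetries of the whole construction, and together with boundedness of the locus this upgrades the conic to an ellipse.

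The main obstacle is algebraic rather than conceptual. Each side length $s_i$ is the square root of a rational function of $\cos u, \sin u$, so the three radicals must be rationalized before anything can be fed to a resultant or Gr\"obner basis routine. In practice I would compute the $s_i^2$ first, clear the denominator $s_1 - s_2 + s_3$ by successive conjugation, and only then substitute into the candidate ellipse equation, verifying that the result vanishes identically as a trigonometric polynomial in $u$. The explicit values of $a_e, b_e$ themselves can be guessed by evaluating $P_2'$ at $u=0$ and $u=\pi/2$ (where $P_1$ lies on an axis of $\E$, so $P_2,P_3$ are reflections across that axis and $P_2'$ lies on the other axis), and then confirmed globally by the identity above; matching coefficients at the end isolates the factor $k$ and the asymmetric denominators $a^2b^2 \pm c^2\lambda$.
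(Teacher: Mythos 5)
Your proposal is correct and follows essentially the same route as the paper: parametrize $P_2'$ (and $P_3'$) via the side-length-weighted barycentric formula for excenters applied to the conf-II vertex parametrization, then verify with a CAS that the result satisfies $x^2/a_e^2+y^2/b_e^2=1$. The only addition is your $P_2\leftrightarrow P_3$ relabeling symmetry, which lets you check one excenter instead of two (the paper simply computes both), and your remarks on rationalizing the radicals and guessing $a_e,b_e$ from special positions are sensible implementation details rather than a different argument.
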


\begin{proof}
The excenter  $P_2'(\lambda)$ is parametrized by
\begin{gather*}
P_2'(\lambda)=  \frac{s_1P_1-s_2P_2+s_3P_3}{s_1-s_2+s_3},\\
s_1=|P_2-P_3|,\;\; s_2=|P_2-P_3|,\;\;s_3=|P_1-P_2|. \end{gather*}
Analogously, 
\begin{align*}
P_3'(\lambda)&= \frac{s_1 P_1+s_2 P_2-s_3 P_3}{s_1+s_2-s_3}. \end{align*}

Defining the ellipse by $\mathcal{E}_\lambda(x,y)=x^2/a_e^2+y^2/b_e^2-1=0$, with a CAS obtain that $\mathcal{E}_\lambda(P_2')=\mathcal{E}_\lambda(P_3')=0$.
\end{proof}

\begin{remark}
It can also be shown (via CAS) that over the conf-II family, the locus of $P_1'$ is a curve of degree 6.
\end{remark}

\subsection{Interesting excentral ellipses}

The two corollaries below are illustrated in \cref{fig:confII-n4-n6}.

\begin{corollary}
Let $a,b$ be the semi-axes of $\E$. Over the conf-II family, the envelope of $P_2 P_3$ is a point at the center of $\E$ for the following choice of $\E'$ semi-axes
\[ a'=\frac{a^2}{\sqrt{a^2+b^2}}, \;\; b'=\frac{b^2}{\sqrt{a^2+b^2}} \cdot \]
In such a case, the locus of $P_2'$ (and $P_3'$) is an ellipse with aspect ratio $b/a$.
\label{cor:confII-n4}
\end{corollary}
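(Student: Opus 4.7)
The plan is to exploit the two explicit formulas already at our disposal, namely the expressions for the envelope semi-axes $a'', b''$ in the earlier proposition and for the excentral ellipse semi-axes $a_e, b_e$ in \cref{thm:confII-exc}. From the former, both $a''$ and $b''$ share the common numerator factor $\zeta = a^2b^2 - (a^2+b^2)\lambda$, while their denominators $a^2b^2 \mp c^2\lambda$ are nonzero in the admissible range of $\lambda$. Hence the envelope $\E''$ degenerates to a single point precisely when $\zeta=0$, which I would solve to obtain
\[ \lambda = \frac{a^2 b^2}{a^2+b^2}. \]

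The first displayed claim then follows at once from the defining relations $a'^2 = a^2 - \lambda$ and $b'^2 = b^2 - \lambda$: substituting this critical $\lambda$ gives $a'^2 = a^4/(a^2+b^2)$ and $b'^2 = b^4/(a^2+b^2)$, which rearrange to the stated formulas. I would briefly check that $\lambda < \min(a^2,b^2)$ (assuming without loss of generality $b \le a$, this reduces to $a^2 b^2 < b^2(a^2+b^2)$, which is obvious), so that $\E'$ remains a real confocal ellipse strictly nested inside $\E$ and the configuration is geometrically admissible. Also noteworthy is that the envelope collapsing to a single point forces all chords $P_2 P_3$ to pass through the common center, which recovers the known fact that this corresponds to an even-periodic limiting configuration.

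For the second claim I would substitute the same $\lambda$ back into \cref{thm:confII-exc}. Using $c^2 = a^2 - b^2$, the quantities $a^2 b^2 \pm c^2 \lambda$ simplify to $\tfrac{2a^4 b^2}{a^2+b^2}$ and $\tfrac{2a^2 b^4}{a^2+b^2}$ respectively, and the common factor $\sqrt{k}$ cancels when forming the ratio. Thus
\[ \frac{b_e}{a_e} \;=\; \frac{b}{a}\cdot\frac{a^2b^2+c^2\lambda}{a^2b^2-c^2\lambda} \;=\; \frac{b}{a}\cdot\frac{a^2}{b^2} \;=\; \frac{a}{b}, \]
so (with $b<a$) the minor-to-major aspect ratio of $\E_e$ equals $b/a$, as claimed.

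I do not anticipate any genuine obstacle: the entire argument is a double substitution into formulas already established in the paper, with care needed only for sign bookkeeping in $\pm c^2\lambda$ and for verifying admissibility of the critical $\lambda$. In particular no CAS is required for this corollary, since the simplification in the final step reduces to a single cancellation.
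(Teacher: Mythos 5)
Your proof is correct, and for the first assertion it takes a genuinely different (and more self-contained) route than the paper. The paper identifies $a',b'$ as the semi-axes of the confocal caustic that admits an $N=4$ Poncelet family and then invokes the external fact that the diagonals of an even-$N$ Poncelet polygon in a concentric, axis-parallel pair pass through the common center, so that $P_2P_3$ always contains the center; you instead read the degeneration directly off the paper's own formula for the envelope semi-axes, observing that $a''$ and $b''$ share the numerator factor $\zeta=a^2b^2-(a^2+b^2)\lambda$ while their denominators stay nonzero, and that the concentricity of $\E''$ forces the degenerate envelope to be the center of $\E$. Your route avoids citing the $N=4$ Cayley condition and the antipodality result, at the cost of the geometric interpretation (this configuration being ``half'' of the $N=4$ billiard family) that the paper reuses in the subsequent barycenter proposition. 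The second assertion is handled essentially identically in both: substitute $\lambda=a^2b^2/(a^2+b^2)$ into \cref{thm:confII-exc}; your simplification of $a^2b^2\pm c^2\lambda$ to $2a^4b^2/(a^2+b^2)$ and $2a^2b^4/(a^2+b^2)$ is correct, the factor $\sqrt{k}$ cancels, and $a_e/b_e=b/a$ follows. Your admissibility check $\lambda<b^2$ is a small but welcome addition that the paper omits.
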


\begin{proof}
The expressions for $a',b'$ above are those of the confocal caustic $\E'$ which with $\E$ admits a 4-periodic family \cite[App. B.3]{garcia2020-self-intersected}. Indeed, opposite vertices of an even-N Poncelet family in a concentric, axis-parallel ellipse pair are antipodal with respect to the center, i.e., the major diagonals pass through the common center \cite{halbeisen2015}.

Under $a',b'$ in the proposition, $\lambda$ in \cref{thm:confII-exc} is given by
\[ \lambda=\frac{a^2b^2}{a^2+b^2} \cdot \]

Carrying out the simplifications, obtain $a_e/b_e = b/a$.
\end{proof}

\begin{corollary}
The locus of $P_2'$ and $P_3'$ is a circle concentric with the $\E,\E'$ 
when
\[ a'= \frac {a\sqrt {{a}(a+2\,b)}}{a+b},\;\;\;b'= {\frac {b\sqrt {b \left( 2\,a+b \right) }}{a+b}},\;\;\;\lambda=\left(\frac{a b}{a+b}\right)^2 \cdot\]
\label{cor:confII-n6}
\end{corollary}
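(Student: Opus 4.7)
The approach is to apply \cref{thm:confII-exc} and impose the circularity condition $a_e = b_e$. Since the factor $\sqrt{k}$ is common to both semi-axes in the theorem, the condition reduces to
\[ a\bigl(a^{2}b^{2} - c^{2}\lambda\bigr) \;=\; b\bigl(a^{2}b^{2} + c^{2}\lambda\bigr), \]
provided both denominators are positive (which I would briefly check is compatible with $\E''$ being an ellipse in the pencil and $\E'$ being a valid interior caustic).

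Next I would solve this as a linear equation in $\lambda$. Expanding and collecting yields
\[ a^{2}b^{2}(a-b) \;=\; (a+b)\,c^{2}\,\lambda. \]
Using $c^{2} = a^{2}-b^{2} = (a-b)(a+b)$, the factor $(a-b)$ cancels (assuming $a\neq b$; the degenerate case $a=b$ is already the confocal pair collapsed to concentric circles and needs no argument). This immediately produces
\[ \lambda \;=\; \left(\frac{ab}{a+b}\right)^{2}, \]
matching the stated value of $\lambda$.

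To finish, I would recover $a'$ and $b'$ from the confocal relations $\lambda = a^{2}-(a')^{2} = b^{2}-(b')^{2}$. Substituting the above $\lambda$ gives
\[ (a')^{2} \;=\; a^{2} - \frac{a^{2}b^{2}}{(a+b)^{2}} \;=\; \frac{a^{3}(a+2b)}{(a+b)^{2}}, \qquad (b')^{2} \;=\; \frac{b^{3}(2a+b)}{(a+b)^{2}}, \]
and taking positive square roots reproduces the formulas in the statement. The only obstacle is cosmetic: verifying that the critical $\lambda$ lies in the admissible range (so that $\E'$ is a genuine in-pencil ellipse strictly inside $\E$, with Poncelet triangles existing), which amounts to checking $0 < \lambda < \min(a^{2},b^{2})$, i.e. $(ab/(a+b))^{2} < b^{2}$, equivalent to $a < a+b$, which is immediate.
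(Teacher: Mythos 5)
Your proposal is correct and follows essentially the same route the paper implies: apply \cref{thm:confII-exc} and impose $a_e=b_e$, which is linear in $\lambda$ and yields $\lambda=(ab/(a+b))^2$, from which $a',b'$ follow via $\lambda=a^2-(a')^2=b^2-(b')^2$. The only difference is direction — you derive $\lambda$ from the circularity condition rather than verifying the stated $N=6$ caustic values, but the computation is identical and your admissibility check is sound.
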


Note that in such a case, $a',b'$ are precisely the semi-axes of the elliptic billiard $N=6$ family \cite[Appendix B.5] {garcia2020-self-intersected}. 

\begin{figure}
    \centering
    \includegraphics[width=\textwidth]{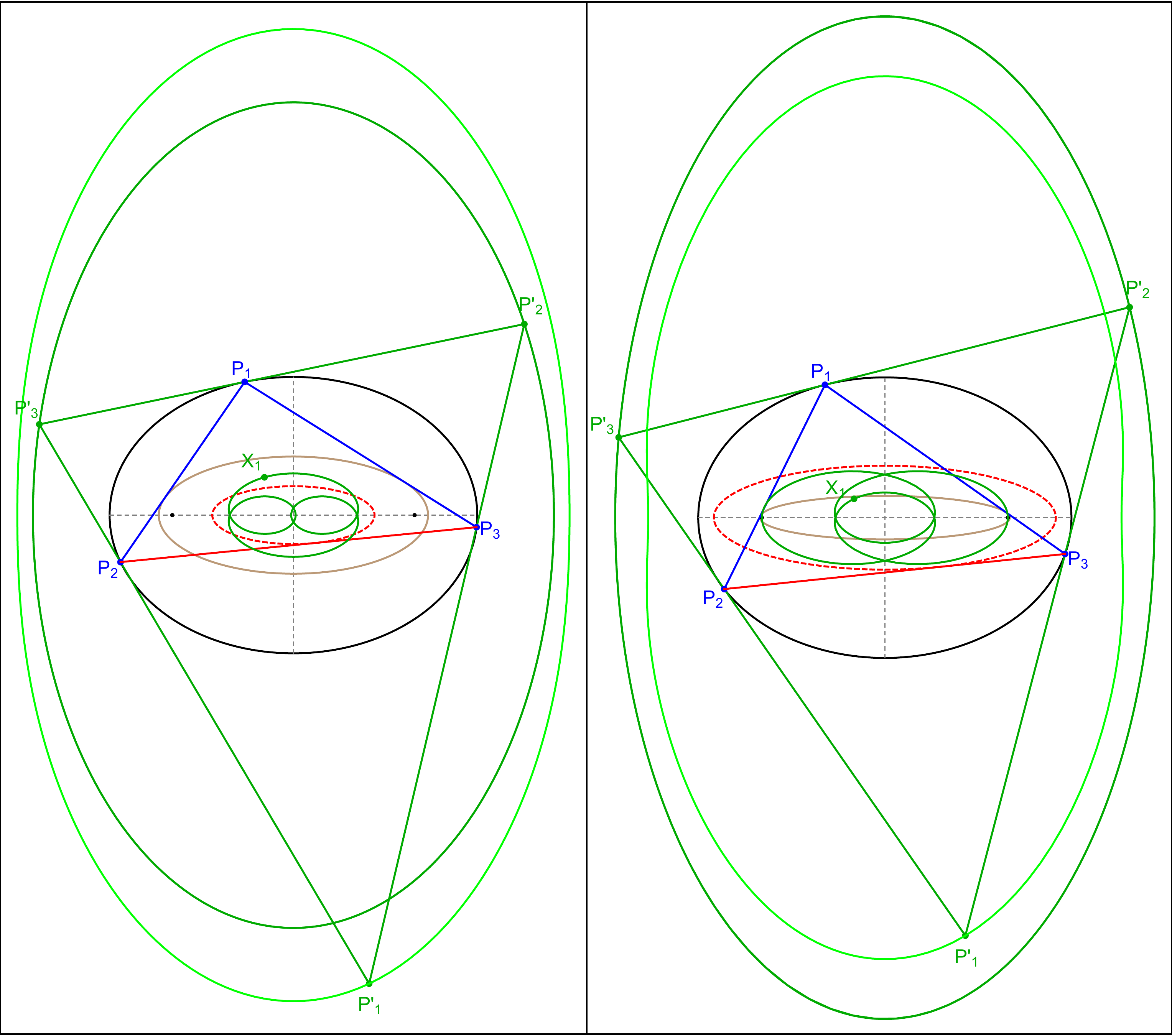}
    \caption{The non-elliptic locus of the incenter $X_1$ and that of the vertices $P_i'$ of the excentral triangle (dark green) over a conf-II family with a (i) larger (left), and (ii) smaller (right) caustic than the $N=3$, conf-I one. Remarkably, excenters $P_2'$ and $P_3'$ survive the asymmetry sweeping a single ellipse (dark green), while $P_1'$ sweeps a non-conic (light green) interior (resp. exterior) to the former one. \href{https://youtu.be/AisIrfn4IGg}{Video}}
    \label{fig:confII-x1-excs}
\end{figure}

\begin{figure}
    \centering
    \includegraphics[width=\textwidth]{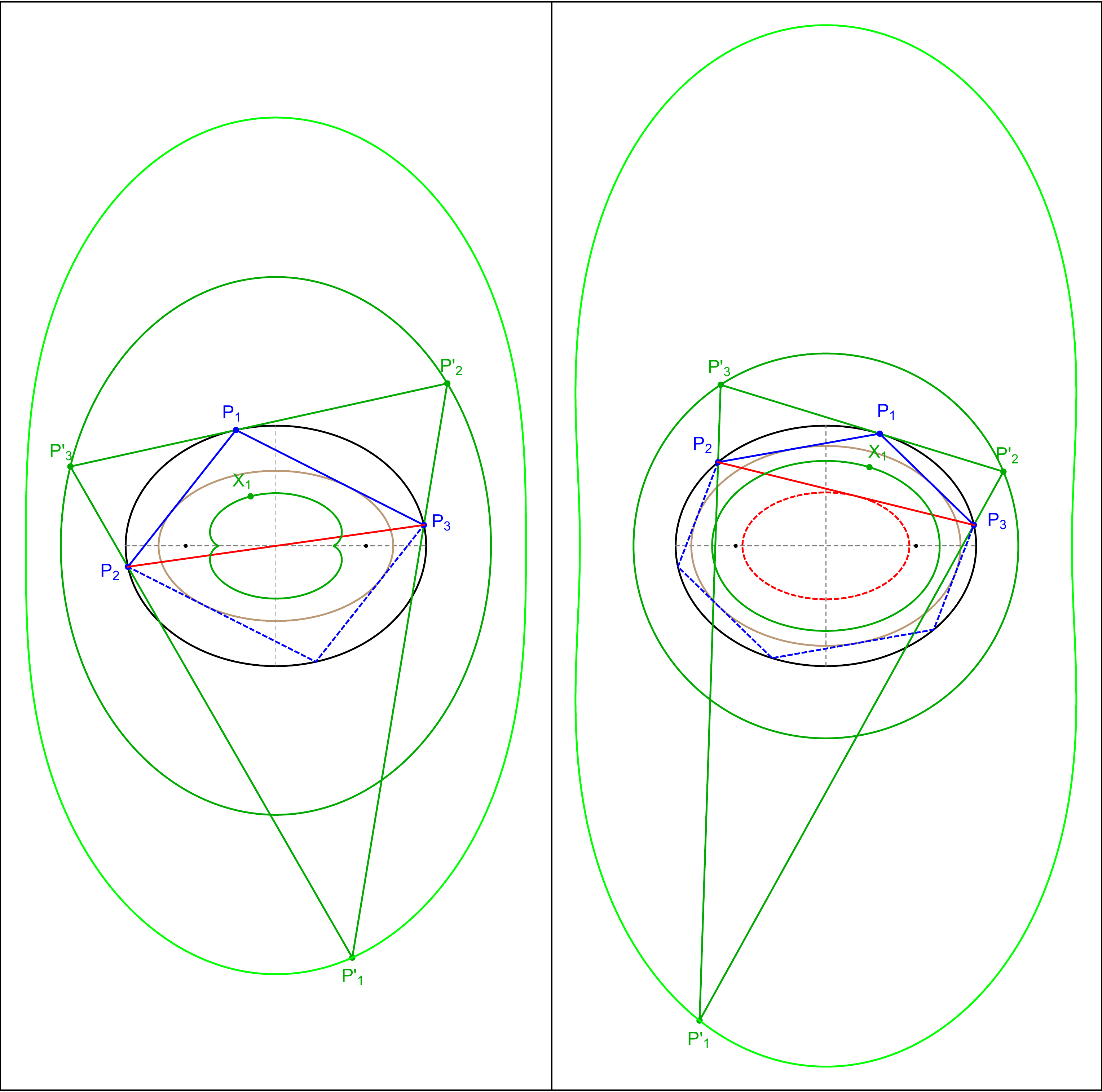}
    \caption{\textbf{Left:} the conf-II caustic $\E'$ (brown) is chosen such that the envelope of $P_2 P_3$ (red) is the center of the system. Note that $\E'$ is the caustic required for the $N=4$ trajectory (dashed blue). In such a case, the axis ratio of the elliptic locus of excenters $P_2',P_3'$ (dark green) is the reciprocal of that of $\E$ (black). \textbf{Right:} The caustic (brown) is now chosen such that if one were to complete the Poncelet iteration about $\E'$ one would obtain an $N=6$ trajectory (dashed blue). In such a case, the locus of $P_2',P_3'$ is a circle! Note that in both cases the locus of $P_1'$ (light green) and that of the incenter $X_1$ (dark green interior to $\E'$) are non-conics. \href{https://youtu.be/wB9bVkY9rqU}{Video}}
    \label{fig:confII-n4-n6}
\end{figure}

\subsection{The barycenter locus}

That the locus of the barycenter $X_2$ (and many other triangle centers) is an ellipse over the conf-I family has been established \cite{garcia2020-new-properties}. For the general conf-II case, it is not a conic. As shown in \cref{fig:confII-x2-n4} (top left), it is a conic if the caustic is such that the envelope of $P_2 P_3$ is the center point. Indeed:

\begin{proposition}
Over a family of triangles $P_1 P_2 P_3$ inscribed in an outer ellipse $\E$, where sides $P_1 P_2$ and $P_1 P_3$ are tangent to an inner, concentric, axis-parallel ellipse $\E'$, and the third side $P_2 P_3$ envelops the center point, the locus of $X_2$ will be homothetic to $\E$, at $1/3$ scale.
\end{proposition}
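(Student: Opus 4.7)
The plan is to observe that the statement reduces to a one-line computation once one interprets the envelope condition correctly, so the proof should be organized around making that interpretation precise.

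First I would place the common center of $\E$ and $\E'$ at the origin, so that $\E$ is symmetric about $O=(0,0)$. The hypothesis that $P_2 P_3$ envelops the center point $O$ means precisely that every line $P_2 P_3$ in the family passes through $O$ (a degenerate "envelope" consisting of a single point is exactly the set of lines through that point). Consequently $P_2P_3$ is a diameter chord of $\E$, and by the central symmetry of $\E$ its endpoints are antipodal: $P_3 = -P_2$.

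Next I would plug this into the definition of the barycenter. Since $P_2 + P_3 = 0$, we obtain
\[
X_2 \;=\; \frac{P_1 + P_2 + P_3}{3} \;=\; \frac{P_1}{3}.
\]
As $P_1$ traverses $\E$, the point $P_1/3$ traverses the homothetic image of $\E$ under the central homothety of ratio $1/3$ centered at $O$, which is the claim.

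The only step that requires a small amount of care, and which I would expect to be the main (though very mild) obstacle, is the justification that an "envelope equal to a point" should be read as "pencil of lines through that point" rather than as a degenerate conic in some other sense; in particular one should verify that the limiting configuration described in \cref{thm:confII-exc} (where the semi-axes of $\E''$ both collapse simultaneously) does correspond to $\lambda = a^2b^2/(a^2+b^2)$ producing precisely this pencil, as is already implicit in \cref{cor:confII-n4}. Once that is in hand, no further computation is needed, and one does not even need to invoke \cref{lem:pml} beyond the statement that such a family genuinely exists as a one-parameter family parametrized by $P_1 \in \E$.
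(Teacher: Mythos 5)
Your proposal is correct, and it reaches the key fact by a more elementary route than the paper. Both arguments ultimately rest on the same observation -- that the common center $O$ is the midpoint of the chord $P_2P_3$, whence $X_2=(P_1+P_2+P_3)/3=P_1/3$ -- but you justify that observation directly from the stated hypothesis: if the envelope of $P_2P_3$ degenerates to the point $O$, every such line passes through $O$, and a chord of the centrally symmetric ellipse $\E$ through its center has antipodal endpoints, so $P_2+P_3=0$. The paper instead completes the triangle to a Poncelet quadrilateral $P_1P_2P_3P_4$ with $P_4=-P_1$ and invokes the known properties of even-$N$ Poncelet families in concentric, axis-parallel pairs (parallel opposite sides, diagonals through the center) to conclude that $O$ bisects $P_2P_3$. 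Your version is shorter, self-contained, and uses only the hypothesis as literally stated, so it applies to any family with a center-point envelope without identifying it as the ``half $N=4$'' configuration; the paper's version buys the explicit link to the $N=4$ caustic that underlies \cref{cor:confII-n4} and the surrounding discussion. Your closing caveat about reading ``envelope equal to a point'' as the pencil of lines through that point is the right thing to flag, and your reading is the standard and intended one.
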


The proof was kindly contributed by Mark Helman \cite{helman2021-private}.

\begin{proof}
Let $P_4$ be the reflection of $P_1$ about the center O. Since an even-N Poncelet family in a concentric, axis-parallel ellipse pair has (i) parallel opposite sides, and (ii) diagonals which pass through $O$ \cite{halbeisen2015}, $P_1 P_2 P_3 P_4$ will be a parallelogram and $P_1 P_4$ will be such a diagonal. So $O$ is the midpoint of $P_2 P_3$, meaning $X_2$ of $P_1 P_2 P_3$ will be two-thirds of the way between $P_1$ and $O$. This sends the outer ellipse to a copy of itself scaled by $1/3$.
\end{proof}

\begin{figure}
    \centering
    \includegraphics[width=\textwidth]{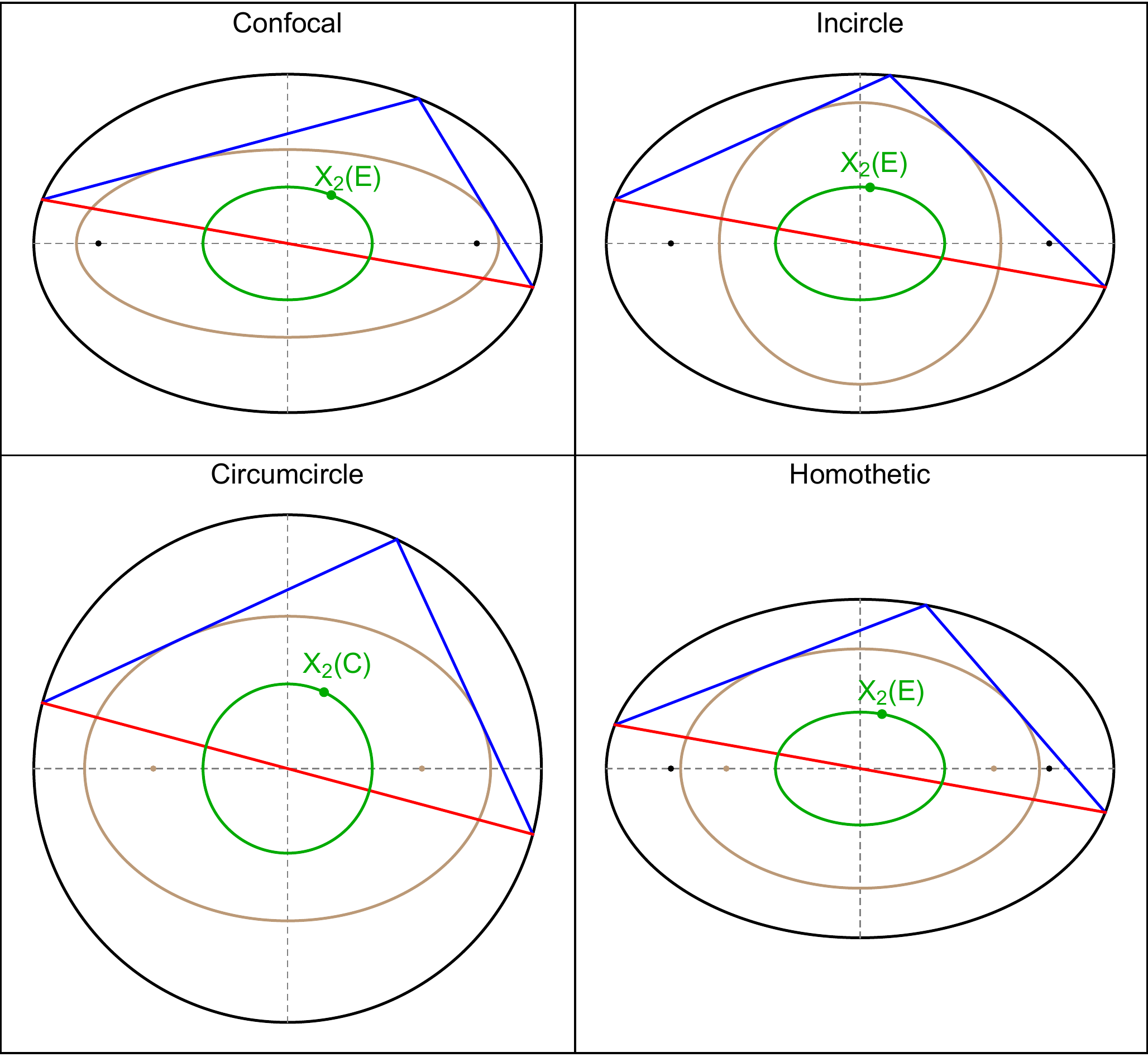}
    \caption{In the conf-II family (top left), where the caustic is selected such that the envelope of the free side (red) is the origin, the locus of $X_2$ is an ellipse. Indeed, the locus of $X_2$ is an ellipse for any triangle families inscribed in an outer ellipse and with two sides tangent to an inner concentric, axis-parallel one, such that the third side's envelope is the origin. \href{https://youtu.be/6yNod1LFVrY}{Video}}
    \label{fig:confII-x2-n4}
\end{figure}

The elliptic locus of $X_2$ is shown over other concentric, axis-parallel ``half $N=4$'' families in \cref{fig:confII-x2-n4}.

\section{Families with Three Caustics}
\label{sec:three-caustics}
\subsection{Adding another caustic to bic-II}

We now consider a natural extension to the bic-II family, namely, a family of triangles $P_1 P_2 P_3$ inscribed in an outer circle $\C$ where $P_1 P_2$ (resp. $P_1 P_3$) is tangent to a first circle $\C'$ (resp. second circle $\C''$). In particular, $\C''$ is picked to be in the pencil of $\C,\C'$. Note that by \cref{lem:pml}, the third side $P_2 P_3$ will be tangent to a third, in-pencil circle $\C'''$. We term this the ``bic-III'' family.

For the purposes of the next set of results, assume all four in-pencil circles in a bic-III family are distinct. Referring to \cref{fig:bicIII}, experimental evidence suggests:

\begin{conjecture}
Over the bic-III family, the locus of $X_1$ is a convex curve.
\end{conjecture}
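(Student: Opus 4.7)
The plan is to parametrize the bic-III family by the angular position $\theta$ of $P_1$ on the outer circle $\C$. With $P_1$ fixed, the vertex $P_2$ is the second intersection of $\C$ with the tangent from $P_1$ to $\C'$, and $P_3$ is analogously determined by $\C''$; \cref{lem:pml} guarantees the remaining side envelops the pencil circle $\C'''$. Rescaling $\C$ to the unit circle and using the half-angle uniformization $P_k=w_k^2$ with $w_k\in\mathbb{S}^1$, each tangency condition becomes a quadratic relation linking $w_1$ to $w_k$, so that $w_2,w_3$ are algebraic functions of $w_1=e^{i\theta/2}$, with branches chosen so the family moves continuously around $\C$. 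Substituting into the classical unit-circle incenter formula $X_1=-(w_1w_2+w_2w_3+w_3w_1)$ then yields an explicit, if unwieldy, parametrization of $X_1(\theta)$.

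The next step is to reduce convexity to a signed-curvature inequality. For a smooth closed plane curve $\gamma$, convexity is equivalent to the signed curvature $\kappa(\theta)=\det(\gamma'(\theta),\gamma''(\theta))/\|\gamma'(\theta)\|^3$ having constant sign, or equivalently to $\arg\gamma'(\theta)$ being monotonic in $\theta$ with total variation $2\pi$. In the bic-II degeneration $\C''=\C'$, \cref{thm:bicII-x1} gives a genuine circle, so $\kappa>0$ strictly there. I would treat bic-III as a one-parameter deformation of bic-II inside the pencil, and aim to prove that the numerator of $\det(X_1',X_1'')$ -- a trigonometric polynomial in $\theta$ whose coefficients are polynomials in the pencil parameters -- remains nonvanishing throughout the admissible range in which the triangle stays non-degenerate. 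Smoothness (the absence of cusps) must be verified separately by checking $X_1'(\theta)\ne 0$, which follows from the fact that $P_1,P_2,P_3$ each move nontrivially in $\theta$.

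The principal obstacle is turning this sign condition into a uniform algebraic statement. Two complementary routes suggest themselves: a \emph{local} one, in which $\det(X_1',X_1'')$, after clearing denominators, factors into pieces each of constant sign on the relevant region; and a \emph{topological} one, in which one argues that any first failure of convexity as the configuration deforms would force a boundary configuration where two of the four in-pencil circles coincide, or where a circle collapses to a limiting point of the pencil, contradicting the genericity of $\C,\C',\C'',\C'''$. A successful uniform argument will likely exploit a hidden monotonicity: as $\theta$ varies, the three arc-midpoints associated with the triangle appear numerically to rotate monotonically on $\C$, preventing $X_1$ from doubling back. Quantifying this monotonicity sharply enough to rule out every inflection is the step I expect to be hardest, and may ultimately require either a clever geometric invariant or a careful discriminant analysis of the algebraic system governing the pencil parameters.
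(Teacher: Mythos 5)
Note first that the paper offers no proof of this statement: it is presented purely as a conjecture supported by experimental evidence, so there is no argument of the authors' to compare yours against. Your text, in turn, is a programme rather than a proof. The decisive step --- showing that the numerator of $\det(X_1'(\theta),X_1''(\theta))$, a trigonometric polynomial whose coefficients are algebraic in the pencil parameters, never vanishes over the entire admissible range of configurations --- is precisely the content of the conjecture, and you leave it open, saying yourself that it may require ``a clever geometric invariant or a careful discriminant analysis.'' Until that inequality is established for all admissible $(R,d,r,u)$ and all $\theta$, nothing is proved; the statement remains a conjecture.

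Beyond this central gap, two steps in the plan would fail as written. The ``topological'' route is not an argument: convexity is generically lost through an inflection appearing at an \emph{interior} point of parameter space (a double zero of the curvature numerator), and there is no a priori reason the first such failure must coincide with a degeneration of the pencil (two of $\C,\C',\C'',\C'''$ coinciding, or a caustic collapsing to a limiting point). To make that route work you would have to show the discriminant locus of the curvature numerator lies inside the degeneration locus --- which is again the hard algebraic fact you set out to avoid. Separately, the unit-circle formula $X_1=-(w_1w_2+w_2w_3+w_3w_1)$ presupposes the ``arc-midpoint'' choice of square roots; since $P_2$ and $P_3$ are two-valued algebraic functions of $P_1$ (tangency to circles not concentric with $\C$), the branches of $w_2,w_3$ must be tracked coherently over a full revolution of $P_1$, or the formula silently switches from the incenter to an excenter. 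Finally, regularity is not automatic: $X_1'(\theta)\neq0$ does not follow from each vertex moving nontrivially, since the three contributions to $X_1'$ could cancel; this too needs a genuine argument before the curvature criterion applies.
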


\begin{conjecture}
Over the bic-III family, the locus of a triangle center is never a conic.
\end{conjecture}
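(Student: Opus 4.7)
The plan is to parametrize the bic-III family explicitly and then, via a limiting argument combined with elimination, reduce the conjecture to a finite verification ruling out proper conic loci.

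\textbf{Parametrization.} Coordinatize so $\C$ is centered at the origin with radius $R$, and all three in-pencil caustics $\C'$, $\C''$, $\C'''$ have centers on the $x$-axis (forced by coaxiality). Put $P_1(\theta)=(R\cos\theta,R\sin\theta)$. One branch of the two tangents from $P_1$ to $\C'$ defines side $P_1P_2$, whose second intersection with $\C$ yields $P_2(\theta)$. Similarly, the chosen tangent from $P_1$ to $\C''$ determines $P_3(\theta)$. Each vertex is then algebraic in $(\cos\theta,\sin\theta)$, involving the two radicals corresponding to the tangent lengths from $P_1$ to $\C'$ and $\C''$. For any triangle center whose barycentric coordinates are rational in the side lengths, the locus $X(\theta)$ is therefore an algebraic function of $\theta$ of controllable degree.

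\textbf{Reduction to a finite list.} Suppose the bic-III locus of a triangle center $X$ is a conic. Deforming $\C''\to\C'$ continuously collapses the bic-III family to a bic-II family, and by continuity of the image curve's implicit equation, the bic-II locus of $X$ must also be a conic. Applying \cref{conj:bicII-stationary} in this limit forces $X$ to be stationary over bic-I, and inspecting \cref{tab:poristic-conics} restricts the candidate list to $X_3$ (trivially a fixed point), the circular-locus centers $X_1,X_{40},X_{165}$, and the entries already marked ``E'' in the bic-II row. The universal statement over all triangle centers is thereby reduced to a finite verification.

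\textbf{Elimination.} For each shortlisted candidate, substitute the bic-III parametrization above into a generic conic
\[ \sigma(x,y)=Ax^2+Bxy+Cy^2+Dx+Ey+F, \]
rationalize the two radicals, and collect the resulting polynomial identity in $\cos\theta,\sin\theta$. Demanding that every coefficient vanish yields an overdetermined system in the six coefficients of $\sigma$ and the five caustic parameters $R,r',d',r'',d''$. A direct CAS computation should show that over the non-degenerate parameter region (where the three in-pencil circles are distinct), the only solutions are $\sigma\equiv 0$ or the degeneracy $\C''=\C'$, excluding the presence of any proper conic locus for that center.

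\textbf{Main obstacle.} The essential difficulty is handling the infinite class of triangle centers uniformly; the limiting reduction to the bic-II shortlist is the structural idea that converts the conjecture into a finite case analysis. A secondary difficulty is controlling the two independent radicals in the parametrization: after clearing, the resulting polynomial system is sizable, and rigorously concluding that its vanishing forces caustic coincidence likely requires a genericity hypothesis on $(R,r',d',r'',d'')$ excluding a proper algebraic subset where accidental cancellations could conceal spurious conic components. The stronger conjecture that the bic-III locus is always convex (stated just above) would, if established first, sharpen the elimination step by restricting the admissible shape of $\sigma$ to an ellipse.
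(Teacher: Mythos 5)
The paper states this as a conjecture supported only by the numerical evidence in \cref{fig:bicIII}; it offers no proof, so there is nothing of the authors' to compare your argument against. Judged on its own terms, your proposal has a fatal gap at the reduction step. From the hypothesis that the locus of $X$ is a conic for \emph{one} particular bic-III configuration, you deform $\C''\to\C'$ and claim ``by continuity'' that the limiting bic-II locus is also a conic. But conic-ness is a property of a single member of the family of loci, not of the deformation path: the implicit equation of the locus varies with the caustic parameters, and a curve that happens to have degree $2$ at one parameter value can have degree $6$ at every nearby value --- this is exactly what happens for $X_2$ in the bic-II family, whose sextic locus collapses to a circle precisely at the poristic configuration (\cref{fig:bicII-topo-x2}). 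So the limit tells you nothing about the hypothetical conic configuration unless you first prove the locus is a conic along the entire path, which is essentially what you are trying to rule out.

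Second, even granting the reduction, the shortlist is neither finite nor unconditionally available. You invoke \cref{conj:bicII-stationary}, which is itself an unproven conjecture, and \cref{tab:poristic-conics} only surveys centers among the first $1000$ entries of \cite{etc}; the class of triangle centers is infinite, so the promised ``finite verification'' is never actually reached. Finally, the elimination step is a computation plan (``a direct CAS computation should show\ldots'') rather than a proof: with two independent radicals and five caustic parameters, the non-vanishing of the resulting resultant system is precisely the thing that must be demonstrated, not asserted, and the genericity hypothesis you append would in any case weaken the conclusion from ``never a conic'' to ``not a conic for generic caustics.'' As it stands the proposal does not close the conjecture, though the elimination scheme could plausibly be used to certify non-conicity for an \emph{individual} center at \emph{generic} parameters, which would already go beyond what the paper establishes.
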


\begin{figure}
    \centering
    \includegraphics[width=\textwidth]{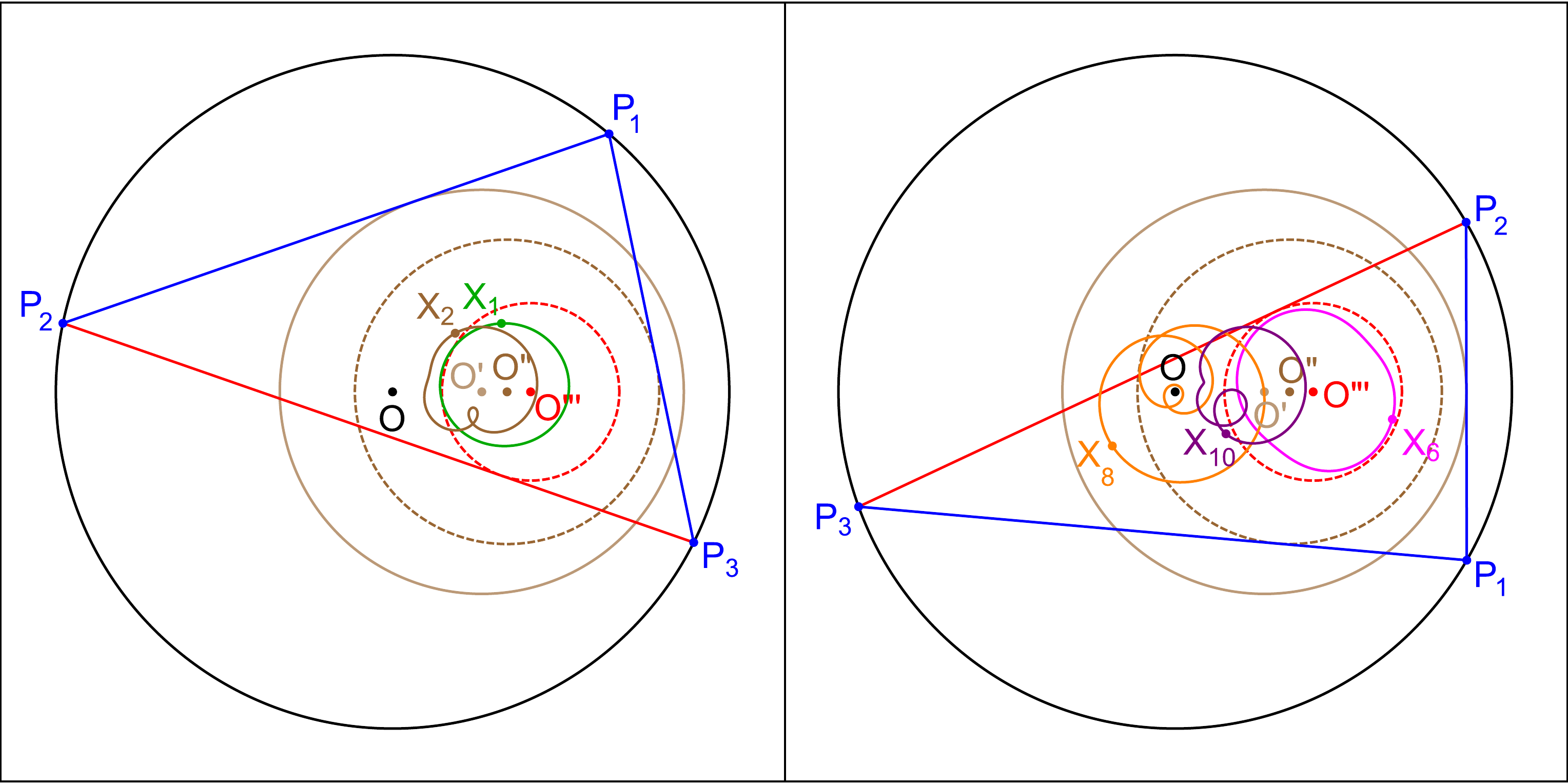}
    \caption{Non-conic loci of triangle centers over bic-III triangles, vertices lie on an outer circle (black) while each side is tangent to a distinct in-pencil circle (brown, dashed brown, and dashed red). \textbf{Left:} loci of the incenter $X_1$ (green) and of the barycenter $X_2$ (brown). Experimentally, the locus of $X_1$ is always convex. \textbf{Right:} loci of the symmedian point $X_6$ (pink), Nagel point $X_8$ (orange), and Spieker center $X_{10}$ (purple). Though in the current choice the locus of $X_6$ looks convex, in general it is not. \href{https://youtu.be/cvB0A7LmlZc}{Video}}
    \label{fig:bicIII}
\end{figure}

Referring to \cref{fig:bicIII-excs}, we also find that:

\begin{conjecture}
Over the bic-III family, the excenters sweep three distinct non-conic curves.
\end{conjecture}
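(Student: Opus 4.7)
The plan is to parametrize the bic-III family algebraically and then show, via elimination, that each excenter locus has an implicit equation of degree strictly greater than two, and that the three loci are pairwise distinct. Fix the outer circle $\C$ and the two caustics $\C',\C''$ to which $P_1P_2$ and $P_1P_3$ are respectively tangent; by \cref{lem:pml} the side $P_2P_3$ envelops a third in-pencil circle $\C'''$, so the closure is automatic and the family depends on a single parameter $t$ tracing $P_1$ along $\C$. For each $t$, the point $P_2$ is the second intersection with $\C$ of the tangent line from $P_1$ to $\C'$ (choosing the appropriate of two branches consistently), and similarly $P_3$ from $\C''$. This yields $P_1,P_2,P_3$ as algebraic functions of $t$, and hence the side lengths $s_i$ and the excenters
\[ P_1'=\frac{-s_1P_1+s_2P_2+s_3P_3}{-s_1+s_2+s_3},\quad P_2'=\frac{s_1P_1-s_2P_2+s_3P_3}{s_1-s_2+s_3},\quad P_3'=\frac{s_1P_1+s_2P_2-s_3P_3}{s_1+s_2-s_3}. \]

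To establish the non-conic claim, I would clear radicals and denominators to obtain, for each $P_i'$, a pair of polynomial relations between its coordinates $(x,y)$ and the parameter $t$, and then eliminate $t$ via resultants or a Groebner basis. The resulting implicit polynomial $F_i(x,y)$ defines the $i$th excenter locus; it suffices to verify that (after removing spurious factors introduced by the elimination) its irreducible component containing the locus has degree at least three. A safeguard against a hidden conic factor is the classical five-points test: pick five sample parameters $t_1,\ldots,t_5$, compute the unique conic through the corresponding five excenters, and exhibit a sixth parameter whose excenter does not lie on it. Stability of the degree under small perturbations of the caustic data then promotes the sample computation to a generic statement.

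For distinctness of the three loci, the key structural observation is that the symmetry exploited in \cref{cor:bicII-exc} to force $P_1'$ onto a circle in bic-II relied on $P_1P_2$ and $P_1P_3$ sharing the caustic $\C'$; in bic-III all three caustics are genuinely distinct, so no two excenters play interchangeable roles. Concretely, one can show that the Bevan-type coincidences which, in bic-II, forced two of the three loci to merge or to become homothetic, fail here: evaluating $P_1',P_2',P_3'$ at a single generic $t$ and showing that no affine relation sends one onto another (e.g., by computing one discriminant that is nonzero for an open set of caustic configurations) upgrades to the pairwise distinctness of the three implicit curves $F_1,F_2,F_3$.

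The main obstacle is the symbolic bulk of the elimination: each $P_i'$ is a rational function in $t$ whose coefficients involve the radii and centers of $\C,\C',\C''$ together with nested square roots coming from the two tangent-line constructions, and the resultant that kills $t$ will in general produce a polynomial of large degree with many extraneous factors. A pragmatic route is to perform the elimination at a generic numerical choice of the parameters $(R,r',r'',d',d'')$, check the degree and irreducibility of the resulting implicit equation, and then argue by semicontinuity that the same degree persists on a Zariski-open set; a complementary check at a second independent numerical point hardens this into a proof rather than a conjecture. The most delicate part is ruling out accidental factorization into a conic times a higher-degree curve for exceptional subfamilies of $(\C,\C',\C'')$, which may require an auxiliary discriminant computation isolating those exceptional configurations and verifying that the conjecture holds on that lower-dimensional locus as well.
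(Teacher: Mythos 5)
First, note that the paper does not prove this statement at all: it is presented as a conjecture supported only by the numerical experiments behind \cref{fig:bicIII-excs}, so there is no proof of record to compare yours against. Your elimination-based strategy is the natural computational route, but as written it has genuine gaps. Every decisive step is deferred to a computation that is then generalized by ``semicontinuity'': verifying that the implicit curve $F_i(x,y)=0$ has degree at least three and no conic factor at one or two numerical parameter choices only establishes the claim on a Zariski-open subset of configurations $(R,r',d',r'',d'')$, whereas the conjecture covers all bic-III families with four distinct in-pencil circles. You yourself flag the exceptional locus as ``the most delicate part'' without resolving it, and the danger is not hypothetical: in both the bic-II and conf-II families exactly one excenter locus degenerates to a conic (\cref{cor:bicII-exc}, \cref{thm:confII-exc}), so special subfamilies where a conic factor reappears must be explicitly excluded, not merely acknowledged. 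There is also a branch issue you gloss over: there are four consistent choices of tangent lines (two from $P_1$ to $\C'$, two to $\C''$), each producing a different family and different loci, and the argument would have to be run for each.

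The second gap is the distinctness argument, which as formulated would fail. Evaluating $P_1',P_2',P_3'$ at a single generic $t$ and observing that the three points are distinct, or not related by an affine map, proves nothing about the curves they sweep: the three excenters of any triangle are always three distinct points, and yet in the conf-II family two of them sweep the very same ellipse. Distinctness of the loci is a global statement about the curves $F_1=0$, $F_2=0$, $F_3=0$; the correct certificate is, for instance, a parameter value $t_0$ with $F_j\bigl(P_i'(t_0)\bigr)\neq 0$ for some $i\neq j$, or a direct comparison of the eliminated implicit polynomials. As with the degree claim, this again reduces to a computation you have not performed, so the statement remains, as in the paper, a conjecture.
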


\begin{figure}
    \centering
    \includegraphics[width=\textwidth]{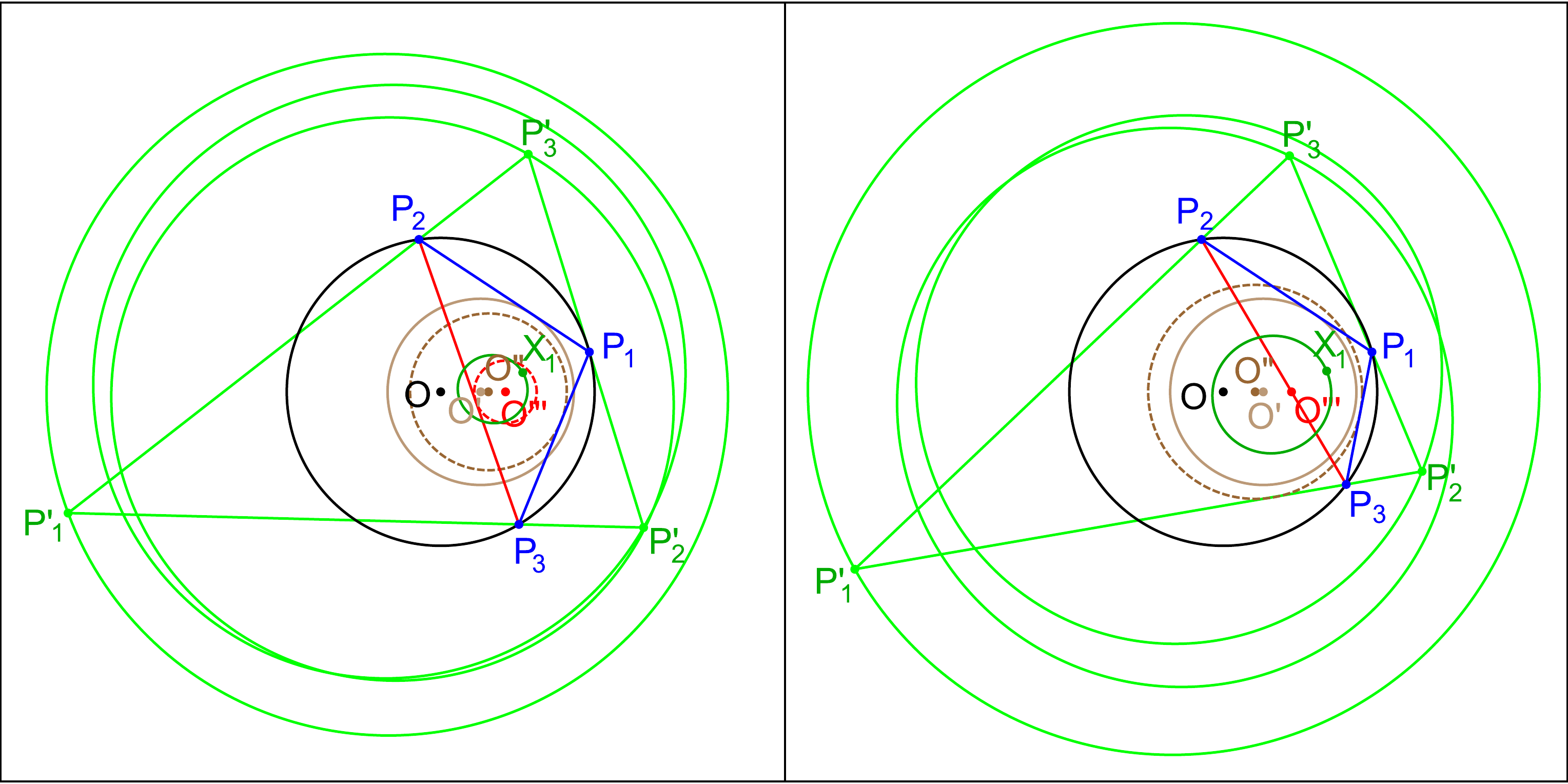}
    \caption{\textbf{Left:} bic-III triangles $P_1 P_2 P_3$, where the envelope of $P_2 P_3$ (dashed red) is a circle with non-zero radius. The loci (light green) of the three excenters $P_i'$ are all non-conics, as is that of the incenter $X_1$ (dark green). \textbf{Right:} two caustics (brown and dashed brown) are selected so that the envelope of $P_2 P_3$ is the (internal) limiting point of the pencil. Even in this case, the loci of all excenters are non-conics, though that of $P_1'$ is numerically very close to a circle. \href{https://youtu.be/A_-U2VvM5kY}{Video}}
    \label{fig:bicIII-excs}
\end{figure}

\subsection{Adding another in-pencil caustic to conf-II}

We call ``conf-III'' a family of triangles inscribed in an ellipse $\E$, with a first side tangent to a confocal caustic $\E'$, and a second one tangent to a distinct ellipse $\E''$ in the pencil spanned by $\E,\E'$ (it is therefore non-confocal). Again, \cref{lem:pml} ensures that the third side will envelop a third ellipse in the same pencil. As in the bic-III family, no triangle centers produce conic loci. As shown in \cref{fig:conf-III}, these can be quite convoluted.

\begin{figure}
    \centering
    \includegraphics[width=.8\textwidth]{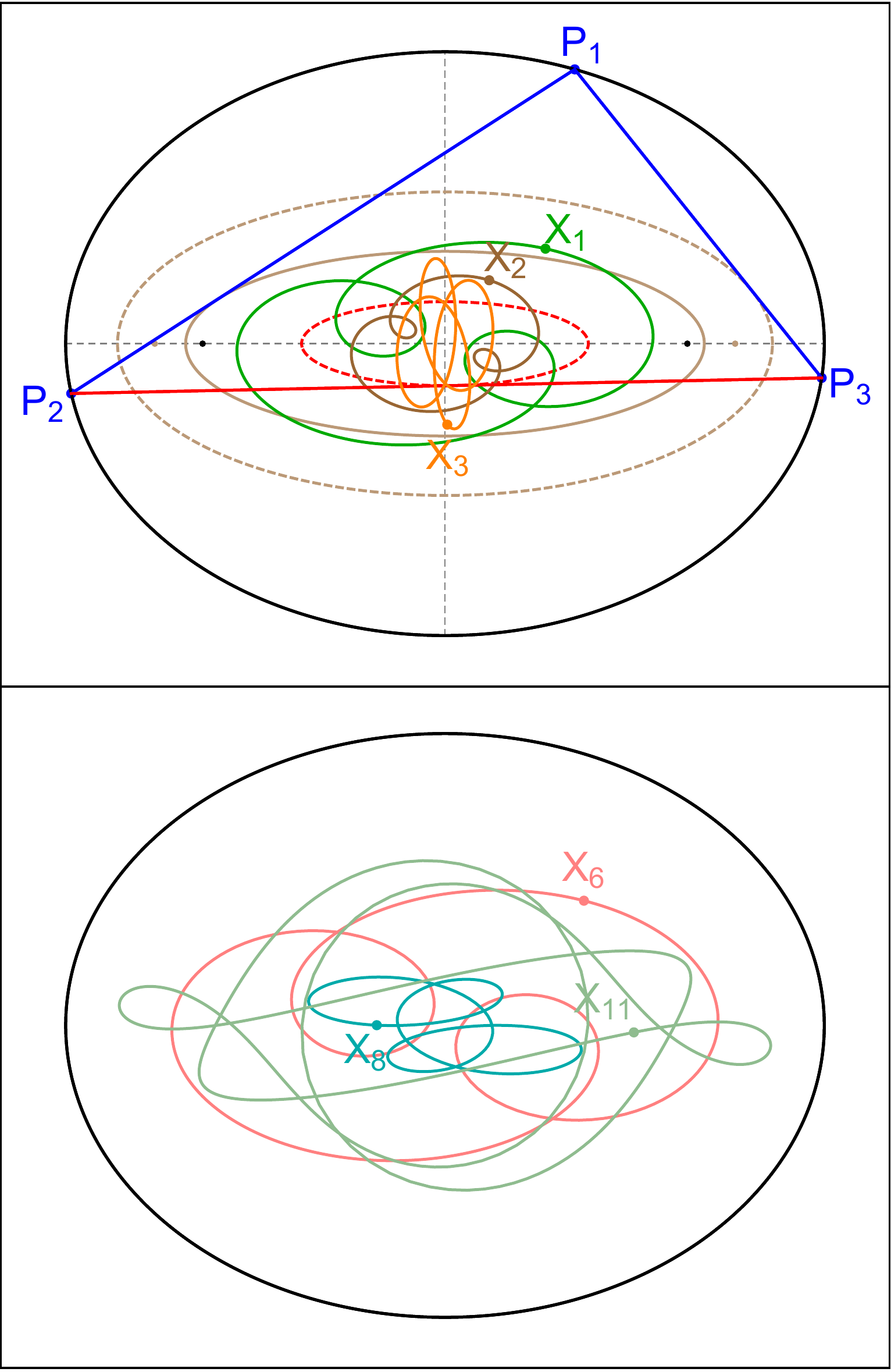}
    \caption{\textbf{Left:} A conf-III family inscribed in an outer ellipse (black) and with each side tangent to a separate in-pencil caustic (brown, dashed brown, and red). Only the first of these (brown) is confocal. The non-conic, convoluted loci of $X_k$, $k=1,2,3$ are shown (green, brown, green). \textbf{Right:} the same setup, now showing the whirly loci of $X_k$, $k=6,8,11$. For simplicity, the three inner caustics have been omitted.}
    \label{fig:conf-III}
\end{figure}

\subsection{Choosing one of four triangles}

Referring to \cref{fig:four-tris-x1x2}, there are four ways in which a bic-III triangle can be chosen, namely, two tangents of $P_1 P_2$ over $\C'$, and two tangents of $P_1 P_3$ over $\C''$. As it turns out, these only produce two distinct envelopes for $P_2 P_3$. For each choice, a triangle center such as the incenter or barycenter will sweep a distinct locus.

\begin{figure}
    \centering
    \includegraphics[width=\textwidth]{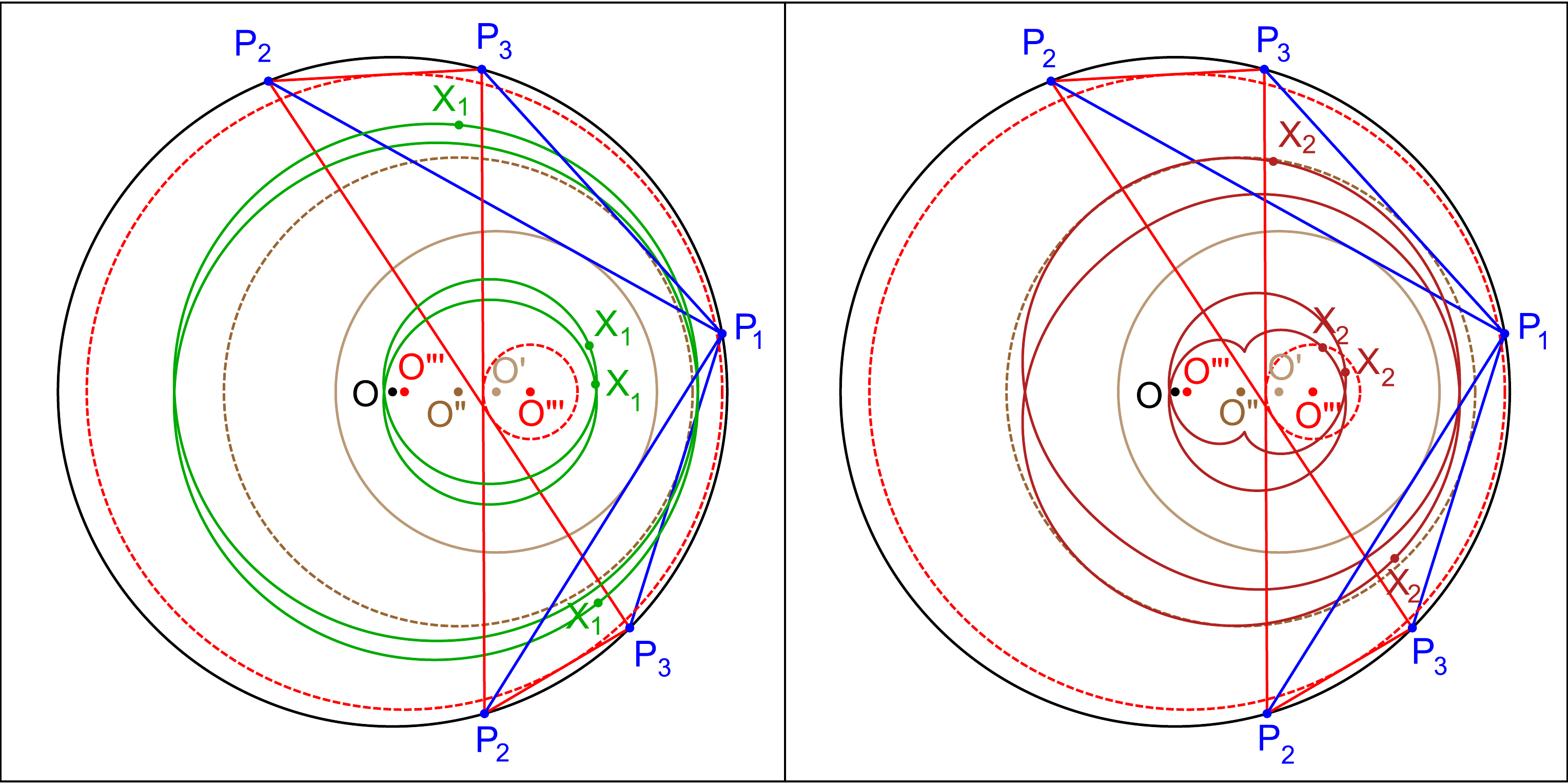}
    \caption{The four possible choices of $P_1 P_2$ and $P_1 P_3$ over the bic-III family, and the locus of the incenter (green, left), and the barycenter (brown, right), traced out under each choice. Note: only two distinct envelopes (dashed red) of $P_2 P_3$ are produced. \href{https://youtu.be/E1Rcu38MePQ}{Video 1}, \href{https://youtu.be/u1_uANWDNr8}{Video 2}}
    \label{fig:four-tris-x1x2}
\end{figure}

\section{Summary and Videos}
\label{sec:videos}

\begin{table}
\setlength{\tabcolsep}{3pt}
\begin{tabular}{|r|l|c|cccccc|}
\hline
family & note, aka. & {\scriptsize caustics} & $X_1$ & $X_2$ & $X_3$ & $P_1'$ & $P_2'$ & $P_3'$ \\
 \hline
bic-I & poristic, bicentric & 1 & P & C & P & C & C & C \\
bic-II & tricentric & 2 & C & 6 & P & C & 6 & 6 \\
bic-III & quadricentric & 3 & N$^\dagger$ & N & P & N$^\dagger$ & N$^\dagger$ & N$^\dagger$ \\
\hline
conf-I & confocal, elliptic billiard$^\ddagger$ & 1 & E & E & E & E & E & E \\
conf-II & caustics: 1 conf., 1 in-pencil & 2 & N$^\dagger$ & N & N & 6 & E & E \\
conf-III & 1 conf., 2 in-pencil & 3 & N & N & N & N & N & N\\
\hline
\end{tabular}
\caption{Loci types for $X_1$, $X_2$, and the excenters for each family mentioned in the article. Symbols $P$, $C$, $E$, and $6$ stand for point, circle, ellipse, and sextic curves, respectively. An ``N'' denotes a non-conic whose degree hasn't yet been determined. $^\dagger$ A non-conic locus is conjectured to be convex. $^\ddagger$ The mittenpunkt $X_9$ is stationary over conf-I \cite{garcia2020-new-properties}.}
\label{tab:summary}
\end{table}

Animations illustrating some of the above phenomena are listed in Table~\ref{tab:playlist}.

\begin{table}
\small
\begin{tabular}{|c|c|l|l|}
\hline
id & Fam. & Title & \textbf{youtu.be/<.>}\\
\hline
01 & n/a & Poncelet's porism with 2 ellipses, $N=7$ & \href{https://youtu.be/kzxf7ZgJ5Hw}{\texttt{kzxf7ZgJ5Hw}}\\
02 & n/a & Poncelet's closure theorem (PCT) & \href{https://youtu.be/L5A_S4VQLiw}{\texttt{L5A\_S4VQLiw}}\\
03 & all & $N=3$ families with multiple caustics & \href{https://youtu.be/8HXgkuY-nFQ}{\texttt{8HXgkuY-nFQ}} \\
\hline
04 & bic-II & Bicentric family with 1,2 caustics &
\href{https://youtu.be/OM7uilfdGgk}{\texttt{OM7uilfdGgk}}\\
05 & bic-II & Circular loci of $X_k,k=$1,40,165 & \href{https://youtu.be/qJGhf798E-s}{\texttt{qJGhf798E-s}}\\
06 & bic-II & Loci of $X_1$ and $X_2$ over varying caustic radius & \href{https://youtu.be/3dnsWPlAmxE}{\texttt{3dnsWPlAmxE}}\\
07 & bic-II & Loci of $X_k,k=2,4,5$ over varying caustic radius & \href{https://youtu.be/6Fqp6Z1Q-0A}{\texttt{6Fqp6Z1Q-0A}}\\
08 & bic-II & Loci of the excenters (a circle and 2 non-conics) & \href{https://youtu.be/qdqIuT-Qk6k}{\texttt{qdqIuT-Qk6k}}\\
\hline
09 & conf-II & Confocal family with 1,2 caustics & \href{https://youtu.be/C14TL430UBc}{\texttt{C14TL430UBc}}\\
10 & conf-II & Locus of $X_1$ under sweep of confocal caustics & \href{https://youtu.be/kCY6KHFDV2M}{\texttt{kCY6KHFDV2M}}\\
11 & conf-II & Locus of $X_1$ and excenters & \href{https://youtu.be/AisIrfn4IGg}{\texttt{kCY6KHFDV2M}}\\
12 & conf-II & Loci of the excenters with the $N=4,6$ caustics & \href{https://youtu.be/wB9bVkY9rqU}{\texttt{wB9bVkY9rqU}}\\
\hline
13 & bic-III & Non-conic loci of $X_k,k=$1, 2, 6, 8, 10 & \href{https://youtu.be/cvB0A7LmlZc}{\texttt{cvB0A7LmlZc}}\\
14 & bic-III & Non-conic loci of incenter and excenters & \href{https://youtu.be/A_-U2VvM5kY}{\texttt{A\_-U2VvM5kY}}\\
15 & bic-III & Loci of $X_1,X_2$ under 4 possible triangle choices & 
\href{https://youtu.be/E1Rcu38MePQ}{\texttt{E1Rcu38MePQ}}\\
16 & bic-III & Loci of $X_6,X_8$ under 4 possible triangle choices & 
\href{https://youtu.be/u1_uANWDNr8}{\texttt{u1\_uANWDNr8}}\\
\hline
17 & n/a & Locus of $X_2$ over ``half $N=4$'' families & \href{https://youtu.be/6yNod1LFVrY}{\texttt{6yNod1LFVrY}}\\
18 & n/a & Loci of $X_{11}$ and $X_{59}$ over varying circular caustic & \href{https://youtu.be/2VqgB6KvP2g}{\texttt{2VqgB6KvP2g}}\\
\hline
\end{tabular}
\caption{Table of videos. The last column is clickable and provides the YouTube code.}
\label{tab:playlist}
\end{table}

\section*{Acknowledgements}
\noindent We would like to thank Alexey Zaslavsky for the proof of Theorem 2 and Mark Helman for proposing the variant we use here. Also invaluable has been Arseniy Akopyan's generous availability to review and answer dozens of questions about many of the results herein. The first author is fellow of CNPq and coordinator of Project PRONEX/ CNPq/ FAPEG 2017 10 26 7000 508.

\appendix

\section{Symbol Table}
\label{app:symbols}
\cref{tab:symbols} lists most symbols used herein.

{\small
\begin{table}
\begin{tabular}{|c|l|}
\hline
symbol & meaning \\
\hline
$\C,\C'$ & outer and inner circle in a poristic pair \\
$R,r,d$ & radii of $\C$ and $\C'$ and distance between their centers \\
$\C''$ & circular envelope of side $P_2 P_3$ in the pencil of $\C,\C'$\\
$\E,\E'$ & outer and inner ellipses in confocal pair \\
$\E''$ & non-confocal envelope of side $P_2 P_3$ in the pencil of $\E,\E'$\\
$O,O'$ & centers of either $\C,\C'$ or $\E,\E'$ \\
$a,b$ & major and minor semi-axes of $\E$ \\
$c$ & half the focal length of $\E$ \\
$a',b'$ & major and minor semi-axes or $\E'$ \\
\hline
$X_1$ & Incenter \\
$X_2$ & Barycenter \\
$X_3$ & Circumcenter \\
$X_4$ & Orthocenter \\
$X_5$ & Euler's circle center \\
$X_6$ & Symmedian point \\
$X_9$ & Mittenpunkt \\
$X_{40}$ & Bevan point \\
\hline
\end{tabular}
\caption{Symbols used in the article.}
\label{tab:symbols}
\end{table}
}

\section{Family Parametrizations}
\label{app:vtx-param}
\subsection{Bic-I}
This family is also known as poristic or bicentric triangles. To obtain its vertices, use the parametrization for bic-II (see below), setting $d=\sqrt{R(R-2r)}$.

\subsection{Bic-II}\label{subsec:Bic-II}
Let $P_i=[x_i,y_i]$ denote the vertices of a bic-II triangle, $i=1,2,3$. Then:
{\small
\begin{align*}
  x_2&=\frac{(2 r y_1 (R^2 - d^2)  \Delta+   (2 d R^2   - ( R^2   + d^2) x_1) (\Delta^2 -   r^2))}{(R^2 + d^2 - 2 d x_1)^2}, \\
  y_2&=\frac{  (4R^2 d - 2 (R^2  + d^2) x_1) r \Delta - y_1 (R^2 - d^2)  (\Delta^2- r^2)}{ (R^2 + d^2 - 2 d x_1)^2},\\
  x_3&=x_2-\frac{4(R^2 - d^2) y_1 r\Delta}{(R^2 + d^2 - 2dx_1)^2},\;\;
  y_3=y_2- \frac{4(2R^2d - (R^2+d^2) x_1) r\Delta}{(R^2 + d^2 - 2dx_1)^2},
\end{align*}}
where $\Delta=\sqrt{R^2 + d^2 - 2 d x_1 - r^2}$.



\subsection{Bic-III}
Consider the pencil of circles 
$ \C_u=(1-u)\C_1+u C_2$
where 
\[ \C_1:\; (x-d)^2+y^2=r^2,\;\;\C_2 :\; x^2+y^2=R^2.\]
The center of $\C_u$ is $(d(u),0)=(d(1-u),0)$. The   radius $r(u)$ of $\C_u$ is given by
\[r(u)=\sqrt{d^2u^2 + (R^2 - d^2 - r^2)u + r^2}.\]

Let $P_i=[x_i,y_i]$ denote the vertices of a bic-III triangle, $i=1,2,3$, where $P_1 P_2$ is tangent to $\C_1$ and $P_2 P_3(u)$ tangent to $\C_u$. Then {\small
\begin{align*}
  x_2(u)&=\frac{(2 r(u) y_1 (R^2 - d(u)^2)  \Delta(u)+   (2 d(u) R^2   - ( R^2   + d(u)^2) x_1) (\Delta(u)^2 -   r(u)^2))}{(R^2 + d(u)^2 - 2 d(u) x_1)^2}, \\
  y_2(u)&=\frac{  (4R^2 d(u) - 2 (R(u)^2  + d(u)^2) x_1) r(u) \Delta(u) - y_1 (R(u)^2 - d(u)^2)  (\Delta(u)^2- r(u)^2)}{ (R^2 + d(u)^2 - 2 d(u) x_1)^2},\\
  x_3(u)&=x_2(u)-\frac{4(R^2 - d(u)^2) y_1 r(u)\Delta(u)}{(R^2 + d(u)^2 - 2d(u) x_1)^2},\\
  y_3(u)&=y_2(u)- \frac{4(2R^2d(u) - (R^2+d(u)^2) x_1) r(u)\Delta(u)}{(R^2 + d(u)^2 - 2d(u) x_1)^2}.
  \end{align*}
}
where $\Delta(u)=\sqrt{R^2 + d(u)^2 - 2 d(u) x_1 - r(u)^2}$.
\subsection{Conf-I}

This family is also known as the ``elliptic billiard''. Its vertices can be obtained from the conf-II family (see below), setting $a',b'$ as in \cref{eqn:confocal}.

\subsection{Conf-II}
\label{subsec:Conf-II}
{\small
\begin{align*}
P_1&=[x_1,y_1],\\
P_2&=\left[\frac {2\,  a\alpha_3\,y_1 \Delta- \,\alpha_1\,\alpha_2 x_1} {W}
,{\frac {-2\,{b}^{2} \,\alpha_2\,x_1\Delta-a\alpha_1\,\alpha_3\,y_1}{
a W}}\right],\\
P_3&=\left[ \frac{-2\, a\alpha_3\,y_1\Delta - \,\alpha_1\,\alpha_2 x_1}{W},
{\frac {2\,{b}^{2} \,\alpha_2 x_1\Delta  -a  \alpha_1\,\alpha_3\,y_1}{a{
  W}}}\right],\\
W&= \frac{\alpha_2^{2} x_1^{2}}{a^2}+  \frac{\alpha_3^{2} y_1^2}{b^2},\;\;\Delta^2=\left( {a}^{2}b'^{2}-a'^{2}b'^{2} \right) x_1^{2}+ \left( {a}^{2}a'^{2}-{\frac {{a}^{2}a'^{2} b'^{2}}{ b^{2}}} \right) y_1^{2},\\
\alpha_1& =a^2(b^2 -  b'^2) - a'^2 b^2,\;\;\alpha_2=(a^2-a'^2)b^2 + a^2b_c^2,\;\; \alpha_3=a^2( b^2-  b'^2)+a'^2 b^2.
\end{align*}}

\section{Locus of barycenter over bic-II}
\label{app:locus-x2}

Consider the bic-II family, parametrized by $P_1=[x_1,y_1]=R[\cos{t},\sin{t}]$. The locus of the barycenter $X_2$ of $P_1 P_2 P_3$ is given by
{\small
\begin{align*}
    X_2&=\left[
\frac {-4\,{d}^{2}  x_1\, y_1^{2} - \left(  {d}^{4}+ \left( 6
\,{R}^{2}-4\,{r}^{2} \right) {d}^{2}+{R}^{4}-4\,{R}^{2}{r}^{2}
 \right) x_1+4\,{R}^{2}d ({d}^{2}+    {R}^{2}-2 r^{2}) }{3\, \left( {R}^{2}+{d}^{2}-2\,d x_1 \right) ^{2}},\right.\\
&\left.-\frac { \left( -4\,{d}^{2}x_1^{2}+8\,{d}^{3}x_1-3\,{d}^{
4}+ \left( -2\,{R}^{2}+4\,{r}^{2} \right) {d}^{2}+{R}^{4}-4\,{R}^{2}{r
}^{2} \right) y_1}{3\, \left( {R}^{2}+{d}^{2}-2\,d x_1
 \right) ^{2}}\right]\cdot
\end{align*}
}
Using the method of resultants, eliminate $x_1,y_1$ from the system $X_2-[x,y]=0,\; x_1^2+y_1^2-R^2=0$. Then $X_2$ will be given by an implicit polynomial equation $f(x,y)=0$ of degree 6.
Explicitly:
{\small
\[
\begin{array}{l}
729(x^2+y^2)^3-972dx(x^2+y^2)((4R^2+5d^2-4r^2)y^2+(4R^2-3d^2-4r^2)x^2)-\\
-81(x^2+y^2)((R^6-86R^4d^2-8R^4r^2+121R^2d^4+128R^2d^2r^2+16R^2r^4-36d^6-\\
-72d^4r^2-96d^2r^4)x^2+(R^6-42R^4d^2-8R^4r^2-63R^2d^4+128R^2d^2r^2+16R^2r^4-\\
-4d^6+24d^4r^2-32d^2r^4)y^2)+ 108dx((3R^8-45R^6d^2-28R^6r^2+81R^4d^4+44R^4d^2r^2+\\
+80R^4r^4-39R^2d^6+20R^2d^4r^2-112R^2d^2r^4-64R^2r^6-36d^6r^2+64d^2r^6)x^2+\\
+(3R^8-45R^6d^2-28R^6r^2+81R^4d^4+76R^4d^2r^2+80R^4r^4-39R^2d^6-44R^2d^4r^2+\\
+16R^2d^2r^4-64R^2r^6-4d^6r^2+64d^2r^6)y^2)-36d^2((9R^{10}-36R^8d^2-90R^8r^2+\\
+54R^6d^4-18R^6d^2r^2+312R^6r^4-36R^4d^6+306R^4d^4r^2-372R^4d^2r^4-480R^4r^6+\\
+9R^2d^8-198R^2d^6r^2+96R^2d^4r^4+256R^2d^2r^6+384R^2r^8-36d^6r^4+96d^4r^6-64d^2r^8)x^2+\\
(9R^{10}-36R^8d^2-102R^8r^2+54R^6d^4+126R^6d^2r^2+392R^6r^4-36R^4d^6+54R^4d^4r^2-\\
-116R^4d^2r^4-544R^4r^6+9R^2d^8-78R^2d^6r^2+160R^2d^4r^4+128R^2r^8-4d^6r^4+\\
+32d^4r^6-64d^2r^8)y^2)+48R^2d^3r^2(3R^2-3d^2+4r^2)(3R^6-9R^4d^2-28R^4r^2+\\
+9R^2d^4-4R^2d^2r^2+80R^2r^4-3d^6+32d^4r^2-32d^2r^4-64r^6)x-\\
-16R^2d^4r^4((R+d)^2-4r^2)((R-d)^2-4r^2)(3R^2-3d^2+4r^2)^2=0.
\end{array}
\]
}

\bibliographystyle{maa}
\bibliography{999_refs,999_refs_rgk,999_refs_rgk_media}

\end{document}